\documentclass[10pt]{article}

\usepackage{amsmath}
\usepackage{amsfonts}
\usepackage{amssymb}
\usepackage{framed}
\usepackage{mathtools}
\usepackage{enumerate}
\usepackage{mathrsfs}
\usepackage[hidelinks]{hyperref}
\usepackage{enumitem}
\usepackage{mathrsfs}
\usepackage{scrextend}
\usepackage{amsthm}
\usepackage{bbold}
\usepackage{bbm}
\usepackage{comment}

\usepackage[T1]{fontenc}
\usepackage{lmodern}

\usepackage{thmtools}
\usepackage{thm-restate}
\usepackage{mathabx}
\usepackage[margin=1.5in]{geometry}
\usepackage{rotating}

\usepackage[nameinlink]{cleveref}

\usepackage[utf8]{inputenc}
\usepackage[T1]{fontenc}

\usepackage{xcolor}
\hypersetup{
    colorlinks,
    linkcolor={red!50!black},
    citecolor={blue!50!black},
    urlcolor={blue!80!black}
}

\usepackage{tikz-cd}

\numberwithin{equation}{section}

\theoremstyle{plain}
\newtheorem{thm}{Theorem}[section]
\newtheorem{lem}[thm]{Lemma}
\newtheorem{prop}[thm]{Proposition}
\newtheorem{cor}[thm]{Corollary}
\newtheorem{rem}[thm]{Remark}

\theoremstyle{definition}
\newtheorem{defn}[thm]{Definition}

\theoremstyle{remark}

\usepackage[nottoc,numbib]{tocbibind}

\tikzset{
  symbol/.style={
    draw=none,
    every to/.append style={
      edge node={node [sloped, allow upside down, auto=false]{$#1$}}}
  },
    labl/.style={anchor=south, rotate=90, inner sep=.5mm}
}

\newcommand\restr[2]{{
	\left.\kern-\nulldelimiterspace
	#1
	\vphantom{\big|}
	\right|_{#2}
	}}

\newcommand{\ep}{\varepsilon}

\newcommand{\ch}[1]{\widecheck{{#1}}}

\newcommand{\codim}{\operatorname{codim}}

\newcommand{\End}{\operatorname{End}}

\usepackage{tikz-cd}
\usetikzlibrary{cd}
\usepackage{leftidx}
\usetikzlibrary{positioning}
\usepackage{dsfont}
\usepackage{tikz}
\usetikzlibrary{matrix,arrows,decorations.pathmorphing,positioning}

\newcommand{\GL}{\operatorname{GL}}

\usepackage{amsmath,calligra,mathrsfs}

\usepackage[cal=boondoxo]{mathalfa}

 \newcommand{\Addresses}{{
  \bigskip
  \footnotesize

\textsc{CNRS, IMJ-PRG, Sorbonne Universit\'{e}, 4 place Jussieu, 75005 Paris, France}\par\nopagebreak
  \textit{E-mail address}, G.~Baldi: \texttt{baldi@imj-prg.fr} 

  \medskip

\textsc{IAS, Institute for Advanced Study. 1 Einstein Dr, Princeton, 08540 New Jersey (United States)}\par\nopagebreak
  \textit{E-mail address}, D.~Urbanik: \texttt{urbanik@ihes.fr}
}}

\DeclareMathOperator{\sheafhom}{\mathcal{H \kern -1pt o \kern -2pt m}}
\DeclareMathOperator{\sheafper}{\mathcal{P \kern -1pt e \kern -2pt r}}
\DeclareMathOperator{\sheafiso}{\mathcal{I \kern -1pt s \kern -2pt o}}
\DeclareMathOperator{\sheafend}{\mathcal{E \kern -1pt n \kern -2pt d}}
\DeclareMathOperator{\sheafaut}{\mathcal{A \kern -1pt u \kern -2pt t}}

\newcommand{\C}{\mathbb{C}}
\tikzset{
  trim node/.default=1cm,
  trim node/.style={
    overlay,
    append after command={
      ([xshift={+#1}]\tikzlastnode.north west)
      ([xshift={+-#1}]\tikzlastnode.south east)}},
  down and trim/.default=1cm,
  down and trim/.style={
    yshift=-(\pgfmatrixcurrentcolumn-1)*1.5\baselineskip,
    trim node={#1}},
  downup and trim/.default=1cm,
  downup and trim/.style={
    yshift=iseven(\pgfmatrixcurrentcolumn) ? -1.5\baselineskip : 0pt,
    trim node={#1}},
  -|/.style={to path={-|(\tikztotarget)\tikztonodes}},
  |-/.style={to path={|-(\tikztotarget)\tikztonodes}},
  -| sl/.style={-|, xslant=-1},
  |- sl/.style={|-, xslant= 1},
  center picture/.style={
    trim left=(current bounding box.center),
    trim right=(current bounding box.center)}}

\newcommand{\N}{\mathbb{N}}
\newcommand{\Z}{\mathbb{Z}}
\newcommand{\R}{\mathbb{R}}
\newcommand{\V}{\mathbb{V}}

\usepackage{microtype}

\title{Intersections and the Bézout Range: Abelian Varieties}\date{\today}
\author{Gregorio Baldi and David Urbanik}

\begin{document}

\maketitle

\begin{abstract}
Given subvarieties $X, Y$ of a complex algebraic variety $S$ of complementary dimension, must they intersect? When $S$ is projective space, this is a consequence of the classical Bézout theorem, and an analogue for simple abelian varieties was established by Barth in 1968. Moreover, the moving lemma suggests that, after suitable translations, one may arrange for intersections of the expected dimension.

In this work, we obtain variants for simple abelian varieties in the spirit of the completed Zilber--Pink philosophy. When $X$ and $Y$ have complementary dimension, we show that the intersections $X \cap [n]Y$ are zero-dimensional for all but finitely many integers $n$, and that these intersections collectively give rise to an analytically dense subset of $X$ as $n$ varies. We moreover control those $n$ for which $X \cap [n] Y$ has a positive dimensional component uniformly in $X, Y$ and $A$. When $\dim X + \dim Y < \dim A$, we show that $X \cap [n]Y = \varnothing$ for a set of integers $n$ of asymptotic density one, except in the presence of intersections at torsion points.
\end{abstract}

\tableofcontents

\section{Introduction}

We start by recalling some general considerations about intersection theory, cf. for example the introduction of Fulton's book \cite{zbMATH01027930}.
\begin{defn}\label{meetproperly}
Let $S$ be a smooth complex variety and $X,Y\subset S$ be irreducible Zariski closed subvarieties.
We say that $X$ and $Y$ \emph{meet properly} (or intersect properly) in $S$ if every irreducible component $Z$ of $X\cap Y$ satisfies
\begin{equation}
  \dim Z \;=\; \dim X + \dim Y - \dim S,
\end{equation}
i.e., each such component has the \emph{expected dimension}. We write $(X\cap Y)^\text{e}$ for the union of the irreducible components of the expected dimension, and $(X\cap Y)^\text{ue}$ for the others, namely the one of \emph{unexpected} dimension (i.e. of dimension $> \dim X + \dim Y - \dim S$).
\end{defn}
We stress that meeting properly is, in general, a much more stringent condition than the mere existence of a component of $X \cap Y$ of the expected dimension.

\begin{rem}\label{rmkdim} Our convention is that $\dim \emptyset = -\infty$, and one considers $X \cap Y=\emptyset$ to be expected when $\dim X + \dim Y <  \dim S$.
\end{rem}

\noindent  We think of $S$ and $X$ as fixed, and study the distribution of the expected components of $X \cap Y$ as $Y$ varies in a suitable way. This point of view is closely related to a cornerstone of intersection theory:
\begin{thm}[Chow's Moving Lemma \cite{zbMATH03122792, zbMATH01027930}]  
\label{chowmovinglem}
Let $X, Y \subset S$ be closed subvarieties of a smooth quasi-projective variety $S$.  
Then there is an algebraic cycle $Y'$ on $S$, rationally equivalent to $Y$, such that $Y'$ and $X$ meet properly.  
\end{thm}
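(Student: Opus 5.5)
We follow the classical proof via projecting cones, as in Roberts' treatment and in Fulton, Example 11.4.1. The statement concerns cycles, so by linearity we may take $Y$ irreducible. We want a cycle rationally equivalent to $Y$ all of whose components meet $X$ properly.

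\textbf{Step 1: reduce to projective space.} Since $S$ is quasi-projective, embed $S \subset \mathbb{P}^N$ and take its closure $\bar S$. Closures and restriction behave well for rational equivalence, so it suffices to move the closure of $Y$ inside $\bar S$ and then restrict to $S$. Properness only needs to be checked on $S$, where $S$ is smooth.

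\textbf{Step 2: the cone construction.} Choose a general linear subspace $L \subset \mathbb{P}^N$ of dimension $N - \dim S - 1$, disjoint from $\bar S$. Let $C_L(Y)$ be the cone over $Y$ with vertex $L$. Then $C_L(Y)\cdot \bar S = Y + \sum_i a_i Y_i$ as cycles on $S$; this intersection is proper, since $C_L(Y)$ has dimension $\dim Y + N-\dim S$.

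Next move $L$ to a general $gL$ with $g \in \mathrm{PGL}_{N+1}$ in a rational family. The cones $C_{gL}(Y)$ are all rationally equivalent in $\mathbb{P}^N$, so
\[
Y \sim_{\mathrm{rat}} C_{L'}(Y)\cdot S - \sum_i a_i Y_i .
\]
Here $L'$ is a general linear subspace, and a Kleiman-type transversality argument shows that $C_{L'}(Y)\cdot S$ meets $X$ properly.

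\textbf{Step 3: induction on excess.} Write $e(Y)$ for the maximal excess $\dim Z - (\dim X + \dim Y - \dim S)$ over components $Z$ of $X\cap Y$. The key geometric lemma is that for general $L$, each residual component $Y_i$ satisfies $e(Y_i) \le \max(e(Y)-1, 0)$ with respect to $X$. Applying Step 2 repeatedly to the residual terms then lowers the excess one step at a time. After at most $\dim S$ steps every term meets $X$ properly, and summing gives $Y'$.

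\textbf{Main obstacle.} The hardest step is this excess-decreasing lemma. It rests on two facts:
\begin{itemize}
\item The residual components $Y_i$ are contained in the ramification locus of the linear projection $\pi_L\colon \bar S \to \mathbb{P}^{\dim S}$, restricted over $\pi_L(Y)$.
\item For a general center $L$, the secant and tangent directions of $\bar S$ along $Y\cap X$ meet $L$ only in the expected dimension.
\end{itemize}
I would prove this by a dimension count on the incidence variety of pairs $(L, \text{point})$, in the style of Chow's original argument. Smoothness of $S$ is used here, so that tangent spaces have constant dimension $\dim S$. A further delicate point is ensuring that the points at infinity, in $\bar S \setminus S$, do not contaminate the count. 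This is handled by performing the count only over $S$.
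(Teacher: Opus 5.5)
The paper does not prove this statement; it only cites Chow and Fulton. Your outline is the Roberts argument of Fulton's Example 11.4.1, but Step~2 as you wrote it does not actually move $Y$. You replace the vertex $L$ by a general $L'=gL$ and keep the base $Y$. Since $Y\subset \bar S$ and $Y\subset C_{L'}(Y)$ for \emph{every} $L'$, the cycle $C_{L'}(Y)\cdot S$ always contains $Y$ itself as a component, with multiplicity $1$ for general $L'$. So it can never meet $X$ properly unless $Y$ already does, and your identity $Y\sim C_{L'}(Y)\cdot S-\sum_i a_iY_i$ just reads $Y\sim Y+\sum_i a'_iY'_i-\sum_i a_iY_i$. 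Kleiman's theorem does not apply, because $\{C_{gL}(Y)\}_g$ is not a family of translates of one fixed subvariety of $\mathbb{P}^N$.

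The correct move is to translate the whole cone. For general $g\in\mathrm{PGL}_{N+1}$, the subvariety $g\cdot C_L(\bar Y)=C_{gL}(g\bar Y)$ moves vertex and base together, and by Kleiman it meets both $\bar S$ and $\bar X$ properly in $\mathbb{P}^N$. Hence every component of $(g\cdot C_L(\bar Y)\cdot\bar S)|_S$ has dimension $\dim Y$, and each component of its intersection with $X$ has dimension at most, hence (as $S$ is smooth) exactly, $\dim X+\dim Y-\dim S$. Since $A_k(\mathbb{P}^N)\cong\Z$, you have $g\cdot C_L(\bar Y)\sim C_L(\bar Y)$. This gives $Y\sim (g\cdot C_L(\bar Y)\cdot\bar S)|_S-\sum_i a_iY_i$, where the $Y_i$ are still the residuals for the original general vertex $L$, to which your excess lemma applies. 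You also need to say why intersecting with $\bar S$ preserves rational equivalence even though $\bar S$ may be singular; for instance, use the refined Gysin map of the diagonal of $\mathbb{P}^N$, which lands in $A_*(\bar S)$.

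Your sketch of the excess lemma also misidentifies the mechanism. The residual components are \emph{not} contained in the ramification locus of $\pi_L$. For a conic in $\mathbb{P}^2$, a point $y$ on it and a general point $L$, the second intersection point of the line through $L$ and $y$ is unramified. What is true is a dichotomy for $x\in Y_i\cap X$:
\begin{enumerate}
\item[(a)] some $y\in\bar Y$ with $y\neq x$ has $\pi_L(y)=\pi_L(x)$, so the line $\overline{xy}$ meets $L$; or
\item[(b)] $x\in Y\cap X$ and $\pi_L$ is ramified at $x$, i.e.\ $L$ meets the embedded tangent space $\mathbb{T}_x\bar S$. Otherwise $\pi_L$ is \'etale, hence locally injective, at $x$, and $\pi_L^{-1}\pi_L(\bar Y)$ coincides with $\bar Y$ near $x$.
\end{enumerate}
Locus (a) is bounded by an incidence count over pairs $(x,y)\in X\times\bar Y$, with $x$ not necessarily in $Y\cap X$ and $y$ possibly at infinity. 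Meeting a fixed line is a codimension-$\dim S$ condition on $L$, so this locus has dimension $\le\dim X+\dim Y-\dim S$. Locus (b) is, for general $L$, a proper closed subset of each component of $X\cap Y$: smoothness of $S$ gives $\dim\mathbb{T}_x\bar S=\dim S$, and meeting it is a divisorial condition on $L$. These two bounds together give $e(Y_i)\le\max(e(Y)-1,0)$.
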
  

We remark here that one cannot in general ensure that $Y'$ is effective. In this paper we investigate related properties for the case where $S$ is an abelian variety. We plan to consider the case where $S$ is a Shimura variety in a sequel. Before stating our main results, we recall what is known about the following problem: if $X, Y \subset S$ have dimensions adding up to at least $\operatorname{dim} S$, must they intersect?

\subsection{Motivations: B\'{e}zout for projective spaces and simple abelian varieties}

In the case $S=\mathbb{P}^n_\C$ (or in fact when $S=\mathbb{P}^n_k$ with $k$ an arbitrary field) one has the following classical theorem named after B\'ezout. (The case of curves was already known by I. Newton in 1687, cf. the proof of Lemma 28 in volume 1 of his \emph{Principia}.) For a general and modern perspective we refer to \cite[Thm. 12.3 and page 223]{zbMATH01027930}. 

\begin{thm}
\label{bezoutplus}
Let $X, Y \subset \mathbb{P}^{n}_{\C}$ be irreducible subvarieties.
\begin{enumerate}
\item[(1)] If $\dim X + \dim Y \geq n$, then $X\cap Y \neq \emptyset$.
\item[(2)] For a general translate $g \cdot Y$ with $g \in \operatorname{PGL}_{n+1}(\C)$ the varieties $X$ and $g\cdot Y$ meet properly.
\end{enumerate}
\end{thm}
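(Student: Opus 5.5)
For (1), I will reduce to the fact that a hypersurface in $\mathbb{P}^n_{\C}$ meets every closed subvariety of positive dimension, and then induct on dimensions using the join construction.

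First the hypersurface fact. If $V\subset\mathbb{P}^n$ has $\dim V\ge 1$ and $H=\{F=0\}$ misses $V$, then $V\subset \mathbb{P}^n\setminus H$. This complement is affine, since it is the image of a Veronese embedding minus a hyperplane section. A projective variety inside an affine variety is finite, so $\dim V=0$, a contradiction.

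For general $X,Y$, I will pass to the cones $C(X),C(Y)\subset \C^{n+1}$. Their intersection contains the origin and has every component of dimension at least
\[
(\dim X+1)+(\dim Y+1)-(n+1)\ge 1,
\]
by the affine dimension theorem (Krull's principal ideal theorem). That theorem is applied to the diagonal $\Delta\subset \C^{n+1}\times\C^{n+1}$, which is cut out by $n+1$ linear equations, intersected with $C(X)\times C(Y)$. So $C(X)\cap C(Y)$ has a component of positive dimension through $0$. It is conic, so it contains a nonzero vector, whose line gives a point of $X\cap Y$. The only real input is Krull's Hauptidealsatz, and this is the step I would state with care: the component must be counted through $0$, where the intersection is nonempty for free.

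For (2), I will use Kleiman's transversality argument for the transitive action of $G=\operatorname{PGL}_{n+1}(\C)$. Consider the incidence variety
\[
\Gamma=\{(g,x,y)\in G\times X\times Y : gy=x\}.
\]
Its projection $\Gamma\to X\times Y$ has fibres $\{g: gy=x\}$. These are cosets of stabilisers, all of dimension $\dim G-n$ by transitivity. Hence $\dim\Gamma=\dim X+\dim Y+\dim G-n$, and $\Gamma$ is irreducible, being a fibre bundle with irreducible fibres over an irreducible base.

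Next, project $\Gamma\to G$. The fibre over $g$ is identified with $X\cap gY$. If the projection is not dominant, a general fibre is empty, and for $\dim X+\dim Y<n$ this is the expected answer. Otherwise, by generic fibre dimension, there is a dense open set of $g$ over which every component of the fibre has dimension exactly $\dim\Gamma-\dim G=\dim X+\dim Y-n$. This is precisely the statement that $X$ and $gY$ meet properly for general $g$.

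The main subtlety is applying generic fibre dimension to every component rather than just the dimension of the fibre. I would handle it by decomposing $\Gamma$ into irreducible pieces (here it is already irreducible) and using upper semicontinuity together with Chevalley's theorem for the dominant morphism $\Gamma\to G$. Characteristic zero is not needed for this dimension statement.
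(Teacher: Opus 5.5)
Your proof is correct, but it does not follow the paper's route. The paper gives no argument for this classical statement; it only refers to Fulton's \emph{Intersection Theory} (the refined B\'ezout theorem, Thm.~12.3). There, nonemptiness and proper intersection come out of the intersection-product machinery on $\mathbb{P}^n$, together with degree information.

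You instead give the two standard elementary arguments:
\begin{itemize}
\item For (1), the affine cones meet at the origin. Krull's Hauptidealsatz applied to $(C(X)\times C(Y))\cap\Delta$ then forces every component, in particular one through $0$, to have dimension at least $\dim X+\dim Y+1-n\geq 1$.
\item For (2), you use Kleiman's incidence count $\dim\Gamma=\dim X+\dim Y+\dim G-n$, followed by generic fibre dimension for $\Gamma\to G$.
\end{itemize}

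What your route buys is self-containedness and generality. Argument (2) works verbatim for any transitive action of a connected algebraic group; it is exactly the Kleiman theorem the paper later invokes for translates on abelian varieties. It also needs no hypothesis on the characteristic.

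What it does not give is any degree information. For example, it says nothing about $\sum_j\deg Z_j\leq\deg X\cdot\deg Y$ over the components $Z_j$ of $X\cap Y$. Nor does it give equality with $\deg X\cdot\deg Y$ in the proper, complementary-dimension case mentioned right after the statement. That is what the Fulton reference supplies.

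Two small remarks.
\begin{itemize}
\item The hypersurface lemma and the announced induction via joins are never used; the cone argument stands on its own.
\item In (2), say explicitly that for $\dim X+\dim Y\geq n$, part (1) makes $\Gamma\to G$ surjective. For $\dim X+\dim Y<n$ one has $\dim\Gamma<\dim G$. So your ``non-dominant'' case occurs exactly in the range where emptiness is the expected answer.
\end{itemize}
For irreducibility of $\Gamma$: point stabilisers in $\operatorname{PGL}_{n+1}$ are parabolic, hence connected, and $G\to\mathbb{P}^n$ has Zariski-local sections, so $\Gamma\to X\times Y$ is locally trivial. In any case, equidimensionality of $\Gamma$ would suffice.
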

\noindent Moreover when $X$ and $Y$ meet properly the degree of the intersection is the product of the degrees, generalizing the fact that, if $\dim X + \dim Y=n$, the number of intersection points counted with multiplicity equals the product $\deg X \cdot \deg Y$.

The analogue of \autoref{bezoutplus}(1) is also known in the case $S=A$ a simple abelian variety thanks to the work of Barth \cite{zbMATH03256696}; simplified proofs that work for any field of characteristic zero were later given by Prasad \cite{zbMATH00495533} and Debarre \cite{zbMATH00807396}. More recently Debarre and Moonen \cite{2025arXiv250914940D} have obtained the same result over an arbitrary base field $k$ as well as well as a version, under an additional non-degeneracy assumption for $X$, in the non-simple case (cf. \S5 in \emph{op. cit.}).

\begin{thm}[Barth]\label{bezab}
Let $A$ be a simple complex abelian variety. If $\dim X+\dim Y \geq  \dim A$, then the subvarieties $X$ and $Y $ must meet: $X \cap Y \neq \emptyset$.
\end{thm}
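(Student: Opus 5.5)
We prove Barth's theorem via the difference map. Set $d=\dim X$, $e=\dim Y$ with $d+e\ge g=\dim A$, and consider
\[
\delta\colon X\times Y\to A,\qquad (x,y)\mapsto x-y .
\]
We have $X\cap Y\neq\emptyset$ exactly when $0\in\delta(X\times Y)=X-Y$. It therefore suffices to establish two facts: first, that $\delta$ is surjective, so that $X-Y=A$; second, that $0$ lies in the image, which is then automatic. Surjectivity reduces to the dimension statement $\dim(X-Y)=\min(g,\,d+e)$. The image $X-Y$ is closed because $X\times Y$ is projective.

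\textbf{Step 1 (dimension of sums in simple abelian varieties).} Let $W=X-Y$, which is irreducible. Suppose $\dim W<d+e$. Then every fibre of $\delta$ has positive dimension, so for a general $w\in W$ the set $X\cap(Y+w)$ is positive-dimensional. The goal is to show that this forces $W$ to be stable under translation by a positive-dimensional subgroup. Since $A$ is simple, that subgroup must be $A$ itself, giving $W=A$.

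The standard way to do this is to compare tangent spaces. By generic smoothness, at a general point $(x,y)$ the differential of $\delta$ has image $T_xX+T_yY$, viewed inside $T_0A\cong\C^g$ using translation-invariant trivialisation. The image of this differential is the tangent space $T_wW$. For fixed $y$, as $x$ moves in a positive-dimensional fibre direction, the space $T_xX+T_yY$ stays equal to $T_wW$. The Gauss map argument then applies. Let $V$ be the subspace of $T_0A$ stabilising $W$ infinitesimally, i.e.\ contained in $T_{w'}W$ for all $w'\in W$, and let $B$ be the connected component of the stabiliser $\{a : a+W=W\}$. One shows that if $W\neq A$ then $T_wW$ is constant along $x+T_yY$-type families. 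From this, $W$ is invariant under an abelian subvariety generated by the directions of translates of $Y$ contained in $W$. Concretely, $W+ (y'-y)\subset W$ for $y,y'\in Y$, which forces the abelian subvariety generated by $Y-Y$ to stabilise $W$ once these directions are controlled.

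\textbf{Step 2 (the cleaner route I would actually use).} A cleaner way to run this is by induction, using the classical lemma for simple $A$: if $Z\subsetneq A$ is a proper irreducible subvariety and $V\subset A$ is a positive-dimensional irreducible subvariety, then $\dim(Z+V)>\dim Z$. The proof of the lemma is short. If $\dim(Z+V)=\dim Z$, then $Z+v$ is a component of $Z+V$ for every $v\in V$, hence equal to $Z+v_0$ for all $v$. So $Z$ is stable under translation by $V-v_0$, and therefore under the abelian subvariety that $V-v_0$ generates. That abelian subvariety is positive-dimensional, so by simplicity it is all of $A$, contradicting $Z\neq A$. Now filter $Y$ by a chain of irreducible subvarieties $\{y_0\}=Y_0\subset Y_1\subset\cdots\subset Y_e=Y$ with $\dim Y_i=i$. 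Applying the lemma repeatedly to $X-Y_i$ shows that $\dim(X-Y_i)\ge\min(g,d+i)$. Taking $i=e$ gives $X-Y=A$, and hence $0\in X-Y$.

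\textbf{Main obstacle.} The delicate step is the lemma's claim that $Z+v=Z+v_0$ for all $v$. This uses irreducibility of $Z+V$, which holds because it is the image of the irreducible variety $Z\times V$, together with equality of dimensions. From these we get $Z+V=Z+v$ for each $v$, so $Z$ is stable under $V-V$ and hence under the abelian subvariety generated by $V-V$, which is positive-dimensional. Once this stabiliser argument is in place, simplicity of $A$ closes the proof, and no cohomological input is needed.
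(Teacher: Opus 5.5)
The paper does not prove this statement; it quotes it from Barth, Prasad, Debarre and Debarre--Moonen. So your argument has to stand on its own, and as written it has a gap.

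\textbf{What is correct.} The reduction is sound: $X\cap Y\neq\varnothing$ iff $0\in X-Y$, and $X-Y$ is closed and irreducible. Showing $X-Y=A$ is in fact equivalent to Barth's theorem for all translates $Y+a$. So all the content lies in the dimension claim $\dim(X-Y)\ge\min(g,d+e)$. Your stabiliser lemma ($\dim(Z+V)>\dim Z$ for $Z\subsetneq A$ and $\dim V>0$) is also correct.

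\textbf{The gap: the induction in Step~2.} The lemma only controls sets of the form $Z+V$. But $X-Y_{i+1}$ contains $(X-Y_i)+V$ with $\dim V>0$ only if $Y_{i+1}\supset Y_i-V$. Along your chain this forces $Y$ to be, up to translation, a sum of curves $C_1+\cdots+C_e$, which a general subvariety is not. If instead you only assume $\dim(X-Y_{i+1})=\dim(X-Y_i)$, you get $X-Y_{i+1}=X-Y_i$. No stabiliser can be extracted from that, because the pieces $X-y$ now have strictly smaller dimension than the whole.

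What the lemma actually gives, when $d,e>0$ and $X-Y\neq A$, is $\dim(X-Y)\ge\max(d,e)+1$. This proves the theorem when $\min(d,e)\le 1$. For two surfaces in a simple abelian fourfold, however, it only gives $\dim(X-Y)\ge 3$, not $4$.

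Step~1 does not fill the gap. The assertion $W+(y'-y)\subset W$ is unjustified; it would give $W=A$ at once. The observation that tangent spaces are constant along fibre directions is never turned into a contradiction.

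\textbf{What is missing.} You need a real proof that every subvariety of a simple abelian variety is nondegenerate, i.e.\ $\dim(X+Y)=\min(\dim X+\dim Y,g)$. This requires input beyond the stabiliser lemma. One way is to complete your Step~1 as follows.
\begin{enumerate}
\item Suppose $W=X-Y\neq A$ and $\dim W<d+e$. Take $(x_0,y_0)$ general, set $w=x_0-y_0$, and let $F$ be the component of $\delta^{-1}(w)$ through $(x_0,y_0)$. It is positive dimensional, and its projection to $X$ is injective.
\item Generic smoothness gives a dense open $\Omega\subset X\times Y$ on which $T_{x-y}W=T_xX+T_yY$ inside $T_0A$. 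Since $(x_0,y_0)\in\Omega$, this gives $T_xX\subset T_wW$ for general $(x,y)\in F$, and also $T_{y_0}Y\subset T_wW$.
\item Hence $T_{x-y_0}W=T_xX+T_{y_0}Y=T_wW$ for $x$ in a dense open subset of the projection of $F$ to $X$. So the Gauss map of $W$ has a positive-dimensional fibre through a general point.
\item Now invoke the classical theorem (Griffiths--Harris, Ran; cf.\ Ueno) that a subvariety of an abelian variety whose Gauss map is not generically finite is stable under a positive-dimensional abelian subvariety. By simplicity this forces $W=A$, a contradiction.
\end{enumerate}
The induction as written supplies nothing of this kind.
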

Of course, the above conclusion cannot be strengthened to assert that $X$ and $Y$ meet properly, since, for instance, $X$ might be contained in $Y$. Nevertheless, in analogy with \Cref{bezoutplus}(2) and \Cref{chowmovinglem}, one may ask whether there exists a natural way to ``move'' $Y$ so that one obtains a proper intersection. On an abelian variety $A$, one can naturally translate by points $a \in A$, replacing $X \cap Y$ with $X \cap (Y + a)$. In this case the problem, as well as its generalization to an arbitrary algebraic group, is fully understood as a consequence of Kleiman's Theorem \cite{zbMATH03451984}. Instead, our main interest lies in acting via \emph{group endomorphisms} of $A$, whose behaviour is typically more chaotic. We also plan a subsequent paper with $S$ a Shimura variety, where instead of endomorphisms of $S$ one uses Hecke correspondences.

\subsection{Main results}

Our results come in three flavours:
\begin{itemize}
\item[-] those which apply when $\dim X + \dim Y \geq \dim A$, i.e., when we are ``inside the B\'ezout range'' (see \Cref{mainthmB}); 
\item[-] those which apply when $\dim X + \dim Y < \dim A$; i.e., when we are ``outside the B\'ezout range'' (see \Cref{densityofnonintthm}); and
\item[-] general considerations about uniformity in the setting of the so called \emph{geometric Zilber--Pink conjecture} (see \Cref{uniformityinabeliansetting} and \Cref{rhounigeoZP}).
\end{itemize}
We now describe them in detail. (We work over the complex numbers and omit the field $k$ from the notation.)
\subsubsection{Inside the B\'ezout Range}

Our first main result establishes a version of \autoref{bezoutplus}(2) for abelian varieties, describing the behaviour of the expected intersections between $X$ and the collection $\{ [n] Y \}_{n \in \N}$.
\begin{thm}\label{thmA}
Let $A$ be a simple abelian variety and $X,Y$ irreducible Zariski closed subvarieties such that $\dim X+\dim Y \geq  \dim A$. Then for all but finitely many $n \in \Z \subset \operatorname{End} (A)$, $X$ and $[n]Y$ meet properly, in the sense of \Cref{meetproperly}. 
\end{thm}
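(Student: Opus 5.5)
The plan is to recast the problem as an intersection question in $A\times A$ and then use a weak (geometric) Zilber--Pink statement for abelian varieties, which follows from Ax--Schanuel. Let $g=\dim A$ and $d=\dim X+\dim Y-g\geq 0$. Let $\Gamma_n\subset A\times A$ be the graph of $[n]$, so $\Gamma_n=\{(n y,y)\}$, an abelian subvariety of dimension $g$. The projection $(x,y)\mapsto x$ sends $(X\times Y)\cap\Gamma_n$ onto $X\cap[n]Y$. Its fibres are finite, since $\ker[n]$ is finite for $n\neq 0$. A component of $X\cap[n]Y$ of unexpected dimension therefore gives a component $W$ of $(X\times Y)\cap\Gamma_n$ with
\[
\dim W>\dim(X\times Y)+\dim\Gamma_n-2g=d .
\]
So $W$ is an atypical intersection of the fixed subvariety $V:=X\times Y$ with a special subvariety of $A\times A$.

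Step one is to invoke the weak Zilber--Pink theorem for $V\subset A\times A$, in the form proved by Bombieri--Masser--Zannier, Kirby, Gao, or Habegger--Pila via Ax--Schanuel. It provides a \emph{finite} set $\mathcal{B}$ of abelian subvarieties of $A\times A$, depending only on $V$, with the following property. Every atypical component $W$ of $V\cap\Gamma$, for $\Gamma$ any abelian subvariety, is contained in some coset $B+t$ with $B\in\mathcal{B}$. Moreover $W$ is a \emph{typical} component of $(V\cap(B+t))\cap\Gamma$ inside $B+t$, which means
\[
\dim W=\dim\bigl(V\cap(B+t)\bigr)+\dim(\Gamma\cap B)-\dim B .
\]
The case $B=A\times A$ is the typical case, which is excluded because $W$ is atypical. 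The precise formulation, with the finiteness of $\mathcal{B}$ independent of $\Gamma$ and the dimension identity for the connected component of $\Gamma\cap B$, is the step I expect to need the most care. If necessary I would rederive it from Ax--Schanuel using a definable family of cosets.

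Step two uses simplicity of $A$. Every abelian subvariety of $A\times A$ other than $0$ and $A\times A$ is isogenous to $A$ and is the image of a map $(\alpha,\beta):A\to A\times A$ with $\alpha,\beta\in\operatorname{End}(A)\otimes\mathbb{Q}$. Then $\Gamma_n\cap B$ is positive dimensional only if $B=\Gamma_n$, that is, only if $\beta$ and $n\alpha$ agree up to isogeny. Since $\mathcal{B}$ is finite, for all but finitely many $n$ the intersection $\Gamma_n\cap B$ is finite for every proper $B\in\mathcal{B}$. For such $n$ and any $B\in\mathcal{B}$ with $B\neq A\times A$, the identity above gives
\[
\dim W=\dim\bigl(V\cap(B+t)\bigr)-\dim B\leq 0 .
\]

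Step three derives the contradiction. We have $\dim W>d\geq 0$, while step two gives $\dim W\leq 0$ for every admissible $B$, and the case $B=A\times A$ is excluded because $W$ is atypical. Hence no such $W$ exists for all but finitely many $n$, which proves \Cref{thmA}. We also need that $X\cap[n]Y$ is nonempty, so that meeting properly is not vacuous; this is \Cref{bezab}. The main obstacle is step one: getting a uniform finite list $\mathcal{B}$ that works for all the $\Gamma_n$ simultaneously, with the correct typicality identity.
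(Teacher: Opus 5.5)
Your proposal is correct and follows essentially the paper's route. Like the paper, you pass to $X\times Y\subset A\times A$ and the graphs $\Gamma_n$, use geometric Zilber--Pink (\Cref{geoZPAA}) to confine every atypical component to a coset of one of finitely many proper abelian subvarieties, and then use simplicity, exactly as in \Cref{intersectionlamb}, to see that $\Gamma_n$ meets such a coset in a finite set unless $\Gamma_n$ is that subvariety. Your typicality identity does hold once $V\cap(B+t)$ is read as the component containing $W$, and it follows from the weakly-optimal structure theorem; it is more than you need, since containment in a proper coset together with $\dim W>0$ already suffices. Its real use is that it rules out $B=A\times A$ directly, where the paper instead checks in \Cref{hodgeab} that $Y\times X$ lies in no strict weakly special subvariety.
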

\noindent Note that the simplicity assumption is required for the statement, as can be seen by considering a product situation $A = A' \times A''$ with $\dim A' \geq \dim A''$ and taking both $X$ and $Y$ to be translates of $A' \times \{ 0 \}$.

\begin{rem}
The above is interesting also in the special case where $X=Y$. Indeed this is the setting appearing in Hindry's method \cite{zbMATH04040085} for proving Manin-Mumford (as well as the related Edixhoven-Yafaev approach to the André-Oort conjecture under GRH). In such cases one wishes to intersect a subvariety $X$ with a translate $[n]X$, observe that torsion points are in $X \cap [n]X$, and then argue by comparing the degree bound from Bézout to the size of Galois orbits. 
\end{rem}

The idea of the proof of the above statement is to relate the problem to the so-called ``completed Zilber–Pink philosophy'' for subvarieties of abelian varieties (cf. \cite{MR3552014}, the recent monographs \cite{zbMATH06022187, zbMATH07516542}, and our earlier work \cite{2024arXiv240616628B} for the most general case). This perspective allows us to reformulate the question as a problem of likely and unlikely intersections inside the product $A \times A$. One is then led to analyze both the expected and the unexpected intersections between $Y \times X$ and the \emph{special subvarieties of $A \times A$}—namely, torsion translates of subabelian varieties of $A \times A$—a class that includes, in particular, the graphs of endomorphisms of $A$ (and especially those of $[n] : A \to A$, denoted by $\Gamma_n$).

Using this reformulation we also discuss several refinements of the completed Zilber–Pink philosophy which, to our knowledge, have not been made explicit before. Namely those motivated by the following two questions:
\begin{itemize}
\item To what extent is the above finiteness uniform?
\item Are unions of the form
\begin{displaymath}
\bigcup_{i } (X \cap [n_{i}] Y)^{\operatorname{e}},
\end{displaymath}
with $(n_{i})_{i \in \N}$ some infinite sequence of endomorphisms, (analytically or Zariski) dense in $X$?
\end{itemize}

\noindent In this direction, we prove the following result. Below we denote by $\chi (\mathcal{L})=\dim H^0(A,\mathcal{L})$ the degree of the polarization.

\begin{thm}\label{mainthmB}
Let $(A,\mathcal{L})$ be a simple polarized abelian variety and $X,Y$ irreducible subvarieties such that $\dim X+\dim Y \geq  \dim A$. \begin{enumerate}
\item[(1)] There exists a constant
\begin{equation}
C=C(\deg_\mathcal{L} (X), \deg_\mathcal{L} (Y), \chi (\mathcal{L}), \dim A)
\end{equation}
such that for all but at most $C$ elements $\lambda \in  \operatorname{End} (A)$, $X$ and $\lambda Y$ meet properly.
\item[(2)] For any infinite subset $\{ n_{i} \}_{i \in \N} \subset \mathbb{Z} \subset \operatorname{End}(A)$, the set
\begin{displaymath}
\bigcup_{i \in \N  } (X \cap [n_{i}] Y)^{\operatorname{e}}
\end{displaymath}
is analytically dense in $X$.
\end{enumerate}
\end{thm}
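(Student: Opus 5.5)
The plan is to translate both parts into statements about intersections of $Z := Y \times X \subset A \times A$ with graphs of endomorphisms. The key identity is that $X \cap \lambda Y$ is the image under the second projection of $Z \cap \Gamma_\lambda$, where $\Gamma_\lambda = \{(y,\lambda y)\}$. Here $\dim \Gamma_\lambda = \dim A$ and $\dim Z = \dim X + \dim Y$, so the expected dimension of $Z \cap \Gamma_\lambda$ is $\dim X + \dim Y - \dim A$. A component of $X \cap \lambda Y$ of unexpected dimension forces a component $W \subset Z \cap \Gamma_\lambda$ of unexpected dimension, because the fibres of $Z\cap\Gamma_\lambda \to X\cap \lambda Y$ are fibres of $\lambda|_Y$ and only increase the dimension. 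Such a $W$ is an atypical intersection of $Z$ with a special subvariety of $A\times A$.

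For part (1) I would use the geometric (Ax--Schanuel based) Zilber--Pink structure theorem for $Z \subset A \times A$, in the uniform form announced in \Cref{uniformityinabeliansetting}. It gives finitely many abelian subvarieties $B_1,\dots,B_k \subseteq A\times A$ with $k$ bounded in terms of $\deg Z$, $\chi(\mathcal{L})$ and $\dim A$, and in particular independent of $\lambda$. Every atypical component $W$ of $Z\cap \Gamma_\lambda$ is then contained in a weakly special translate $c+B_j$, meets $\Gamma_\lambda$ atypically, and is typical inside $c+B_j$ in the appropriate sense. Note that $\deg Z$ is controlled by $\deg_{\mathcal L}X$ and $\deg_{\mathcal L}Y$.

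The heart of the matter is to show that each $B_j$ accounts for at most boundedly many $\lambda$. Since $A$ is simple, the abelian subvarieties of $A\times A$ are $0$, $A\times A$, and the images of $A$ under $(\alpha,\beta)$ with $\alpha,\beta$ in the division algebra $\operatorname{End}^0(A)$. The atypicality condition $\dim W > \dim(Z\cap(c+B_j)) + \dim(\Gamma_\lambda\cap B_j) - \dim B_j$ means that $B_j \cap \Gamma_\lambda$ is unusually large relative to $B_j$. When $B_j$ is a proper nonzero subvariety, this forces $B_j \subseteq \Gamma_\lambda$ up to isogeny, so $B_j=\Gamma_\lambda$ and $\lambda$ is uniquely determined by $B_j$. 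The remaining case is $B_j$ a point or all of $A\times A$. For all of $A\times A$, atypicality with $\Gamma_\lambda$ itself is exactly what we are trying to rule out. To handle it, I would use Barth's theorem (\Cref{bezab}) together with the projection argument to show that a positive-dimensional unexpected component $W\subset \Gamma_\lambda$ has a stabiliser or translate structure that must come from a proper $B_j$. This is where I expect the main difficulty. My first attempt would be to apply the structure theorem to the family $\{\Gamma_\lambda\}$ viewed as special subvarieties and to use simplicity to exclude positive-dimensional atypical pieces not explained by some $\Gamma_\lambda$ lying in a proper special subvariety. The bound is then $C \le k$, up to multiplicity.

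For part (2), fix a nonempty open ball $U\subset X$ in the analytic topology; we must find expected intersection points in $U$ for infinitely many $n_i$. Lift to the universal cover $\C^g \to A$ with $g=\dim A$. Choose a point $x\in U$ that is smooth on $X$, and a point $y$ on the smooth locus of $Y$ such that $T_yY + T_xX = \C^g$ in the appropriate transverse position; this is possible by simplicity, since $X$ and $Y$ are not stable under translations by positive-dimensional abelian subvarieties. The curves $n\cdot$(local branch of $Y$) rescale: $[n]Y$ near $x$ contains, modulo the lattice, the sets $nY_{\text{loc}}$. As $n\to\infty$, these become equidistributed in the family of translates $a+\text{(germ of } Y)$, because multiplying a small open set of $Y$ by $n$ covers it densely in $A$ by Weyl equidistribution, using that $Y$ generates $A$. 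Hence for large $n$ some translate of the germ of $Y$ passing through a point near $x$ meets the germ of $X$ transversally, giving a point of $(X\cap[n]Y)^{\text e}$ in $U$; transversality implies the component has expected dimension. Finally, part (1) removes the finitely many bad $n$, which does not affect density.
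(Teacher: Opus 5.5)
Your plan has the paper's architecture: pass to $Z=Y\times X$ and the graphs $\Gamma_\lambda$, then use geometric Zilber--Pink plus simplicity for (1), and lift to $\mathbb{C}^g$ with equidistribution and transversality for (2). However, (1) stops exactly where the argument has to be closed, and the key local step of (2) rests on an incorrect picture.

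\textbf{Part (1).} The case you flag as the main difficulty, $B_j=A\times A$, never occurs. The missing ingredient is what rules it out: $Y\times X$ lies in no strict weakly special subvariety of $A\times A$ (\Cref{hodgeab}). If $X$ or $Y$ is a point, the other is $A$ and there is nothing to prove.

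To see this, note that for $A$ simple the strict positive-dimensional weakly specials are translates of $(\beta,\beta')(A)$. Each of these is finite over at least one factor, while $Y\times X$ has positive-dimensional fibres over both. This non-degeneracy is precisely the hypothesis under which \Cref{geoZPAA} places every unexpected component in a \emph{strict} weakly special from finitely many families.

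Two observations then finish the argument, with no need for Barth's theorem, stabilisers, or your relative atypicality inequality:
\begin{itemize}
\item An unexpected component $W$ of $Z\cap\Gamma_\lambda$ is positive-dimensional, because the expected dimension $\dim X+\dim Y-\dim A$ is $\geq 0$. This also disposes of $B_j=0$.
\item Since $\Gamma_\lambda\cong A$ is simple, a strict weakly special $c+B_j$ meets $\Gamma_\lambda$ in a finite set unless $c+B_j=\Gamma_\lambda$ (\Cref{intersectionlamb}).
\end{itemize}
Hence $W\subseteq (c+B_j)\cap\Gamma_\lambda$ forces $B_j=\Gamma_\lambda$, and each $B_j$ accounts for at most one $\lambda$.

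For the uniform constant, \Cref{uniformityinabeliansetting} gives finitely many \emph{families}, not directly boundedly many abelian subvarieties. The paper applies \Cref{rhounigeoZP} over the moduli of abelian varieties with a polarization of degree $\chi(\mathcal{L})$, together with the family of pairs $(Y,X)$ of bounded degree. It stratifies so that each family is a topological fibration. It then uses that all fibres of one family induce the same subgroup of the abelian group $\pi_1(A\times A)$, which by simplicity determines the abelian subvariety.

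\textbf{Part (2).} Near $x$, $[n]Y$ does not consist of translates of the germ of $Y$. The sheet through $ny+a$ is the $n$-fold dilation of the germ of $Y$ at $y$. On a disc of radius comparable to $1/n$ about $y$, the map $[n](\cdot)+a$ has fixed first-order part and second-order part of size $O(1/n)$. So these sheets converge in $C^1$ to a disc of fixed size in an affine space parallel to $T_yY$ (\Cref{tausigmaseqconstrlem}).

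This flattening, with its uniform $C^1$ control, is the missing step. Combined with stability of transverse intersections under $C^1$-small perturbations, it converts ``a sheet passes near $x$ with tangent space transverse to $T_xX$'' into an actual intersection point near $x$. You assert that conclusion from an incorrect local description.

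The paper brings a sheet near $x$ by fixing a single $y$ that is generic for Weyl's metric theorem and satisfies the open condition $T_yY+T_xX=\mathbb{C}^g$. Then $n_iy+a_i\to x$ along a subsequence (\Cref{findniailem}). Your variant, with varying points $y'$ in a small transverse open subset of $Y$, can also be made to work, but only once the flattening estimate is proved uniformly in $y'$.

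Two parts of your argument are fine as they stand: the choice of transverse $(x,y)$ via simplicity, and the final use of (1) to discard the finitely many $n_i$ with unexpected components. The latter is also what handles $\dim X+\dim Y>\dim A$.
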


\begin{rem}
Instead of considering the subvarieties $[\lambda] Y$ in \autoref{mainthmB}, one could also work with $[\lambda]^{-1} Y$; the same statements also hold in that case. The latter point of view is perhaps more similar to the Shimura setting where one works with true correspondences rather than functions. However since the density statements are stronger when one works with $[\lambda] Y$, and arguably more natural, we have chosen the former version.
\end{rem}
The proof of the above theorem employs a variety of techniques. The uniformity statement appearing in part (1) builds on a generalization of a theorem of Bogomolov \cite[Thm. 1, p. 58]{zbMATH03730302}. In fact, it will follow from a much more general result, \Cref{rhounigeoZP}, which makes the geometric Zilber–Pink theorem established in \cite[Thm. 7.1]{2024arXiv240616628B} uniform (see indeed \Cref{secunif} for more general results, which might be of independent interest). By contrast, the proof of part (2), which is given in \Cref{equidi}, relies on more analytic techniques inspired by equidistribution results in the tangent bundle. 

\subsubsection{Outside the B\'ezout Range}
\label{introoutsidesec}

If $\dim X + \dim Y < \dim A$, it still makes sense to ask if $X$ and $Y$ meet properly, which, as explained in \Cref{rmkdim}, happens if and only if $X \cap Y = \varnothing$. Note however that the statements appearing in \autoref{thmA} and \autoref{mainthmB} are all false in this case. For instance, if $X$ and $Y$ both contain the identity $0 \in A$, then $X$ and $[n] Y$ always intersect at $0$. More generally if $X$ and $Y$ contain torsion points $t$ and $t'$ which generate the same cyclic subgroup, then $X$ and $[n] Y$ will intersect for $n$ lying in some subset of $\N$ of positive density. 

To understand what is happening we recall the following result of Raynaud \cite{zbMATH03929174}, often referred to as the Manin-Mumford conjecture. Nowadays such a theorem has many proofs, and we refer to the one of Pila-Zannier \cite{zbMATH05292756} where various ideas related to the ones used in \Cref{densityofnonintthm} appeared for the first time. 
\begin{thm}[{Raynaud \cite{zbMATH03929174}}]
Let $A$ be a simple abelian variety and $Z \subset A$ a strict algebraic subvariety. Then $Z$ contains finitely many torsion points.
\end{thm}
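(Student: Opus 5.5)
We follow the o-minimal strategy of Pila--Zannier \cite{zbMATH05292756}. It compares a polynomial lower bound for Galois orbits of torsion points with the subpolynomial upper bound of Pila--Wilkie for rational points on transcendental definable sets. The Ax--Lindemann step, which rules out semialgebraic pieces, is where simplicity of $A$ enters.

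\emph{Step 0: reduction to $\overline{\mathbb{Q}}$.} Let $Z' \subset Z$ be the Zariski closure of the torsion points of $Z$; it suffices to show $Z'$ is finite. If $A$ is defined over a finitely generated field, then $Z'$ is stable under $\operatorname{Aut}(\C/K_0)$ for a suitable $K_0$. This is because automorphisms permute torsion points and a field of definition of $Z$ can be fixed. A standard specialization argument then reduces to the case where $A$ and $Z$ are defined over a number field $K$. From now on we assume this, and that $Z$ is irreducible with $\dim Z > 0$ (otherwise there is nothing to prove).

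\emph{Step 1: definable setup and point counting.} Write $\pi : \C^g \to A = \C^g/\Lambda$ for the uniformization. Identify $\C^g \cong \R^{2g}$ using a $\Z$-basis of $\Lambda$, and let $F = [0,1)^{2g}$ be the fundamental domain. Since $\pi$ restricted to a neighbourhood of $\overline{F}$ is restricted analytic, the set $\widetilde{Z} := \pi^{-1}(Z) \cap F$ is definable in $\R_{\mathrm{an}}$. A torsion point of exact order $N$ in $Z$ corresponds to a point of $\widetilde{Z}$ with rational coordinates of denominator $N$, hence of height at most $N$. The Pila--Wilkie theorem then says: for every $\varepsilon > 0$, the number of such points of height $\le T$ lying outside the algebraic part $\widetilde{Z}^{\mathrm{alg}}$ is $\ll_\varepsilon T^{\varepsilon}$. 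Here $\widetilde{Z}^{\mathrm{alg}}$ is the union of all connected positive-dimensional semialgebraic subsets of $\widetilde{Z}$.

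\emph{Step 2: Galois lower bound.} If $t \in Z$ is torsion of exact order $N$, then all its $\operatorname{Gal}(\overline{K}/K)$-conjugates are torsion points of $Z$ of exact order $N$. By Masser's bound (or David's refinement) there are $c, \delta > 0$, depending on $A$ and $K$, with $[K(t):K] \ge c N^{\delta}$. So if $\widetilde{Z}^{\mathrm{alg}}$ is empty, Step 1 with $\varepsilon < \delta$ gives $cN^\delta \le C_\varepsilon N^{\varepsilon}$. This bounds $N$, and there are only finitely many torsion points of bounded order, which concludes the proof.

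\emph{Step 3: Ax--Lindemann (main obstacle).} It remains to show $\widetilde{Z}^{\mathrm{alg}} = \varnothing$. Suppose $\pi^{-1}(Z)$ contains a real semialgebraic arc. The Ax--Lindemann--Weierstrass theorem for abelian varieties, which can be deduced from Ax's theorem on the functional Schanuel statement for $\exp_A$, says the following: a maximal irreducible complex-algebraic subset of $\pi^{-1}(Z)$ is a translate of a complex linear subspace $W$ such that $\pi(W)$ is (the translate of) an abelian subvariety $B$ with $B + a \subset Z$. To pass from real semialgebraic arcs to complex algebraic sets, take the complex Zariski closure of the arc; it stays inside $\pi^{-1}(Z)$ by analytic continuation. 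This yields a positive-dimensional $B$. Since $A$ is simple, $B = A$, which forces $Z = A$, contradicting strictness. We expect this step to be the main obstacle. The points requiring care are passing from a real semialgebraic set to a complex algebraic one, and invoking the Ax--Lindemann statement precisely. We would quote Ax's theorem rather than reprove it.
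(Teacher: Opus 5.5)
The paper gives no proof of this statement; it quotes Raynaud and refers to Pila--Zannier. Your Steps 1--3 are exactly that o-minimal argument, and over $\mathbb{C}$ they are sound as written: complexifying a semialgebraic arc gives a complex algebraic curve, which lies in $\pi^{-1}(Z)$ by the identity theorem, and then Ax's theorem plus simplicity forces $Z=A$.

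\textbf{The gap is in Step 0}, specifically in how it interacts with Step 3. Spreading $A$ and $Z$ out over a base $V/\overline{\mathbb{Q}}$ and specializing to a point $v\in V(\overline{\mathbb{Q}})$ does not preserve simplicity in general. For instance, for a very general principally polarized abelian surface, $V$ dominates $\mathcal{A}_2$. The non-simple locus of $\mathcal{A}_2$ (the union of the Humbert surfaces) is dense, so many fibres $\mathcal{A}_v$ are not simple. For such $v$, Ax--Lindemann only gives a coset $a+B\subset\mathcal{Z}_v$ with $B$ a possibly proper abelian subvariety. The algebraic part of the lifted set can then be non-empty, and Steps 2--3 no longer conclude. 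So the ``standard specialization argument'' as you state it does not land you in a setting where the rest of your proof applies.

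The simplest repair is to skip the reduction. Steps 1 and 3 never use that $A$ is defined over a number field: $\pi$ near $\overline{F}$ is definable in $\mathbb{R}_{\mathrm{an}}$ for any complex abelian variety, and Ax's theorem holds over $\mathbb{C}$. So keep $A$ and $Z$ over a finitely generated field of definition $K_0$, which ensures that $\operatorname{Gal}(\overline{K_0}/K_0)$-conjugates of torsion points of $Z$ stay in $Z$. Then transfer only the Galois bound, as follows.
\begin{enumerate}
\item Take an abelian scheme $\mathcal{A}\to V$ over a smooth connected variety with function field $K_0$, and a closed point $v$ whose residue field $k(v)$ is a number field.
\item Since $\mathcal{A}[N]$ is finite \'etale over $V$, specialization gives a group isomorphism $A[N]\cong\mathcal{A}_v[N]$.
\item Under this isomorphism, $\operatorname{Gal}(\overline{k(v)}/k(v))$ acts through a subgroup of $\pi_1^{\mathrm{et}}(V)$, onto which $\operatorname{Gal}(\overline{K_0}/K_0)$ surjects.
\item Hence $[K_0(t):K_0]\ge[k(v)(t_v):k(v)]\ge cN^{\delta}$, by Masser's bound for the fixed pair $\mathcal{A}_v/k(v)$.
\end{enumerate}
Alternatively, you can keep your reduction but choose $v$ with $\mathcal{A}_v$ simple and $\mathcal{Z}_v\neq\mathcal{A}_v$. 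This needs a specialization theorem for endomorphism rings (e.g.\ Masser's or Noot's), together with injectivity of specialization on torsion, which sends torsion points of $Z$ into $\mathcal{Z}_v$. That is a genuine extra input and must be cited. With either fix, the rest of your argument goes through.
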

It follows from the above that $Y \times X\subset A \times A$ contains finitely many pairs $\{ (t_{i}, t'_{i}) \}^{m}_{i=1}$ with $t_{i}$ (resp. $t'_{i}$) a torsion point of $A$ contained in $Y$ (resp. $X$). We observe that
\[ (t_{i}, t'_{i}) \in \Gamma_{n}\textrm{ for some }n \hspace{1em} \iff \hspace{1em} t'_{i} \in \langle t_{i} \rangle \]
where $\langle t_i \rangle$ denotes the subgroup of $A$ generated by $t_i$. Let $k_{i} := |\langle t_{i} \rangle|$ be the cardinality. Write $I \subset \{ 1, \hdots, m \}$ for those $i$ such that $t'_{i} \in \langle t_{i} \rangle$, and for each $i \in I$ write $0 \leq e_{i} < k_{i}$ for the smallest $e_{i}$ such that $e_{i} t_{i} = t'_{i}$. Then $\Gamma_{n}$ contains the pair $(t_{i}, t'_{i})$ if and only if
\begin{equation}
\label{modularcondition}
n \equiv e_{i} \pmod{k_{i}} .
\end{equation}
If one considers all equations (\ref{modularcondition}) for all $i \in I$ simultaneously, the subset of $\N$ which solves at least one of these equations has a natural density $\delta \in \mathbb{Q}_{\geq 0} \cap [0,1]$. 

Setting $V:= \bigcup _{i \in I} \{t_i'\}\subset X$, our main result outside the Bézout range is the following:
\begin{thm}
\label{densityofnonintthm}
For a density $1$ subset of $n \in \N$, the intersection $X \cap  [n] Y$  lies in $V$. In particular for a density $1 - \delta$ subset of $\N$, we have $X \cap [n] Y = \varnothing$.
\end{thm}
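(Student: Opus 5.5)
Let $d_X = \dim X$ and $d_Y = \dim Y$, with $d_X + d_Y < g := \dim A$. As in the proof of \Cref{thmA}, we reformulate the problem in $A \times A$: a point $x \in X \cap [n]Y$ is the same as a point $(y,x) \in (Y \times X) \cap \Gamma_n$ with $x = ny$. Since $\dim \Gamma_n + \dim(Y \times X) < 2g$, every such intersection is unlikely. The plan is as follows.

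\textbf{Step 1 (reduction to the Zilber--Pink locus).} The geometric Zilber--Pink theorem \cite[Thm. 7.1]{2024arXiv240616628B}, in the uniform form of \Cref{rhounigeoZP}, should give the following. There is a finite collection of proper special subvarieties $T_1, \dots, T_r \subset A \times A$ (torsion translates of abelian subvarieties) such that every point of $(Y \times X) \cap \Gamma_n$ lies in some $T_j \cap \Gamma_n$ for which the intersection of $Y \times X$ with $T_j$ is itself atypical.

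We then use simplicity of $A$. Abelian subvarieties of $A \times A$ are graphs-like objects determined by subspaces of $\operatorname{End}(A)^{\oplus 2}$ (more precisely, $\operatorname{End}(A) \otimes \mathbb{Q}$-submodules). Hence the components of $T_j \cap \Gamma_n$ that can meet $Y \times X$ are of two kinds:
\begin{itemize}
\item[(a)] isolated torsion points, arising from $T_j$ of codimension $g$ transverse to $\Gamma_n$;
\item[(b)] positive-dimensional pieces, which occur only when $\Gamma_n$ and $T_j$ are "aligned", i.e.\ $T_j$ is a translate of a graph-type subvariety related to $[n]$.
\end{itemize}

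\textbf{Step 2 (the aligned case (b)).} For a fixed translate of a graph $\Gamma_\lambda$ (or of its subvarieties), the set of $n$ for which $\Gamma_n$ meets that translate in positive dimension is finite, or at worst lies in a single residue class. Here we use that $[n] - \lambda$ is an isogeny unless $n = \lambda$. Indeed, by simplicity, $\Gamma_n \cap (\Gamma_\lambda + t)$ is finite unless $n = \lambda$, and finite intersections are then torsion points. So only finitely many $n$ are exceptional, and all surviving intersection points are torsion points $(y,x)$ of $Y \times X$ lying on $\Gamma_n$.

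\textbf{Step 3 (torsion points and density).} By Raynaud, the only torsion points of $Y \times X$ are the pairs $(t_i, t_i')$. As computed before the statement, $(t_i, t'_i) \in \Gamma_n$ forces $i \in I$ and $x = t_i' \in V$. Hence for all $n$ outside a finite set (certainly a density-zero set), $X \cap [n]Y \subset V$. The second claim follows because $n$ avoids all the congruences \eqref{modularcondition} on a set of density exactly $1 - \delta$.

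\textbf{Main obstacle.} The delicate point is Step 1: we need the non-torsion intersection points to be captured by finitely many special families with control over $n$. The risk is positive-dimensional families of $T$, for example $\{0\} \times B$ type loci with torsion translates varying with $n$. If finiteness is not available directly, my fallback is a Pila--Zannier style counting argument. One shows that points of $(Y \times X) \cap \Gamma_n$ outside the torsion set correspond to rational points of height polynomial in $n$ on a definable set. An o-minimal point count combined with equidistribution (\Cref{equidi}) then only yields that such intersections are nonempty for a density-zero set of $n$. This would explain why the theorem asserts density $1$ rather than cofiniteness.
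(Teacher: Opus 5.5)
There is a genuine gap, and it sits where you suspected. Step~1 does not follow from geometric Zilber--Pink (\Cref{geoZPAA} or \Cref{rhounigeoZP}).

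\begin{itemize}
\item That theorem controls atypical intersections with \emph{weakly special} subvarieties, i.e.\ translates by arbitrary points. In your range it says nothing about isolated points. Since $\dim X,\dim Y<g$, the loci $\{y\}\times X$ and $Y\times\{x\}$ are already atypical components of $(Y\times X)\cap(\{y\}\times A)$ and $(Y\times X)\cap(A\times\{x\})$. So every point of $Y\times X$ lies in an atypical weakly special subvariety from one of two obvious families, and the theorem gives no information on which points meet $\Gamma_{[n]}$.
\item What you actually assert is that all points of $(Y\times X)\cap\Gamma_{[n]}$, for all $n$, lie in finitely many \emph{torsion} translates $T_j$. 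That is the arithmetic Zilber--Pink conjecture for $Y\times X\subset A\times A$, which is open. It would even give $X\cap[n]Y\subset V$ for all but finitely many $n$, strictly more than the theorem claims.
\item Step~2 inherits the problem. The deduction ``finite intersections are torsion points'' needs $T_j$ to be a torsion translate. It fails for $\Gamma_{\beta,\beta'}+t$ with $t$ arbitrary.
\item Your handling of \emph{positive-dimensional} components is fine. These really are controlled by geometric Zilber--Pink together with \Cref{intersectionlamb}, and occur for only finitely many $n$. Step~3 is also fine.
\end{itemize}

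Your fallback is the route the paper takes, but as sketched it omits the decisive step, and equidistribution plays no role. The paper counts rational \emph{parameters} $(n,\mathbf a)$ of the lifted pieces $L_{n,\mathbf a}=L_n+(0,\mathbf a)$, $\mathbf a\in\Z^{2g}$, of $\pi_2^{-1}(\Gamma_{[n]})$ that meet $\mathcal F\times\mathcal F$. There are $O(\mathrm{poly}(n))$ of them, of height $O(\mathrm{poly}(n))$. The count is restricted to a definable set $\mathcal R_0$ of isolated intersections not lying in positive-dimensional intersections with the Zilber--Pink fibres. Pila--Wilkie in block form only covers these points by $O(T^\epsilon)$ semialgebraic blocks; the real work is to show that no block contains two admissible parameters. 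This needs two ingredients:
\begin{enumerate}
\item[(i)] Distinct parameters give distinct points of $\widetilde Y\times\widetilde X$, because a common point of $\Gamma_{[n]}$ and $\Gamma_{[n']}$ with $n\neq n'$ is torsion (\Cref{P0injlem}).
\item[(ii)] A functional transcendence argument. Complexifying a semialgebraic curve through two parameters gives an algebraic family of affine subspaces sweeping out a $(g+1)$-dimensional $W$. Its intersection with $\widetilde Y\times\widetilde X$ has a positive-dimensional component $V$. Since $\dim X+\dim Y<\dim A$, this intersection is atypical, so Ax--Schanuel puts $V$ into a Zilber--Pink fibre. The definition of $\mathcal R_0$ then forces $V$ to be algebraic. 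Ax--Lindemann then yields a positive-dimensional weakly special subvariety of $A\times A$ inside $Y\times X$. This is impossible, since by simplicity such subvarieties have dimension $\geq g>\dim(Y\times X)$.
\end{enumerate}
This is where both hypotheses are used, and it is missing from your proposal. Only after it does one get that at most $O(N^\epsilon)$ integers $n\leq N$ give a non-torsion intersection point. Combined with your Step~3, that yields the theorem.
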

In the above, density is understood as \emph{natural density}; that is 
\begin{displaymath}
| \{n \in \N  : X \cap  [n] Y\subset V\}|/n \to 1, \text{  as  }  n \to + \infty. 
\end{displaymath}
The proof uses ideas from o-minimality and is inspired by the so-called Pila-Zannier strategy.

\begin{rem}
Though we decided to state \Cref{densityofnonintthm} in terms of natural density, the proof actually gives a more refined result. Namely that the cardinality of the ``bad'' $n$ for which $X \cap [n] Y$ is neither empty nor lies in $V$ is at most $O(n^\epsilon)$ for any $\epsilon >0$. In fact the recent solution of Wilkie's conjecture \cite{zbMATH07817080}, enables one to replace $n^{\ep}$ with $(\log n)^{a}$ for some fixed exponent $a > 0$.
\end{rem}

\subsubsection{Uniformity}\label{secunif}
To conclude the introduction, we briefly comment on the uniformity aspect of our ``inside the range'' theorems and record a more general result in this direction. Results of this nature have appeared previously; see, for instance, \cite{zbMATH02150540}, \cite[Thm. 1.8]{zbMATH07393134}, and \cite[Chap. 24]{zbMATH07516542}. However, the applications considered here require a level of generality not covered by the existing literature. In the setting of abelian varieties, the general result \Cref{rhounigeoZP} specializes to the following statement.

\begin{defn}
A \emph{family} (of complex algebraic varieties) is simply a complex algebraic map $f : B \to T$ of complex algebraic varieties. A \emph{subfamily} of $f$ is a pair $(\iota : C \hookrightarrow B, g : C \to T)$ of complex algebraic maps, where $\iota$ is a locally closed embedding and $g = f \circ \iota$.
\end{defn}

\begin{thm}\label{uniformityinabeliansetting}
Let $f : A \to T$ be a family of abelian varieties, and let $\rho : S \to T$ with $\iota : S \hookrightarrow A$ be a subfamily of algebraic subvarieties of the fibres of $f$. Then there exists a finite collection $(\iota_{i}, g_{i})$ of subfamilies of $f$ such that the fibres of the $g_{i}$ are exactly the \emph{maximal atypical intersections} between the fibres of $\rho$ and translates of abelian subvarieties of the fibres of $f$.
\end{thm}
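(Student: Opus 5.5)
The plan is to derive \Cref{uniformityinabeliansetting} from the general uniform geometric Zilber--Pink statement \Cref{rhounigeoZP}, applied to the variation of Hodge structure $R^1f_*\Z$ on $A$. What has to be checked is that the abstract objects there become the concrete ones here: the weakly special subvarieties of the fibres of $f$ are exactly the translates of abelian subvarieties, and atypicality is measured by the dimensions in each fibre.

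In the fibrewise setting the argument runs as follows. Fix $t \in T$ and write $S_t = \rho^{-1}(t) \subset A_t$. An intersection component $W \subset S_t \cap (B + a)$, with $B \subset A_t$ an abelian subvariety, is atypical when
\[ \dim W > \dim S_t + \dim B - \dim A_t . \]
It is maximal when it is not strictly contained in another atypical component of the same kind. By the geometric Zilber--Pink theorem \cite[Thm.~7.1]{2024arXiv240616628B}, applied to $S_t$ with its (mixed) period map, the maximal atypical components are parametrised by finitely many families. Each family is attached to one of finitely many abelian subvarieties $B$ of $A_t$, taken up to the monodromy of $f$, and the corresponding $W$ are fibres of the quotient map $S_t \to A_t/B$ lying over points where the fibre dimension jumps.

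To get uniformity in $t$, I would work over $T$ with the relative Hilbert scheme, or the relative Chow variety, of abelian subschemes. After passing to a finite étale cover and a stratification of $T$, the abelian subvarieties occurring fibrewise form countably many algebraic families $\mathcal{B}_j \to T_j$. For each $j$, the locus in $S \times_T \mathcal{B}_j$ where the fibre of $S_t \to A_t/B$ through a point has excess dimension is constructible, by upper semicontinuity of fibre dimension. Keeping only the maximal components over this locus and taking images in $A$ produces subfamilies $(\iota_i, g_i)$. Noetherian induction on $T$ then glues these strata together.

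The main obstacle is to show that only finitely many $j$ contribute. This is where \Cref{rhounigeoZP} enters: it bounds the relevant weakly special data uniformly via an o-minimal definability argument. The period map of the family $\rho$ is definable, and the atypical locus is cut out by a definable family indexed by $\Gamma$-orbits of sub-Hodge data. Combining definability with countability, as in the Bogomolov-type argument of \cite[Thm.~1]{zbMATH03730302}, gives finiteness. Once finiteness holds, the fibres of the $g_i$ are by construction exactly the maximal atypical intersections, which is the statement of \Cref{uniformityinabeliansetting}.
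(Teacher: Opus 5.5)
Your strategy of deducing the statement from \Cref{rhounigeoZP} is the paper's; its proof is the single remark ``deduced from \Cref{rhounigeoZP} in the same way'' at the end of \S\ref{sec5}. However, the variation you feed in is the wrong one, and with it the identification you say must be checked is false.

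The local system $R^1 f_*\Z$ lives on $T$. Pulled back to $A$ (or to $S$) it is constant on each fibre $A_t$, so every subvariety of $A_t$ has trivial algebraic monodromy, and the only weakly special subvariety of $A_t$ for it is $A_t$ itself. The $\rho$-weakly special subvarieties of $S$ are then just the components of the fibres $S_t$. So \Cref{rhounigeoZP} returns nothing beyond the $S_t$ themselves and says nothing about translates of abelian subvarieties.

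What is needed is the mixed variation of \S\ref{periodtorsdefforA} coming from the (relative) Poincar\'e bundle. This is the local system $\mathbb{V}^{\vee}$ on $A$ whose fibre at $a \in A_t$ is the extension of $\Z(1)$ by $H_1(A_t,\Z)$ classified by $a$, and whose monodromy along $A_t$ is $\pi_1(A_t)$. For this variation, \Cref{abelianweakspthesame} identifies the weakly special subvarieties of $A_t$ with the translates of abelian subvarieties. Hence the $\rho$-weakly special subvarieties of $S$ for $\iota^*\mathbb{V}^{\vee}$ are the irreducible components of the intersections $S_t \cap (B+a)$, and the maximal monodromically atypical ones are the maximal atypical intersections of the statement. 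Your second paragraph mentions a ``(mixed) period map'' fibrewise, but the uniform step is set up with the pure, fibrewise-trivial $R^1 f_*\Z$, and the key identification is never verified.

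Once $\iota^*\mathbb{V}^{\vee}$ is in place, \Cref{rhounigeoZP} applied to $\rho : S \to T$ directly yields finitely many $\rho$-families whose fibres are exactly those maximal atypical intersections. Composing with $\iota$ gives the $(\iota_i, g_i)$. Your second and third paragraphs are therefore not needed: the fibrewise Zilber--Pink step, the relative Hilbert schemes of abelian subschemes, the constructible excess-dimension loci, and the gluing over strata.

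On their own, those paragraphs would also not work as written. You cannot keep ``only the maximal components'' inside a single family $\mathcal{B}_j$. Maximality is with respect to atypical intersections coming from all the countably many families simultaneously, and handling that is precisely what \Cref{rhounigeoZP} does.

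Finally, the paper's proof of \Cref{rhounigeoZP} is not an o-minimal definability argument. It goes through the period torsor, its flat foliation, and the constructibility and countable-to-finite results of \cite{2024arXiv240616628B}. This is harmless if you use the theorem as a black box, but the attribution should be corrected.
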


The phrase ``maximal atypical intersections'' is likely new to many readers. To explain where it comes from, we give a brief overview of the Zilber-Pink philosophy in \S\ref{typvsatyp}. To relate to the setting of \autoref{thmA} and \autoref{mainthmB}, the point is that each non-proper intersection between $X$ and $[n] Y$ will give rise to a non-proper intersection -- necessarily of positive dimension -- between $Y \times X$ and the graph of $[n]$, and the latter type of intersection will be a ``maximal atypical'' intersection between $Y \times X$ and this graph in our language. One can then recover \autoref{mainthmB}(1) from \autoref{uniformityinabeliansetting} by choosing $f$ to be an appropriate family of self-product abelian varieties, letting $\rho$ be a family of varieties of the form $Y \times X$, and then analyzing the possibilities for the $g_{i}$. Finally for recent results related to the uniformity of the \emph{arithmetic part of the Zilber-Pink conjecture} we refer to the survey \cite{2021arXiv210403431G}.

\subsection{Plan of the paper}

In \Cref{sec2} we review two pieces of background. In the first subsection \S\ref{typvsatyp} we describe in broad terms the language of unlikely intersection theory and how it specializes to our situation. In the second subsection \S\ref{periodtorsdefforA} we review the construction of the Poincar\'e torsor and explain how to recover from it a local system that will be important in our proof of the uniform theorem \autoref{uniformityinabeliansetting}. In \S\ref{sec:deg} we review some standard facts about degrees of varieties.

In \Cref{sec3} we give a complete proof of \autoref{thmA} and \autoref{mainthmB} (2). This section can be read independently of \S\ref{typvsatyp} and \S\ref{periodtorsdefforA}. 

In \Cref{sec4} we prove the uniform geometric Zilber-Pink conjecture for variations of mixed Hodge structures. In addition to specializing to \autoref{uniformityinabeliansetting} above, the extra generality will allow us to also study a uniform version of the Shimura variety variant of this problem in a subsequent article; we also expect it to have other applications. In particular we make our arguments in \Cref{sec3} uniform.

In the final \Cref{finalsec}, we prove \Cref{densityofnonintthm}. This section can be read independently and relies on more arithmetic ideas, rooted in Diophantine geometry, in contrast with the more geometric approach of the preceding sections.

\subsection*{Acknowledgements}
We thank C. Daw for a related and very helpful discussion and J. Tsimerman for a discussion surrounding the ideas behind \Cref{abthmsimplecase}(2). We are grateful also to G.  Binyamini, O. Debarre, Ph. Engel, Z. Gao, S. Grushevsky, J. Pila, and A. Shankar for their interest.

This work was done while the authors were at the Institute of Advanced Study in Princeton and they would like to thank the institute for its hospitality and for providing excellent working conditions. They are also grateful for the support of the Ambrose Monell Foundation as well as the Giorgio and Elena Petronio Fellowship Fund.

G.B. was partially supported by the grant ANR-HoLoDiRibey of the Agence Nationale de la Recherche.

\section{General background}\label{sec2}
\subsection{The language of unlikely intersection problems}\label{typvsatyp}

In unlikely intersection theory, and in particular when studying conjectures of ``Zilber-Pink type'', one usually considers a quadruple $(Z, \mathcal{S}, \mathcal{W}, Y)$, where
\begin{itemize}
\item[-] the variety $Z$ is a complex variety, the \textbf{ambient variety}, typically smooth and quasi-projective;
\item[-] the set $\mathcal{S}$ is a collection of irreducible subvarieties of $Z$, with $Z \in \mathcal{S}$, the \textbf{special varieties};
\item[-] the set $\mathcal{W} \supset \mathcal{S}$ is a larger collection of irreducible subvarieties of $Z$, the \textbf{weakly special} varieties; and
\item[-] the variety $Y \subset Z$ is an irreducible closed subvariety which is not contained in any element of $\mathcal{S}$ other than $Z$ itself.
\end{itemize}
One then studies intersections between $Y$ and the special and weakly special subvarieties of the ambient variety $Z$. The dimensions of these varieties should then predict when and how often $Y$ intersects elements of $\mathcal{S}$ and $\mathcal{W}$.

\begin{defn}
Suppose $W \in \mathcal{W}$. An irreducible component $C \subset Y \cap W$ is called an \emph{atypical} intersection (with $Y$) if $\codim_{Y} C < \codim_{Z} W$. It is called a \emph{typical special} intersection if $W \in \mathcal{S}$, we have $\codim_{Y} C = \codim_{Z} T$, and $C$ does not arise as an atypical component of $Y \cap W'$ for some other $W' \in \mathcal{S}$. More generally $C$ is a \emph{typical weakly special} intersection if we have $\codim_{Y} C = \codim_{Z} W$, and $C$ does not arise as an atypical component of $Y \cap W'$ for some other $W' \in \mathcal{W}$.
\end{defn}

\begin{rem}
The above is related to the definition of expected and unexpected components appearing in \Cref{meetproperly}. However here it is used in a much more rigid setting, where one is allowed to intersect only against special or weakly special subvarieties. \Cref{beginiingabbez} is devoted to the study of the precise relationship between the two notions.
\end{rem}

\begin{defn}
Given a quadruple $(Z, \mathcal{S}, \mathcal{W}, Y)$ as above, a \emph{special} (resp. \emph{weakly special}) subvariety of $Y$ is an irreducible component of an intersection $Y \cap W$ for $W \in \mathcal{S}$ (resp. $W \in \mathcal{W}$). We also call such subvarieties special (resp. weakly special) intersections.
\end{defn}

A special (resp. weakly special) subvariety of $Y$ is called strict if it is not equal to $Y$. One is interested in the following two statements for varying $Y$:
\begin{itemize}
\item[(1)] The \textbf{Zilber-Pink statement}: the maximal-under-inclusion strict atypical special subvarieties of $Y$ form a finite set.
\item[(2)] The \textbf{geometric Zilber-Pink statement}: the maximal-under-inclusion strict atypical weakly special subvarieties of $Y$ belong to finitely many algebraic families of subvarieties of $Y$.
\item[(3)] The collection of typical special intersections with $Y$ is dense in $Y(\mathbb{C})$ if and only if there exists at least one such intersection.
\end{itemize}
The geometric Zilber-Pink statement is known in broad generality --- in particular whenever the pair $(\mathcal{W}, Y)$ comes from an admissible variation of mixed Hodge structures --- as a consequence of \cite{2024arXiv240616628B}; see also \S\ref{unifgeozpsec}. On the other hand the Zilber-Pink problem remains open even in some of the simplest cases, for example when $Z = \mathbb{G}^{r}_{m}$ and the special subvarieties are given by torsion translates of algebraic subgroups.

In the situations which will be of interest to us, both in this paper and a paper to follow, the above setup specializes as follows:

\paragraph{Abelian Varieties:} If $Z = A$ is an abelian variety, then $\mathcal{S}$ is the collection of all translates of abelian subvarieties of $A$ by torsion points (so just $A$ itself and the collection of torsion points if $A$ is simple), and $\mathcal{W}$ is the collection of all translates of abelian subvarieties of $A$ by arbitrary points (so in particular contains all points of $A$). 

\paragraph{Shimura Varieties:} If $Z = S$ is a Shimura variety, then $\mathcal{S}$ is the collection of all irreducible components of images under Shimura morphisms $S' \to S$. The set $\mathcal{W}$ is in general larger, and contains in addition all subvarieties obtained as irreducible components of maps of the form $S_{1} \times \{ s_{2} \} \to S$, where $S_{1} \times S_{2} \to S$ is a Shimura morphism. (Note that one can take $S_{1}$ to itself consist of a point, so all points of $S$ are again weakly special.) An easy example to keep in mind is $\mathcal{A}_1\times \mathcal{A}_1\to \mathcal{A}_2$, where $\mathcal{A}_{g}$ is the moduli space of principally polarized abelian varieties.

\paragraph{Weakly Specials of a Local System:} Given an irreducible algebraic variety $Z$ with a local system $\mathbb{V}$ on $Z$, there is an alternative notion of a weakly special subvariety $W \subset Z$ for $(Z, \mathbb{V})$: this is an irreducible algebraic subvariety $W$ which is maximal for the dimension of its algebraic monodromy group $\mathbf{H}_{W}$. Here $\mathbf{H}_{W}$ is the identity component of the algebraic group
\[ \overline{\textrm{im}\left[ \pi_{1}(W^{\textrm{nor}},w) \to \GL(\mathbb{V}_{w}) \right]}^{\textrm{Zar}} \]
where $w$ is a point of the normalization $W^{\textrm{nor}} \to W$ of $W$. Note that $\mathbf{H}_{W}$ is independent of $w\in W$ as an abstract group. 

In situations of interest, the set $\mathcal{W}$ referred to above can usually be described in this way for some local system $\mathbb{V}$ on $Z$; this will always be the case for us. In the context where $Z = A$ is an abelian variety, the local system $\mathbb{V}$ is the one coming from the Poincar\'e torsor, and is reviewed in \S\ref{periodtorsdefforA}.

\paragraph{References:} Another overview of the ``atypical-vs-typical'' perspective can be found for example in \cite[Sec. 2.2]{2023arXiv231211246B}. Other references include: 
\begin{itemize}
\item For results concerning atypical intersections: \cite{2021arXiv210708838B} as well as \cite{2024arXiv240616628B} for an effective proof of geometric Zilber-Pink that works in the more general mixed setting. Working in the setting of a VMHS (as opposed to working only with pure VHSs) is necessary to obtain results about abelian varieties. One of the first references handling this case explicitly is \cite{MR3552014}.

\item For results concerning typical intersections: \cite{2023arXiv230316179K, 2022arXiv221110592E} for the case of pure variations of Hodge structures. Related to this viewpoint in the mixed setting is also the study of likely intersection problems for modular curves and torsion in abelian schemes, cf. \cite{zbMATH07259017} and the later work of Gao, for instance \cite{zbMATH07305885}.
\end{itemize}

\subsection{The Poincar\'e bundle}\label{periodtorsdefforA}

Let $A$ be an abelian variety, and write $A^{\vee}$ for its dual. The variety $A^{\vee}$ is the solution to the moduli problem which parameterizes degree zero line bundles on $A$. More precisely, it is a variety such that for each complex algebraic variety $T$ we have a natural bijection
\begin{align*}
A^{\vee}(T) &:= \operatorname{Hom}(T, A^{\vee}) \\
&\simeq \left\{ \mathcal{L} \in \operatorname{Pic}(A \times T) : \deg \restr{\mathcal{L}}{A \times \{ t \}} = 0 \hspace{1em} \forall t \in T(\mathbb{C}) \hspace{0.25em} \textrm{ and } \hspace{0.25em} \restr{\mathcal{L}}{\{ 0 \} \times T} \textrm{ is trivial} \right\} .
\end{align*}
The universal line bundle $\mathcal{P} \to A \times A^{\vee}$ which corresponds to the identity morphism in $\operatorname{Hom}(A^{\vee}, A^{\vee})$ is called the Poincar\'e bundle (see also \cite[Sec. 4]{zbMATH07206871}). If we remove its zero section we obtain a $\mathbb{G}_{m}$-torsor $\mathcal{P}^{\times} \to A \times A^{\vee}$ with the property that $\rho : \mathcal{P}^{\times} \to A$ has the structure of a family of semi-abelian varieties over $A^{\vee}$ with torus rank one. Let $g : \mathcal{P}^{\times} \to A \times A^{\vee}$ be the natural map, let $T$ be the corresponding family of kernels over $A$, and write $\textrm{pr} : A \times A^{\vee} \to A$ for the natural projection.

Consider the cohomology local system $\mathbb{V} := R^{1} \rho_{*} \mathbb{Z}$. It fits into an exact sequence
\begin{equation}
\label{PTexactseq}
0 \to R^{1} \textrm{pr}_{*} \mathbb{Z} \to \mathbb{V} \to R^{1} \left(\restr{\rho}{T}\right)_{*} \mathbb{Z} \to 0 .
\end{equation}
Note that $R^{1} \textrm{pr}_{*} \mathbb{Z}$ is trivial of rank $2g$, and $R^{1} \left(\restr{g}{T}\right)_{*} \mathbb{Z}$ is trivial of rank $1$. The local system $\mathbb{V}$ of rank $2g+1$ has monodromy group isomorphic to $\mathbb{Z}^{2g}$. Indeed, the fibres of of the exact sequence (\ref{PTexactseq}) can be identified with mixed Hodge structure extensions of $H^{1}(A, \mathbb{Z})$ by $\mathbb{Z}(-1)$, which by \cite[\S3]{zbMATH03737806} (cf. \cite[Prop. 4.3.14]{filipnotes}) are naturally identified with points of $A^{\vee}$. If we dualize the sequence (\ref{PTexactseq}) we obtain a local system $\mathbb{V}^{\vee}$ whose fibres are extensions of $\mathbb{Z}(1)$ by $H_{1}(A, \mathbb{Z})$ and are naturally identified with points of $A$. Under this identification the VMHS $\mathbb{V}^{\vee}$ is the tautological one, and its monodromy group is identified with the fundamental group of $A$. 

The local system $\mathbb{V}^{\vee}$ will be useful because it will give us another way of understanding weakly specials:

\begin{prop}
\label{abelianweakspthesame}
The weakly special subvarieties of $A$ are exactly the weakly special subvarieties of $A$ for the local system $\mathbb{V}^{\vee}$.
\end{prop}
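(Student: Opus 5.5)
Since $\mathbb{V}^{\vee}$ has monodromy identified with $\pi_1(A)\simeq H_1(A,\mathbb{Z})$, the plan is to compute, for an irreducible subvariety $W\subset A$, the algebraic monodromy group $\mathbf{H}_W$ explicitly in terms of the smallest abelian subvariety containing a translate of $W$. Then we compare the ``maximal for $\dim \mathbf{H}_W$'' condition with being a translate of an abelian subvariety.

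\textbf{Step 1: the monodromy representation.} The fibre of $\mathbb{V}^{\vee}$ is an extension $0\to H_1(A,\mathbb{Z})\to \mathbb{V}^{\vee}_a\to \mathbb{Z}\to 0$. Monodromy along $\gamma\in\pi_1(A)=H_1(A,\mathbb{Z})$ acts unipotently: it is the identity on the subobject and the quotient, and sends a lift $e$ of $1\in\mathbb{Z}$ to $e+\gamma$. This is the standard description of the tautological family of extensions over $A=\operatorname{Ext}^1(\mathbb{Z}(0),H_1(A))$, whose universal cover is $H_1(A,\mathbb{C})/F^0$. Hence the representation $\pi_1(A)\to \GL(\mathbb{V}^{\vee}_a)$ is injective, and its Zariski closure is the vector group $H_1(A,\mathbb{Q})\otimes \mathbb{G}_a$ embedded as unipotent matrices. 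For $W\subset A$ irreducible, $\mathbf{H}_W$ is then the $\mathbb{Q}$-span (as a vector group) of the image $\Lambda_W$ of $\pi_1(W^{\mathrm{nor}})\to H_1(A,\mathbb{Z})$. Such a group is already connected, so $\dim \mathbf{H}_W=\operatorname{rk}\Lambda_W$.

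\textbf{Step 2: identify $\Lambda_W\otimes\mathbb{Q}$.} Let $B_W$ be the smallest abelian subvariety such that $W\subset w+B_W$. We claim $\Lambda_W\otimes\mathbb{Q}=H_1(B_W,\mathbb{Q})$. The inclusion $\subseteq$ is clear, since loops in $W$ lie in $w+B_W$. For $\supseteq$, the span $\Lambda_W\otimes\mathbb{Q}$ is a rational subspace $U\subset H_1(B_W,\mathbb{Q})$. Consider the Albanese of $W^{\mathrm{nor}}$, or more concretely a resolution $\tilde W\to W$: the induced map $\operatorname{Alb}(\tilde W)\to B_W$ has image an abelian subvariety containing $W-w$, hence equal to $B_W$. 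Its image on $H_1\otimes\mathbb{Q}$ is therefore all of $H_1(B_W,\mathbb{Q})$, and it factors through $H_1(\tilde W)\to H_1(W^{\mathrm{nor}})$. Surjectivity of $\pi_1(\tilde W)\to\pi_1(W^{\mathrm{nor}})$ for normal varieties lets us pass between $\tilde W$ and $W^{\mathrm{nor}}$ without issue. So $\dim\mathbf{H}_W=2\dim B_W$.

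\textbf{Step 3: conclude.} If $W$ is weakly special for $\mathbb{V}^{\vee}$, then $W\subset w+B_W$, and the latter satisfies $\mathbf{H}_{w+B_W}=\mathbf{H}_W$ by Step 2. Maximality forces $W=w+B_W$, so $W$ is a translate of an abelian subvariety. Conversely, let $W=a+B$ and suppose $W\subsetneq W'$. Then $B_{W'}\supseteq B$, and $W'\subset w'+B_{W'}$, so $\dim B_{W'}\ge\dim W'>\dim B$ and $\dim\mathbf{H}_{W'}>\dim\mathbf{H}_W$. Hence $W$ is maximal, i.e.\ weakly special for $\mathbb{V}^{\vee}$.

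The main obstacle is Step 1, namely pinning down rigorously that the monodromy of $\mathbb{V}^{\vee}$ is the translation action described (with the identifications via \cite{zbMATH03737806}). Once one accepts that $\mathbb{V}^{\vee}$ is the tautological VMHS with monodromy $\pi_1(A)$ acting faithfully by these unipotent matrices, the rest is the Albanese argument of Step 2.
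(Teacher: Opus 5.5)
Your proposal is correct, and it takes a different route from the paper in the sense that it is self-contained. The paper gives no argument of its own. It cites the proof of Prop.~5.1 of \cite{zbMATH05934826}, relying on the assertion made just before the statement that $\mathbb{V}^{\vee}$ is the tautological VMHS whose monodromy group is $\pi_1(A)$.

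Your Steps 1--3 replace that citation with an explicit argument. The key input is the Albanese computation $\Lambda_W\otimes\mathbb{Q}=H_1(B_W,\mathbb{Q})$, i.e.\ $\dim\mathbf{H}_W=2\dim B_W$. This is the monodromy characterization of the smallest translate of an abelian subvariety containing $W$, which is what the cited result supplies. The paper also uses this fact elsewhere, e.g.\ for the finite-index claim at the start of the proof of \Cref{abthmsimplecase}(2). The citation buys brevity; your route buys an explicit formula for $\dim\mathbf{H}_W$ and makes clear exactly which facts are used. Your maximality argument in Step 3 computes $\mathbf{H}_{W'}$ directly via Step 2, so it avoids any need for a general monotonicity of $\mathbf{H}$ under inclusion, which would be awkward with normalizations.

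Two minor remarks:
\begin{enumerate}
\item The fibres of the paper's $\mathbb{V}^{\vee}$, built from the Poincar\'e bundle, may be the dual (up to Tate twist) of the extension $0\to H_1(A,\mathbb{Z})\to\mathbb{V}^{\vee}_a\to\mathbb{Z}\to 0$ that you wrote. This does not matter: the dual of a representation $\gamma\mapsto 1+N_\gamma$, with $N_\gamma$ additive and injective in $\gamma$, is again of this form. So $\dim\mathbf{H}_W=\operatorname{rk}\Lambda_W$ holds either way.
\item The surjectivity of $\pi_1(\tilde W)\to\pi_1(W^{\mathrm{nor}})$ is not needed. The factorization $\tilde W\to W^{\mathrm{nor}}\to A$ already gives $\operatorname{im}H_1(\tilde W)\subseteq\Lambda_W$. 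Together with $\Lambda_W\subseteq H_1(B_W,\mathbb{Z})$, this suffices.
\end{enumerate}
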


\begin{proof}
This follows from the proof of \cite[Prop. 5.1]{zbMATH05934826}.
\end{proof}

The entire construction also works in the relative setting where one considers the universal family $u : \mathbb{A}_{g} \to \mathcal{A}_{g}$ of principally polarized abelian varieties (at least after fixing a suitable level structure). In this case one obtains a local system $\mathbb{V}^{\vee}$ on $\mathbb{A}_{g}$ which specializes to the local system constructed above after restricting to a fibre of $u$.

\subsection{Degree}\label{sec:deg}

We will be interested in the situation where $S$ is a quasi-projective variety with compactification $\overline{S}$, $\mathcal{L}$ is an ample line bundle on $\overline{S}$, and we want to associate a degree to subvarieties $V$ of $S$ (cf. \Cref{unifgeozpsec}); for general reference we follow \cite[Ch.~1, pp.~15--17]{zbMATH02134816}. In this case:

\begin{defn}
\label{degreesubdef}
Given a subvariety $V \subset S$ we denote by $\deg_\mathcal{L} V$ the quantity $\deg_{\mathcal{L}} \overline{V}$, where $\overline{V}\subset \overline{S}$ is the Zariski closure of $V$.
\end{defn}

On any algebraic variety $S$ with projective compactification $(\overline{S}, \mathcal{L})$, the algebraic subvarieties of $S$ of degree at most $d$ belong to finitely many algebraic families. More precisely, there is a natural scheme $\textrm{Var}(S)$, as for instance in \cite[\S4.7]{zbMATH07673321}, which parametrizes geometrically integral subschemes of $\overline{S}$ which intersect $S$, and is an open subscheme of the Hilbert scheme $\textrm{Hilb}(\overline{S})$ of $(\overline{S}, \mathcal{L})$. We obtain a universal family over $\textrm{Var}(S)$ by restricting the one over $\textrm{Hilb}(\overline{S})$. That there are only finitely many components of $\textrm{Var}(S)$ which parametrize subvarieties of degree at most some fixed integer $d$ is proven in \cite{264428} (cf. \cite[Lem. 5.9]{zbMATH07673321}).

\section{Intersections of subvarieties outside the B\'{e}zout range}\label{sec3}

In this section we prove the following, which contains \Cref{thmA} and the second part of \autoref{mainthmB} (2):

\begin{thm}
\label{abthmsimplecase}
Let $A$ be a simple abelian variety and $X,Y$ irreducible Zariski closed subvarieties such that $\dim X+\dim Y \geq  \dim A$. Then 
\begin{enumerate}
\item[(1)] For all but finitely many $\lambda \in  \operatorname{End} (A)$, $X $ and $\lambda Y$ meet properly, in the sense of \Cref{meetproperly}.
\item[(2)] For any infinite subset $\{ n_{i} \}_{i \in \N} \subset \mathbb{Z} \subset  \operatorname{End}(A)$, the set
\begin{displaymath}
\bigcup_{i \in \N  } (X \cap [n_{i}] Y)^{\operatorname{e}}
\end{displaymath}
is analytically dense in $X$.
\end{enumerate}
\end{thm}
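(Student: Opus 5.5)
The plan is to deduce (1) from a Bogomolov-type finiteness statement (geometric Zilber--Pink) in $A \times A$, and to deduce (2) from a transversality statement plus an equidistribution argument in the universal cover $\operatorname{Lie} A$.

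\textbf{Part (1): reformulation.} Since $A$ is simple, $\operatorname{End}(A)\otimes\mathbb{Q}$ is a division algebra, so every $\lambda \neq 0$ is an isogeny. For such $\lambda$ consider the graph $\Gamma_\lambda=\{(y,\lambda y)\}\subset A\times A$. The second projection gives a map
\[ (Y\times X)\cap\Gamma_\lambda \longrightarrow X\cap\lambda Y, \]
which is surjective and has finite fibres (they lie in $\ker\lambda$), so it preserves dimensions of components. Suppose $X$ and $\lambda Y$ fail to meet properly. Then some component $C$ of $(Y\times X)\cap\Gamma_\lambda$ satisfies
\[ \dim C>\dim X+\dim Y-\dim A=\dim(Y\times X)+\dim\Gamma_\lambda-\dim(A\times A), \]
so $C$ is an atypical intersection with the weakly special subvariety $\Gamma_\lambda$. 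Moreover $\dim C\geq 1$, because the right-hand side is $\geq 0$.

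\textbf{Part (1): the weakly special hull of $C$.} Let $B+a$ be the smallest translate of an abelian subvariety of $A\times A$ containing $C$. Intersections of weakly specials are weakly special, so $B+a\subset\Gamma_\lambda$. Since $\Gamma_\lambda\cong A$ is simple and $\dim C\geq 1$, this forces $B=\Gamma_\lambda$.

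\textbf{Part (1): finiteness.} Now I would invoke the geometric Zilber--Pink theorem of \cite[Thm. 7.1]{2024arXiv240616628B} in its Bogomolov form, applied to $Y\times X\subset A\times A$ with the weakly specials described via $\mathbb{V}^\vee$ (\Cref{abelianweakspthesame}). It says that the abelian subvarieties $B$ which arise as weakly special hulls of atypical components of $(Y\times X)\cap(B+a)$ form a finite set. One has to be slightly careful to apply it to weakly optimal subvarieties, i.e.\ to replace $C$ by a maximal atypical subvariety whose hull is still $\Gamma_\lambda$; I expect this to be routine. Given the finiteness, $\Gamma_\lambda$ ranges over a finite set, hence so does $\lambda$. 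Adding $\lambda=0$ proves (1).

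\textbf{Part (2): transversality.} Fix a smooth point $x_0\in X$ and a small neighbourhood $U\ni x_0$ in $X$; we must find $n=n_i$ and a point of $(X\cap[n]Y)^{\mathrm e}$ in $U$. First I would show that for generic smooth $x\in X$ and $y\in Y$,
\[ T_xX+T_yY=\operatorname{Lie}A. \]
If this failed, the addition map $X\times Y\to A$ would nowhere have surjective differential, so $X+Y\neq A$. But for simple $A$ with $\dim X+\dim Y\ge\dim A$ we have $X+Y=A$: the sum is a proper subvariety only if it has a positive-dimensional stabilizer, which simplicity rules out. So, shrinking $U$, there is a nonempty open set $Y^\circ\subset Y$ of points $y$ with $T_xX+T_yY=\operatorname{Lie}A$ for all $x\in U$.

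\textbf{Part (2): equidistribution.} Work in $\operatorname{Lie}A=\C^g$ with lattice $\Lambda$ and a lift $\widetilde U$ of $U$. A point of $U\cap[n]Y$ is exactly a point $y\in Y$ lying in one of the $n^{2g}$ sets
\[ D_\mu=\tfrac1n(\widetilde U+\mu)\bmod\Lambda,\qquad \mu\in\Lambda/n\Lambda. \]
Each $D_\mu$ is an almost linear $\dim X$-dimensional disc of diameter $\asymp 1/n$, tangent to $T_{x_0}X$, and the centres $(\tilde x_0+\mu)/n$ are the $n$-division points of $\tilde x_0$, which equidistribute in $A$ as $n\to\infty$. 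A $\dim X$-disc of radius $r$ near $Y^\circ$, in general position, meets $Y$ transversally with probability $\asymp r^{g-\dim Y}$ per unit volume. So some $D_\mu$ meets $Y^\circ$ at a point $y$ transversally, once $n$ is large. Since $[n]$ is étale, $T_{ny}([n]Y)=T_yY$. Hence $x'=ny\in U$ is a transverse intersection point of $X$ and $[n]Y$, so it lies on a component of the expected dimension. This shows $U\cap(X\cap[n_i]Y)^{\mathrm e}\neq\varnothing$ for all large $i$.

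\textbf{Main obstacle.} The hard step is making the last paragraph rigorous: controlling the discs $D_\mu$ uniformly as graphs over $T_{x_0}X$, and turning equidistribution of the centres into an actual transverse hit of $Y^\circ$. I would try a quantitative implicit function theorem, exploiting that after rescaling by $n$ the discs converge to linear pieces, combined with a volume count of the $\delta$-neighbourhood of $Y^\circ$.
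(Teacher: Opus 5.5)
\textbf{Part (1).} This is essentially the paper's argument: the graph reformulation (\Cref{atypicalprodab}), then simplicity pinning the weakly special hull of a positive-dimensional atypical component to $\Gamma_\lambda$, then geometric Zilber--Pink. Your ``weakly optimal'' worry is in fact vacuous. Suppose $C\subseteq C'$ with $C'$ an atypical component of $(Y\times X)\cap W$. Then $W\supseteq\Gamma_\lambda$, and $W=A\times A$ yields nothing atypical, so $W=\Gamma_\lambda$ and $C'=C$.

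\textbf{Part (2), transversality.} This step is fine. Note, though, that $X+Y=A$ is exactly Barth's theorem (\Cref{bezab}) applied to $X$ and $a-Y$; your stabiliser remark does not prove it.

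\textbf{Part (2), the gap.} The equidistribution step has a genuine gap, and it is not just a matter of rigour. The centres $(\tilde x_0+\mu)/n$ form the grid $\tfrac1n(\tilde x_0+\Lambda)$, so their equidistribution is automatic and carries no information at the relevant scale $1/n$. After multiplying by $n$, the real question is whether $nY$ passes through $\widetilde U+\Lambda$; near $ny$, $nY$ is $C^1$-close to the affine plane $ny+T_yY$. This is an arithmetic property of $Y$ relative to $\Lambda$, and neither transversality nor a volume count can detect it.

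Here is a concrete failure. Take $A=E_1\times E_2$, $X=\{p\}\times E_2$, and $Y=E_1\times\{q\}$ with $q$ torsion. Then $X+Y=A$, $T_xX+T_yY=\operatorname{Lie}A$ everywhere, and all your volume heuristics are identical. Yet $X\cap[n]Y=\{(p,nq)\}$ never meets a small $U$ around $(p,r)$ with $r\notin\langle q\rangle$.

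\textbf{What is missing.} Simplicity must be used a second time, in the following form. No nonzero $k\in\operatorname{Hom}(\Lambda,\Z)$, extended $\R$-linearly, is constant on a germ of $\pi^{-1}(Y)$; otherwise that germ would lie in a translate of the complex hyperplane $\ker k\cap i\ker k$.

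\textbf{How to close it.} The paper feeds this into Weyl's metric equidistribution theorem. Almost every $y$ in your transversality locus has $(n_iy)$ equidistributed modulo $\Lambda$, so $n_iy+a_i\to\tilde x_0$ along a subsequence. The rescaled germs $[n_i](Y\cap B(y,R/n_i))+a_i$ then converge in $C^1$ to a disc in $\tilde x_0+T_yY$. That disc is transverse to $X$, so the germs meet $\widetilde U$ for $i$ large.

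Within your own set-up you could instead use Riemann--Lebesgue. Since $k|_{Y^\circ}$ is a non-constant real-analytic function, $\int_{Y^\circ}e^{2\pi i n k(y)}\,d\mathrm{vol}\to 0$. Hence $y\mapsto ny$ pushes the volume on $Y^\circ$ to Haar measure, which gives, for every large $n$, some $y\in Y^\circ$ with $ny$ close to $x_0$. Conclude by the same rescaling argument.

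\textbf{A smaller point.} $[n]|_Y$ need not be injective. A point where one branch of $[n]Y$ meets $X$ transversally could therefore still lie on an unexpected component coming from another branch. Part (1) rules this out for $n$ large.
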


Part (1) will serve as a warm up for the later, more technically demanding, uniform version announced in the introduction.

\subsection{Proof of \autoref{abthmsimplecase}(1)}\label{beginiingabbez}

For $A$ a simple abelian variety, we denote by $\Gamma_{\lambda}\subset A \times A$ the graph of a non-zero endomorphism $\lambda : A \to A$ (which, since $A$ is simple, is an isogeny). If $\lambda, \lambda' : A \to A$ are isogenies, we write $\Gamma_{\lambda, \lambda'} \subset A \times A$ for the image of $A \xrightarrow{(\lambda, \lambda')} A \times A$. We start with three preparatory lemmas and we freely use the special and weakly special vocabulary reviewed in \Cref{typvsatyp}. 

\begin{lem}\label{intersectionlamb}
Let $(t, t') \in A \times A$ be a point and $\lambda_{1}, \lambda'_{1}, \lambda_{2}$, and $\lambda'_{2}$ be isogenies $A \to A$. If $$\lambda_{1} \lambda'^{-1}_{1} \neq \lambda_{2} \lambda'^{-1}_{2} \in \operatorname{End}(A)_{\mathbb{Q}},$$ then the set $$\Gamma_{\lambda_{1}, \lambda'_{1}} \cap (\Gamma_{\lambda_{2}, \lambda'_{2}} + (t, t'))$$ is finite.
\end{lem}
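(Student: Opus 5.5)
The plan is to reduce the finiteness to the finiteness of the kernel of a single nonzero endomorphism of $A$, using that $A$ is simple, so every nonzero element of $\operatorname{End}(A)$ is an isogeny.

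A point of the intersection has the form $(\lambda_1 a, \lambda'_1 a) = (\lambda_2 b + t, \lambda'_2 b + t')$ for some $a,b \in A$. Since $\Gamma_{\lambda_1,\lambda'_1}$ is the image of $A$ under $(\lambda_1,\lambda'_1)$, and this map has finite kernel, it suffices to show that the set of $a \in A$ for which a suitable $b$ exists is finite.

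First we clear denominators. Work in $D := \operatorname{End}(A)_{\mathbb{Q}}$, which is a division algebra because $A$ is simple. Choose an integer $N \ge 1$ and an isogeny $\mu$ with $\mu = N\lambda'^{-1}_2 \in D$, so that $\mu\lambda'_2 = [N]$. (For instance take $N = \deg \lambda'_2$; then $\lambda'_2$ divides $[N]$, so $N\lambda'^{-1}_2$ is an honest endomorphism.)

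Next we eliminate $b$. From the second coordinate, $\lambda'_1 a - t' = \lambda'_2 b$. Applying $\mu$ gives $\mu\lambda'_1 a - \mu t' = Nb$. From the first coordinate, $N(\lambda_1 a - t) = \lambda_2 N b$. Substituting the previous identity,
\[
N\lambda_1 a - Nt = \lambda_2\mu\lambda'_1 a - \lambda_2 \mu t' .
\]
So $a$ satisfies $\phi(a) = c$, where
\[
\phi := N\lambda_1 - \lambda_2\mu\lambda'_1 \in \operatorname{End}(A)
\]
and $c := Nt - \lambda_2\mu t'$ is a fixed point of $A$.

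The key step is to show $\phi \neq 0$, using the hypothesis. In $D$ we have $\lambda_2\mu = N\lambda_2\lambda'^{-1}_2$, so
\[
\phi = N\bigl(\lambda_1 - \lambda_2\lambda'^{-1}_2\lambda'_1\bigr) = N\bigl(\lambda_1\lambda'^{-1}_1 - \lambda_2\lambda'^{-1}_2\bigr)\lambda'_1 .
\]
The middle factor is nonzero by assumption, $N$ is invertible in $D$, and $\lambda'_1$ is invertible in $D$. Since $D$ is a division algebra, $\phi \neq 0$.

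Hence $\phi$ is a nonzero endomorphism of the simple abelian variety $A$, so it is an isogeny and has finite fibres. The solutions $a$ of $\phi(a)=c$ therefore form a finite set (a coset of $\ker\phi$), and so the intersection is finite.

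The only delicate point is keeping the order of factors straight in the noncommutative ring $D$. That is why the hypothesis is phrased as $\lambda_1\lambda'^{-1}_1$ and why $\lambda'_1$ is factored out on the right.
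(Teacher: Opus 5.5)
Your proof is correct, but it takes a different route from the paper.

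The paper argues geometrically. It uses that the irreducible components of an intersection of weakly special subvarieties are weakly special. It also uses that, for simple $A$, the only weakly special subvarieties of $A\times A$ of dimension at most $\dim A$ are points and translates of $\dim A$-dimensional abelian subvarieties. So the intersection is either finite or the two translated graphs coincide. In the latter case $(t,t')\in\Gamma_{\lambda_1,\lambda'_1}$, which forces $\Gamma_{\lambda_1,\lambda'_1}=\Gamma_{\lambda_2,\lambda'_2}$. The paper then simply asserts that this equality gives $\lambda_1\lambda'^{-1}_1=\lambda_2\lambda'^{-1}_2$.

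You instead eliminate $b$ directly and reduce everything to a single fibre $\phi^{-1}(c)$ of the endomorphism $\phi=N\lambda_1-\lambda_2\mu\lambda'_1$. For this you only need that $\operatorname{End}(A)_{\mathbb{Q}}$ is a division algebra and that nonzero endomorphisms are isogenies.

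What your route buys:
\begin{itemize}
\item It is self-contained, with no appeal to the structure theory of weakly special subvarieties.
\item It makes explicit the algebraic step the paper leaves implicit. Your elimination with $t=t'=0$ shows that equal graphs force $\phi(a)=0$ for all $a$, hence $\phi=0$, hence equal quotients in $\operatorname{End}(A)_{\mathbb{Q}}$.
\item It gives a quantitative bound: the intersection has at most $\deg\phi$ points.
\end{itemize}

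What the paper's route buys: it phrases the lemma in the weakly special language that feeds directly into the later use of geometric Zilber--Pink, and it explains the dichotomy (finite versus coincident) geometrically.

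A minor remark: your appeal to the finite kernel of $(\lambda_1,\lambda'_1)$ is unnecessary. The intersection is the image of a subset of the finite set $\phi^{-1}(c)$, so it is finite regardless.
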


\begin{proof}
As a consequence of \cite[Prop. 5.1]{zbMATH05934826} the intersection of two weakly special subvarieties of an abelian variety has weakly special components. Moreover a weakly special subvariety of $A \times A$ projects onto a weakly special subvariety of $A$ under either projection $A \times A \to A$. Because $A$ is simple, there are no positive-dimensional abelian subvarieties strictly contained in $A$, and hence no positive-dimensional weakly special subvarieties of $A \times A$ of dimension less than $\dim A$. Thus either:
\begin{itemize}
\item[(i)] the intersection $\Gamma_{\lambda_{1}, \lambda'_{1}} \cap (\Gamma_{\lambda_{2}, \lambda'_{2}} + (t, t'))$ is a finite set of points; or
\item[(ii)] we have $\Gamma_{\lambda_{1}, \lambda'_{1}} = \Gamma_{\lambda_{2}, \lambda'_{2}} + (t, t')$.
\end{itemize}
Because $\Gamma_{\lambda_{2}, \lambda'_{2}}$ contains the identity, the second option occurs only if $(t, t') \in \Gamma_{\lambda_{1}, \lambda'_{1}}$. But then $\Gamma_{\lambda_{1}, \lambda'_{1}}$ is an abelian subvariety so it follows by subtracting that $\Gamma_{\lambda_{2}, \lambda'_{2}} \subset \Gamma_{\lambda_{1}, \lambda'_{1}}$, giving the equality for dimension reasons. Such an equality implies that $\lambda_{1} \lambda'^{-1}_{1} = \lambda_{2} \lambda'^{-1}_{2}\in \operatorname{End}(A)_{\mathbb{Q}}$, meaning only (i) is possible.
\end{proof}

\begin{lem}\label{hodgeab}
If $X,Y$ are irreducible subvarieties of $A$ of positive dimension, then $Y \times X \subset A \times A$ does not lie in any strict weakly special subvariety of $A \times A$.
\end{lem}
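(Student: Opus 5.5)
The plan is to argue directly from the description of weakly special subvarieties of an abelian variety in \S\ref{typvsatyp}: they are exactly the translates $B + (a,b)$ of abelian subvarieties $B \subset A \times A$. Throughout, $A$ is simple, as in the rest of the section. So I will suppose that $Y \times X \subset B + (a,b)$ for some abelian subvariety $B \subset A \times A$, and show that $B = A \times A$.

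\textbf{Step 1: normalise the translate.} Fix points $y_0 \in Y$ and $x_0 \in X$. Since $(y_0, x_0) \in B + (a,b)$, we have $B + (a,b) = B + (y_0, x_0)$. Subtracting $(y_0,x_0)$ gives
\[ (Y - y_0) \times (X - x_0) \subset B . \]
Restricting to the slices through $(y_0, x_0)$ then gives $(Y - y_0) \times \{0\} \subset B$ and $\{0\} \times (X - x_0) \subset B$.

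\textbf{Step 2: pass to the generated abelian subvariety.} Let $Y' := Y - y_0$. This is an irreducible subvariety of $A$ containing $0$, of positive dimension. I will use the standard fact that the images of the sum maps $Y'^{\times k} \to A$ form an increasing chain of irreducible closed subvarieties containing $0$. This chain stabilises, and the stable image is an abelian subvariety $\langle Y' \rangle$, namely the smallest one containing $Y'$; it is irreducible and closed under addition, and negation is handled by the standard argument. Since $\dim Y' > 0$, the subvariety $\langle Y' \rangle$ is positive-dimensional. Simplicity of $A$ then forces $\langle Y' \rangle = A$. The same argument gives $\langle X - x_0 \rangle = A$.

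\textbf{Step 3: conclude.} Since $B$ is a subgroup containing $Y' \times \{0\}$, it contains $\langle Y' \rangle \times \{0\} = A \times \{0\}$. Likewise it contains $\{0\} \times A$. Hence $B \supset A \times \{0\} + \{0\} \times A = A \times A$, so the weakly special $B + (a,b)$ is all of $A \times A$ and is not strict.

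The only point requiring care is Step 2, the existence of the abelian subvariety generated by a subvariety through $0$. It is classical (as in Mumford's \emph{Abelian Varieties}) and I will cite it rather than reprove it. The positive-dimensionality hypothesis on both $X$ and $Y$ is used exactly there.
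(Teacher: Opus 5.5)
Your proof is correct, and it takes a different route from the paper's. The paper argues by classification. First, $Y\times X$ cannot lie in a translate of $A\times\{0\}$ because $\dim X>0$. Second, every strict weakly special subvariety of $A\times A$ dominating both factors is a translate of some $\Gamma_{\lambda,\lambda'}$ with $\lambda,\lambda'$ isogenies; this is quoted from an external result on abelian subvarieties of products. The paper then observes that $Y\times X$ cannot sit inside such a translate.

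You never identify the abelian subvariety $B$ at all. After moving $(y_0,x_0)$ to the origin, the two axis slices $(Y-y_0)\times\{0\}$ and $\{0\}\times(X-x_0)$ lie in the subgroup $B$. Simplicity applied to $\langle Y-y_0\rangle$ and $\langle X-x_0\rangle$ then forces $B\supset A\times\{0\}+\{0\}\times A$.

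Your approach buys self-containedness and generality. It uses only that $B$ is an algebraic subgroup, together with the classical existence of the abelian subvariety generated by an irreducible subvariety through $0$. It also disposes of every case in the paper's case analysis at once. Before simplicity is invoked, it shows, with no hypothesis on $A$, that the smallest weakly special subvariety containing $Y\times X$ is $(\langle Y-y_0\rangle\times\langle X-x_0\rangle)+(y_0,x_0)$. So the lemma extends verbatim to products of positive-dimensional subvarieties in products of simple, possibly non-isogenous, abelian varieties.

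The paper's route buys the explicit list of weakly special subvarieties of $A\times A$. It reuses that list immediately in the proof of part (1) of the theorem, where the families produced by geometric Zilber--Pink are written as translates of the $\Gamma_{\beta,\beta'}$.
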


\noindent (The positive dimension condition is necessary, since $\{ y \} \times X$ lies in the weakly special subvariety $\{ y \} \times A$ for any choice of $X$.)

\begin{proof}
If $X,Y$ have positive dimension, their product cannot lie in a weakly special subvariety of the form $A\times \{a\}$. Weakly special subvarieties of $A\times A$ that dominate both factors and are strictly contained in $A \times A$ are translates of images of a map $A \xrightarrow{(\lambda, \lambda')} A \times A$, with both $\lambda$ and $\lambda'$ isogenies (cf. for example \cite[Thm. 1.1]{Kani2005_AbelianSubvarietiesShimura}). If $X$ and $Y$ have positive dimension, it is not possible for $Y \times X$ to lie in $\{(\lambda a, \lambda' a) : a \in A \}$, and the same fact persists after translation, proving the claim.
\end{proof}

\noindent From now on we use the expected and unexpected terminology introduced in \Cref{meetproperly}. 
\begin{lem}\label{atypicalprodab}
If $\dim X+\dim Y \geq  \dim A$, for each $\lambda$ the projection to the second factor induces a surjective finite map $( Y\times X \cap \Gamma_\lambda )^{\operatorname{ue}} \to (X\cap \lambda Y)^{\operatorname{ue}}$, and each component $Z$ of the source surjects onto a component $W$ of the target.
\end{lem}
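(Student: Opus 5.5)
We compare the two intersections through the second projection $p_2 : A \times A \to A$. For $\lambda \neq 0$, $\Gamma_\lambda = \{(a, \lambda a)\}$ gives
\[
(Y \times X) \cap \Gamma_\lambda = \{(y, \lambda y) : y \in Y,\ \lambda y \in X\},
\]
and $p_2$ sends this onto $X \cap \lambda Y$. Its restriction to the set is finite, since its fibres lie in $\lambda^{-1}(x)$, a torsor under the finite group $\ker \lambda$; here we use that $A$ is simple, so $\lambda$ is an isogeny. The degenerate case $\lambda = 0$ is handled directly: both intersections are then empty or a point, and one just checks the statement by hand, or excludes it as the paper implicitly does.

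The main step is to compare expected dimensions, which is the source of the shift. The ambient expected dimensions are
\[
\dim(Y \times X) + \dim \Gamma_\lambda - \dim(A \times A) = \dim X + \dim Y - \dim A
\]
and $\dim X + \dim \lambda Y - \dim A$. These agree because $\dim \lambda Y = \dim Y$, again since $\lambda$ is finite. Since $p_2$ restricted to the intersection is finite and closed, each irreducible component $Z$ of $(Y \times X) \cap \Gamma_\lambda$ maps onto an irreducible closed subset $p_2(Z)$ with $\dim p_2(Z) = \dim Z$.

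It remains to show that $p_2(Z)$ is a component of $X \cap \lambda Y$, and conversely that every component $W$ arises this way. Over $W$, the preimage $p_2^{-1}(W) \cap (Y \times X) \cap \Gamma_\lambda$ is a finite cover of $W$. Some irreducible component $Z'$ of this preimage dominates $W$, and $\dim Z' = \dim W$. Take a component $Z$ of the full intersection containing $Z'$. Then $p_2(Z)$ is irreducible and contains $W$, so maximality of $W$ forces $p_2(Z) = W$. Conversely, suppose $p_2(Z) \subsetneq W'$ for some component $W'$. Apply the same argument to $W'$: it yields a component $Z''$ with $p_2(Z'') = W'$. Then $Z \subset p_2^{-1}(W') \cap (Y \times X) \cap \Gamma_\lambda$, which is a finite cover of $W'$ whose components have dimension at most $\dim W'$. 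This alone does not contradict anything. The cleaner route is to note that $p_2$ restricted to the intersection is finite and surjective onto $X \cap \lambda Y$, and that the map
\[
(Y \times X) \cap \Gamma_\lambda \longrightarrow \{ y \in Y : \lambda y \in X\} = Y \cap \lambda^{-1}X, \qquad (y, \lambda y) \mapsto y,
\]
is an isomorphism. The map $\lambda : \lambda^{-1}X \cap Y \to X \cap \lambda Y$ is then the restriction of the finite flat isogeny $\lambda$. Because finite flat maps are open on the relevant closed sets, or equivalently because $\lambda$ is a finite étale quotient by $\ker\lambda$, the image of an irreducible component is an irreducible component. Concretely, $\lambda^{-1}(X \cap \lambda Y) = \lambda^{-1}X \cap (Y + \ker\lambda)$, so components of $Y \cap \lambda^{-1}X$ are components of this $\ker\lambda$-stable set intersected with $Y$, and quotienting by the finite group $\ker \lambda$ sends components to components.

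Restricting to components of dimension $> \dim X + \dim Y - \dim A$ on both sides then gives the claimed finite surjection $(Y\times X \cap \Gamma_\lambda)^{\mathrm{ue}} \to (X \cap \lambda Y)^{\mathrm{ue}}$, with each $Z$ surjecting onto some $W$. The main obstacle is this last component-to-component step, in particular that no component of the source maps onto a proper subset of a target component. I would settle it via the étale quotient argument by $\ker \lambda$ just described.
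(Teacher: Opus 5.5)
Your finiteness and surjectivity arguments are fine. In particular, taking a component $Z'$ of the preimage that dominates $W$, enlarging it to a component $Z$, and using maximality of $W$ correctly shows that every component $W$ of $X\cap\lambda Y$ equals $p_2(Z)$ for a component $Z$ with $\dim Z=\dim W$. This is more careful than the paper, which never checks that its $Z$ is a component. The gap is the final \'etale-quotient step. Put $S=Y\cap\lambda^{-1}X$ and $T=X\cap\lambda Y$. You correctly get $\lambda^{-1}(T)=\bigcup_{k\in\ker\lambda}(S+k)$, and the base change $\lambda^{-1}(T)\to T$ does send components of $\lambda^{-1}(T)$ onto components of $T$. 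But a component $C$ of $S=Y\cap\lambda^{-1}(T)$ need not be a component of $\lambda^{-1}(T)$. It can lie strictly inside $C'+k$ for another component $C'$ of $S$ and some $0\neq k\in\ker\lambda$, and then $\lambda(C)\subsetneq\lambda(C')$. For the same reason the openness argument fails: $S\to T$ is a restriction, not a base change of $\lambda$, and need not be open.

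This cannot be patched, because the component-to-component clause is false in general. Take $A$ simple of dimension $5$ and $\lambda=[2]$. Pick an irreducible surface $X_0$ and $k\in A[2]$ with $X_0+k\neq X_0$, and set $X=[2]X_0$, so that $[2]^{-1}X=X_0+A[2]$. Choose an irreducible curve $\gamma\subset X_0+k$ with $\gamma\not\subset X_0$. Let $Y$ be a complete intersection of two general members of $|\mathcal{I}_{X_0\cup\gamma}\otimes L^{m}|$, with $L$ ample and $m\gg0$. By Bertini, $Y$ is an irreducible threefold containing $X_0\cup\gamma$ and no translate $X_0+k'$ other than $X_0$ itself. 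Here $e=2+3-5=0$. The only $2$-dimensional irreducible closed subsets of $[2]^{-1}X$ are the $X_0+k'$, and only $X_0$ lies in $Y$, so $\gamma$ is a component of $Y\cap[2]^{-1}X$. Hence $\{(y,2y):y\in\gamma\}$ is an unexpected component of $(Y\times X)\cap\Gamma_{[2]}$. Yet $[2]Y\supset[2]X_0=X$, so $X\cap[2]Y=X$ is irreducible, and $[2]\gamma$ is a curve strictly inside it.

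What does hold is that $p_2(Z)$ is an irreducible closed subset of dimension $\dim Z>e$, hence lies in some unexpected component. This is all the paper's proof actually establishes, since it only checks $\dim p_2(Z)=\dim Z$. Combined with your surjectivity argument, it gives a finite surjection $(Y\times X\cap\Gamma_\lambda)^{\mathrm{ue}}\to(X\cap\lambda Y)^{\mathrm{ue}}$ in which every unexpected $W$ is the image of an unexpected $Z$ of the same dimension. That is exactly what the proof of \autoref{abthmsimplecase}(1) uses.

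A minor point: for $\lambda=0$ and $0\in X$, the source is $Y\times\{0\}$, not a point, and $p_2$ is not finite. So $\lambda=0$ has to be excluded rather than checked by hand.
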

We remark here that, for any $\lambda \in \End(A)$, $\lambda Y$ is irreducible and of dimension $\dim Y$. In particular, if $\dim X+\dim Y \geq  \dim A$, then \Cref{bezab} implies that $X\cap \lambda Y \neq \emptyset$. Moreover the unexpected components of $X\cap \lambda Y$ are necessarily of positive dimension and so are the the ones of $Y\times X \cap \Gamma_\lambda$, by general intersection theory (eventually resting on Krull's principal ideal theorem). The latter fact will be important later to invoke the geometric Zilber-Pink theorem.

\begin{proof} 
Note first that the map $(Y \times X) \cap \Gamma_{\lambda} \to (X \cap \lambda Y)$ is surjective, and also finite since $\Gamma_{\lambda} \to A$ is finite. Let $W$ be a component of $(X \cap \lambda Y)^{\textrm{ue}}$, and consider the locus $Z = \{(y, \lambda y): y \in Y, \lambda y \in W \}$. Note that $Z$ lies inside $Y \times X \cap \Gamma_{\lambda}$ by construction. Then $Z \to W$ is surjective, so it suffices to show that
\[ \codim _{A\times A} (Z) <\codim _{A\times A} (Y \times X) + \codim _{A\times A}( \Gamma_{\lambda}) . \]
Rewriting this inequality more explicitly this means
\begin{displaymath}
 2 \dim A - \dim Z < 2 \dim A - \dim X - \dim Y + \dim A
\end{displaymath}
or, more simply
\begin{displaymath}
\dim Z > \dim X + \dim Y - \dim A.
\end{displaymath}
But $\dim Z = \dim W$, so this is just the condition that $W$ is a component of $(X\cap \lambda Y)^{\text{ue}}$.

It remains to show that every component $Z$ of $( Y\times X \cap \Gamma_\lambda )^{\text{ue}}$ maps to a component of $(X\cap \lambda Y)^{\text{ue}}$. But again using the finiteness of $\Gamma_{\lambda} \to A$ the image $W \subset X \cap \lambda Y$ of $Z$ has the same dimension as $Z$, hence the result.
\end{proof}

We recall the geometric Zilber-Pink theorem for subvarieties of $A \times A$ (proved first by Habegger and Pila in \cite{MR3552014} and also a special case of \Cref{mixedgeomZP}), which says the following:
\begin{thm}
\label{geoZPAA}
Let $Z \subset A \times A$ be an irreducible subvariety that does not lie inside a (strict) weakly special subvariety. Then there exists finitely maps of abelian varieties $\rho_{i} : A \times A \to B_{i}$ such that for any weakly special subvariety $W \subset A \times A$, each component of $[W \cap Z]^{\textrm{ue}}$ lies in the fibre of some $\rho_{i}$. 
\end{thm}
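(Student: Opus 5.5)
The plan is to reduce \Cref{geoZPAA} to two inputs: Ax's theorem (the abelian Ax--Schanuel theorem), which characterizes atypical intersections through the abelian subvariety they generate, and a finiteness statement for the abelian subvarieties that can occur, which I would get from o-minimality together with the countability of abelian subvarieties. Write $\pi : \C^{2g} \to A \times A$ for the uniformization. By \Cref{abelianweakspthesame}, weakly special subvarieties are exactly the translates $B + p$ of abelian subvarieties $B \subset A \times A$, and $\pi^{-1}(B)$ is a $\mathbb{Q}$-rational complex subspace of $\C^{2g}$. Since $Z$ is irreducible, every component of $W \cap Z$ has dimension at most $\dim Z$, so we may take $W$ to be a strict weakly special subvariety.

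\textbf{Step 1 (reduction to optimal intersections).} Let $C$ be a component of $[W \cap Z]^{\textrm{ue}}$ with $W = B + p$. Let $\langle C \rangle = B_{C} + p_{C}$ be the smallest weakly special subvariety containing $C$. Then $\langle C \rangle \subset W$, and $C$ is a component of $\langle C \rangle \cap Z$. The atypicality inequality $\dim C > \dim Z + \dim W - 2\dim A$ persists when $W$ is replaced by the smaller $\langle C \rangle$. So it suffices to treat those $C$ for which $\langle C \rangle$ is itself the witnessing weakly special. I would then enlarge further: among all atypical $C' \supseteq C$, take one maximal for defect $\dim \langle C' \rangle - \dim C'$ and by inclusion, and call these \emph{optimal}. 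Ax's theorem, applied to an analytic component of $\pi^{-1}(C) \cap (\pi^{-1}(B_{C}) + v)$, shows that an optimal $C$ satisfies $\dim C = \dim Z - \operatorname{codim}_{A \times A} \langle C \rangle + \delta$, where $\delta > 0$ is controlled. It also shows that $B_{C}$ is determined by the tangent data of $Z$ along $C$. The hypothesis that $Z$ lies in no strict weakly special subvariety guarantees $B_{C} \neq A \times A$.

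\textbf{Step 2 (finiteness of the $B_{C}$).} The main obstacle is showing that only finitely many abelian subvarieties $B$ arise as $B_{C}$ for optimal $C$. I would first consider the set $\Sigma$ of pairs $(L, z)$ with $L$ a complex subspace of $\C^{2g}$ and $z$ a point of a fixed fundamental domain $F$, subject to the following condition. There is a point $x \in \pi^{-1}(Z) \cap F$ with $x \in z + L$ such that the germ of $\pi^{-1}(Z) \cap (x + L)$ at $x$ has dimension exceeding $\dim Z + \dim L - 2\dim A$. The set $\Sigma$ is definable in $\mathbb{R}_{\textrm{an}}$, because $\restr{\pi}{F}$ is definable and the Grassmannian is compact. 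Within $\Sigma$, I would cut out the subset of those $L$ that are optimal in the sense of Step 1, using the existence of a component of $\pi^{-1}(Z) \cap (x + L)$ not contained in a translate of a smaller subspace with smaller defect. Ax's theorem is what shows that for such optimal $L$, the smallest weakly special containing the image of the component is $\pi(L) + \pi(x)$ itself. Hence the projection to the Grassmannian of optimal $L$ meets only $\mathbb{Q}$-rational subspaces. That projection is a definable set with countably many points, hence finite; this yields finitely many $B_{1}, \dots, B_{m}$.

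\textbf{Step 3 (conclusion).} Set $\rho_{i} : A \times A \to B_{i}' := (A \times A)/B_{i}$, the quotient map. Any unexpected component $C$ lies in some optimal $C'$ with $C' \subset B_{i} + p = \rho_{i}^{-1}(\rho_{i}(p))$ for some $i$, so $C$ lies in a fibre of $\rho_{i}$. The delicate point is making the optimality condition in Step 2 genuinely definable, since it quantifies over all smaller subspaces. If this proves awkward, I would instead argue by induction on $\dim B$. Apply the definable-countable argument to the weaker condition ``$L$ is rational and $\pi(L) + \pi(x)$ is the smallest weakly special containing some atypical component''. Then use Ax's theorem only to show that this condition is closed under the relevant limits. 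Either way, Ax's theorem is indispensable, and the definability bookkeeping is where I expect the real work.
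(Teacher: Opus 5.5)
Your overall route, Ax's theorem plus the fact that a countable definable subset of the Grassmannian is finite, is the Habegger--Pila argument. The paper does not prove this statement itself: it cites Habegger--Pila and notes it is a special case of \Cref{mixedgeomZP}. In your version, however, Step 2, which carries all the weight, uses the wrong optimality condition, and as stated its conclusion is false.

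\textbf{The gap in Step 2.} You only exclude components of $\pi^{-1}(Z)\cap(x+L)$ that lie in a translate of a \emph{smaller} subspace with smaller defect. For the relevant component $U$, this essentially says that $U-x$ spans $L$, which has nothing to do with rationality. Here is a concrete counterexample. Take $A$ simple with $g=\dim A\geq 2$, $Y\subset A$ a curve, and $Z=Y\times A$; by \Cref{hodgeab}, $Z$ lies in no strict weakly special subvariety. Let $x$ lift a point of $\{y\}\times A$ and let $L\subset 0\times\C^{g}$ be any complex line. Then $\pi^{-1}(Z)\cap(x+L)=x+L$. This is atypical, since $1>\dim Z+1-2g$, and it is not contained in a translate of any smaller subspace. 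So every such $L$ passes your test, and the surviving set of $L$ is uncountable. Moreover, the smallest weakly special subvariety containing $\pi(x+L)$ is $\{y\}\times A$, not ``$\pi(L)+\pi(x)$''. So the claim that Ax's theorem makes the surviving $L$ rational fails.

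\textbf{The correct condition.} You need to compare $L$ with \emph{larger}, indeed arbitrary complex, subspaces. Require that for every $L'\neq L$ in the Grassmannian whose intersection germ $\pi^{-1}(Z)\cap(x+L')$ at $x$ contains the germ of $U$, one has
$\dim L'-\dim_x(\pi^{-1}(Z)\cap(x+L'))>\dim L-\dim U$.
\begin{itemize}
\item \emph{Admissible $L$ are rational.} Let $B+p=\langle\pi(U)\rangle$. Since $U\subset x+(L\cap\operatorname{Lie}B)$, Ax's theorem gives $\dim U\leq\dim(L\cap\operatorname{Lie}B)+\dim Y''-\dim B$, where $Y''$ is the component of $Z\cap(B+p)$ containing $\pi(U)$. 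Hence both $L\cap\operatorname{Lie}B$ and $\operatorname{Lie}B$ contain the germ of $U$ with defect at most $\dim L-\dim U$. The condition then forces $L=\operatorname{Lie}B$.
\item \emph{Optimal $C$ produce admissible points.} You also need a second application of Ax, to the component $U'$ of a competing $L'$ via $\langle\pi(U')\rangle$. This shows that every geodesic-optimal $C$ yields a point satisfying the condition with respect to all complex subspaces. Your proposal does not address this direction.
\end{itemize}

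\textbf{Definability is not the obstacle.} The corrected condition quantifies over the compact semialgebraic Grassmannian and uses only local dimensions and germ inclusion, so it is definable in $\mathbb{R}_{\mathrm{an}}$ without difficulty. The ``delicate point'' you flag is therefore not where the problem lies; the problem is the direction of the comparison.

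\textbf{The fallback does not work.} ``$L$ is rational'' is not a definable condition, since the rational points of the Grassmannian form a countable dense set. So the definable-and-countable-implies-finite principle cannot be applied to it.

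\textbf{A slip in Step 1.} You should take $C'$ of \emph{minimal} defect, then maximal under inclusion, not ``maximal for defect''. With that change, Steps 1 and 3 are fine.
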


\begin{proof}[Proof of \autoref{abthmsimplecase}(1)]
Since, as a consequence of \Cref{hodgeab}, the variety $Y \times X$ does not lie in a strict weakly special subvariety of $A \times A$, we can invoke \autoref{geoZPAA}, which in this case says the following:
\begin{quote}
There exist finitely many algebraic families of strict weakly special subvarieties of $A \times A$ such that all components of $(Y \times X \cap \Gamma_{\lambda})^{\textrm{ue}}$ for all $i$ lie in the fibre of such a family.
\end{quote}
Because non-trivial weakly special subvarieties of $A \times A$ are all translates of loci of the form $\Gamma_{\beta, \beta'}$ (as explained during the proof of \Cref{hodgeab}), each of these families may be chosen to be of the form $\{ \Gamma_{\beta, \beta'} + (a, a') \}_{(a, a') \in \mathcal{A} \subset A \times A}$ where $\mathcal{A}$ is some algebraic locus on $A \times A$. Any component of $(Y \times X \cap \Gamma_{\lambda})^{\textrm{ue}}$ is necessarily positive dimensional, so applying \autoref{intersectionlamb} such a component can only be contained in some $\Gamma_{\beta, \beta'} + (a,a')$ if in fact $\lambda = \beta \beta'^{-1}$ in $\operatorname{End}(A)_{\mathbb{Q}}$. Since there are finitely many families, this happens for only finitely many $\lambda$.

Applying \autoref{atypicalprodab} to pass between intersecting $Y \times X$ with $\Gamma_{\lambda}$ and intersecting $X$ with $\lambda Y$ then gives the result.
\end{proof}

\subsection{Proof of \autoref{abthmsimplecase}(2)}\label{equidi}

Let $\pi : \mathbb{C}^{g} \to A(\C)$ be the natural uniformization with kernel $\Lambda$. Observe that no analytic germ of the complex analytic spaces $\pi^{-1}(X)$ and $\pi^{-1}(Y)$ is contained in any non-trivial linear subspace of $\mathbb{C}^{g}$, as can be shown from the simplicity of $A$ and the fact that the maps $\pi_{1}(X) \to \pi_{1}(A)$ and $\pi_{1}(Y) \to \pi_{1}(A)$ have image of finite index as a consequence of \cite[Prop. 5.1]{zbMATH05934826}. We view each of the $n_{i}$ in the given sequence as a linear map $[n_{i}] : \mathbb{C}^{g} \to \mathbb{C}^{g}$ which preserves $\Lambda$. We may observe that it suffices to prove the following claim:
\begin{thm}
\label{firstredcprop}
Let $A = \mathbb{C}^{g} / \Lambda$ be a simple abelian variety, and $\{ [n_{i}] \}_{i \in \N} \subset \operatorname{End}(A)$ an infinite subset. For any two smooth locally closed irreducible complex analytic subvarieties $\widetilde{X} \subset \mathbb{C}^{g}$ and $\widetilde{Y} \subset \mathbb{C}^{g}$ with $\dim \widetilde{X} + \dim \widetilde{Y} = g$, not contained in a non-trivial affine linear subspace of $\mathbb{C}^{g}$, there exists $i$ such that $[n_{i}] \widetilde{Y}$ intersects $\widetilde{X}$ up to translation by $\lambda \in \Lambda$. Moreover the intersection can be assumed to be proper.
\end{thm}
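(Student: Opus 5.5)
The plan is to reduce everything to a single transverse configuration near one pair of points, and then to use an equidistribution statement for dilates of $\widetilde{Y}$ modulo $\Lambda$ to produce the lattice translate.

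\textbf{Step 1 (a transverse pair of points).} Consider the difference map $\Phi : \widetilde{X} \times \widetilde{Y} \to \mathbb{C}^{g}$, $(x,y) \mapsto x - y$. Its differential at $(x,y)$ has image $T_{x}\widetilde{X} + T_{y}\widetilde{Y}$. I first want a pair $(x_{0}, y_{0})$ with $T_{x_{0}}\widetilde{X} \oplus T_{y_{0}}\widetilde{Y} = \mathbb{C}^{g}$. Since $\dim \widetilde{X} + \dim \widetilde{Y} = g$, this is the same as $\Phi$ being submersive somewhere, i.e.\ $\Phi$ having open image.

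In the situation of \autoref{abthmsimplecase}(2), $\widetilde{X}$ and $\widetilde{Y}$ are local pieces of $\pi^{-1}(X)$ and $\pi^{-1}(Y)$. Here I will use that in a simple abelian variety $X - Y$ has dimension $\min(g, \dim X + \dim Y) = g$: otherwise the stabiliser of the difference set would be a nontrivial abelian subvariety, and $X$ or $Y$ would be contained in a translate of a strict abelian subvariety. Real-analyticity then propagates the surjectivity of $d\Phi$ to a dense open subset of any local piece.

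For a general analytic statement I would argue via Gauss maps instead: if $T_{x}\widetilde{X} \cap T_{y}\widetilde{Y} \neq 0$ for all $x, y$, then the image of $\Phi$ is a germ of dimension $< g$. I expect this to contradict the non-linearity hypothesis, but I would fall back on the algebraic input if needed. Transversality is an open condition, so I also fix small polydiscs $U \ni x_{0}$ and $V \ni y_{0}$ on which $T_{x}\widetilde{X} \oplus T_{y}\widetilde{Y} = \mathbb{C}^{g}$ holds with a uniform lower bound on the angle.

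\textbf{Step 2 (equidistribution of dilates).} I will show that $\pi(n_{i} V)$ becomes dense in $A$ as $|n_{i}| \to \infty$. More precisely, let $\mu$ be a smooth compactly supported probability measure on $V$; I claim its pushforward under $y \mapsto \pi(n_{i} y)$ converges weakly to Haar measure. By Weyl's criterion it suffices to check that for each nonzero $\xi$ in the dual lattice,
\begin{displaymath}
\int_{V} e^{2\pi i n_{i} \langle \xi, y \rangle} \, d\mu(y) \longrightarrow 0 .
\end{displaymath}
The phase $\operatorname{Re}$-linear functional $\langle \xi, \cdot \rangle$ is nonconstant on every open subset of $V$. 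Indeed, if a complex submanifold lies in a real affine hyperplane $H$, then it lies in the complex affine hyperplane $H \cap iH$, which is excluded by hypothesis. So the critical set of the phase has measure zero, and a non-stationary phase (van der Corput / Riemann--Lebesgue) argument gives the decay.

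\textbf{Step 3 (quantitative intersection).} Rescaling, $n_{i}V$ is a complex submanifold of inradius $\gtrsim r|n_{i}|$, with tangent planes parallel to those of $V$, and curvature $O(1/|n_{i}|)$. By Step 2 there is $y' \in V$ and $\lambda \in \Lambda$ with $n_{i} y' - \lambda$ within $\delta$ of $x_{0}$, for any $\delta > 0$ once $i$ is large; in fact $y'$ can be chosen near $y_{0}$, because the equidistribution holds for $\mu$ supported in any small subdisc. Near $x_{0}$, the manifolds $\widetilde{X}$ and $n_{i}V - \lambda$ are then complementary-dimensional, uniformly transverse, and the second is $\delta$-close to an almost flat disc through $x_{0}$. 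A quantitative inverse function theorem applied to $(x,y) \mapsto x - n_{i}y + \lambda$ on $U \times (\text{small disc around } y')$ then yields a solution $x = n_{i}y - \lambda$. At this solution the tangent spaces are complementary, so it is an isolated transverse point. This gives the proper intersection of $\widetilde{X}$ with $[n_{i}]\widetilde{Y} - \lambda$.

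I expect the main obstacle to be Step 1 in the purely analytic generality of the statement. The clean way around it is to use the algebraic origin of $\widetilde{X}$ and $\widetilde{Y}$ together with simplicity of $A$. Step 2 also needs some care to make the decay estimate uniform on compact subdiscs, so that the nearby point $y'$ can be localized near $y_{0}$.
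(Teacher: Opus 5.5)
Your overall architecture (a transverse pair, equidistribution of dilates of $\widetilde{Y}$ modulo $\Lambda$, rescaling so that $n_{i}\widetilde{Y}$ is nearly flat near the target point, and stability of a transverse intersection) is the paper's, and your Steps 2 and 3 are sound.

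\textbf{Step 1 and the statement itself.} Your suspicion about Step 1 is justified. The Gauss-map argument you hope for cannot exist, because the statement is false in its purely analytic generality. Take $g=4$, let $\widetilde{X}$ be a small open piece of $\{(t,t^{2},t^{3},s)\}$, and let $\widetilde{Y}=\widetilde{X}+(c,0)$ with $c\in\mathbb{C}^{3}$.
\begin{itemize}
\item Both are smooth, irreducible, $2$-dimensional, and affinely span $\mathbb{C}^{4}$.
\item Both are open pieces of cylinders in the $e_{4}$-direction, so every tangent plane contains $e_{4}$. Hence no transverse pair exists, and no point of $\widetilde{X}\cap([n]\widetilde{Y}+\lambda)$ is isolated.
\item Such a point forces the first three coordinates of $n(c,0)+\lambda$ into the Lebesgue-null set $\{(t-nu,\,t^{2}-nu^{2},\,t^{3}-nu^{3})\}\subset\mathbb{C}^{3}$. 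So for $c$ outside a countable union of null sets, these intersections are empty for all $n\neq 0$ and $\lambda\in\Lambda$.
\end{itemize}
The paper's proof has exactly this hole. It asserts that $x,y$ with $T_{x}L_{x}+T_{x}\widetilde{X}=\mathbb{C}^{g}$ can be chosen ``by the non-linearity assumption together with'' complementary dimension, and the example refutes this.

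So your algebraic fallback is not optional. The theorem needs the extra hypothesis that some pair satisfies $T_{x}\widetilde{X}\oplus T_{y}\widetilde{Y}=\mathbb{C}^{g}$, and this is what holds in the application to \autoref{abthmsimplecase}(2). To establish it there, do not rely on your stabiliser remark, which is too loosely stated to check. Instead apply \Cref{bezab} to $X$ and $Y+a$ for every $a\in A$. This gives $X-Y=A$. Generic smoothness of the surjection $X^{\mathrm{sm}}\times Y^{\mathrm{sm}}\to A$ then gives a dense Zariski-open set of transverse pairs, hence a dense open set of them in any local piece $\widetilde{X}\times\widetilde{Y}$.

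\textbf{Comparison of Steps 2 and 3 with the paper.} Granting that hypothesis, you carry out the same idea differently.

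The paper proceeds as follows. It fixes $x$ and chooses a single $y$ via Weyl's metric theorem so that $(n_{i}y)$ is equidistributed modulo $\Lambda$. It passes to a subsequence with $n_{i}y+a_{i}\to x$ (\autoref{findniailem}). It blows $\widetilde{Y}$ up at $y$ by the factor $n_{i}$ to get embeddings converging in $C^{\infty}$ to the flat disc $L_{x}$ (\autoref{tausigmaseqconstrlem}). It concludes by openness of transversality.

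You instead do two things.
\begin{itemize}
\item You prove weak convergence of the push-forwards of a smooth measure on an arbitrarily small disc $V$ by Riemann--Lebesgue. This needs only that $\widetilde{Y}$ lies in no complex affine hyperplane (your $H\cap iH$ observation), and needs no Weyl-generic point.
\item You replace the limiting-embedding argument by a quantitative inverse function theorem for $(x,y)\mapsto x-n_{i}y+\lambda$ in the rescaled variable $n_{i}(y-y')$. The derivative is uniformly invertible by your uniform transversality on $U\times V$, and the second derivatives are $O(1/n_{i})$.
\end{itemize}

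Your route gives a slightly stronger conclusion: transverse intersection points near any prescribed $x_{0}$ for every sufficiently large $n_{i}$, not just along a subsequence. The cost is tracking uniform constants. The paper's choice of a fixed $y$ fixes the limiting plane $T_{y}\widetilde{Y}$, which makes its final step purely topological.
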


\begin{rem}
Because in the above statement we consider \emph{locally closed} subvarieties in the Euclidean topology, one automatically obtains that the intersections $\widetilde{X} \cap ([n_{i}] \widetilde{Y} + \lambda)$ are dense in $\widetilde{X}$, since $\widetilde{X}$ can always be replaced with any of its open subsets.
\end{rem}

\begin{proof}
For ease of notation we now refer to $\widetilde{X}, \widetilde{Y}$ as $X, Y$, respectively. We fix a point $x \in X$. \autoref{findniailem} below uses an equidistribution argument to produce a sequence of points, arising from a fixed $y \in Y$, which converge to $x$ modulo $\Lambda$. Fix $n_{i}, a_{i}$ and $y$ as in \autoref{findniailem}. The tangent bundle $T \mathbb{C}^{g}$ has an obvious trivialization, and moreover each tangent space is naturally identified with an affine linear subspace of $\mathbb{C}^{g}$. We let $L_{x} \subset \mathbb{C}^{g}$ be the affine linear subspace passing through $x$ which is the translate to $x$ of $T_{y} Y$. 

Let $d$ be the dimension of $Y$, and $D$ be the closed complex $d$-dimensional analytic disk of radius $1$, and $U$ a manifold. The set $\textrm{Emb}(D, U)$ of embeddings of $C^{\infty}$-manifolds has a natural ``weak'' topology, defined as in \cite[Ch. 2, \S1]{zbMATH03555096}. Because $D$ is compact, this is also the same as the ``strong'' topology defined in op. cit., as explained on \cite[pg. 35]{zbMATH03555096}. Fix a collection $v_{1}, \hdots, v_{d}$ of vectors in $T_{x} \mathbb{C}^{g}$ which generate $T_{x} L_{x}$. Let $e_{1}, \hdots, e_{d}$ be the standard unit tangent vectors of the tangent space $T_{0} D$. 

Thanks to \autoref{tausigmaseqconstrlem}, we obtain a sequence $(\sigma_{i})_i$ of elements of $\textrm{Emb}(D, U)$ which converges to a parameterization of $L_{x} \cap U'$, with $U' \subset U$ a compact neighbourhood of $x$ in $U$. Moreover each $\sigma_{i}$ parameterizes an open submanifold of a translate by $a_{i} \in \Lambda$ of $n_{i} Y$. Now observe we could have chosen $x$ and $y$ such that $T_{x} L_{x} + T_{x} X = T_{x} \mathbb{C}^{g}$: this is possible by the non-linearity assumption together with the assumption that $X$ and $Y$ have complementary dimension. It now follows from \cite[Ch. 2, Thm 2.1]{zbMATH03555096} that the image of $\sigma_{i}$ intersects $X$ for $i$ large enough, completing the proof.
\end{proof}

\begin{lem}
\label{findniailem}
There exists $y \in Y$ not lying in any non-trivial $\mathbb{Q}$-subspace of $\Lambda \otimes \mathbb{Q}$, such that, after replacing $\{ n_{i} \}_{i \in \N}$ with a subsequence, there are $a_{i} \in \Lambda$ for which $[n_{i}] y + a_{i} \to x$, as $i\to + \infty$.
\end{lem}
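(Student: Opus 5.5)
The plan is a metric (Weyl-type) equidistribution argument for the pushforward to $\Lambda\otimes\mathbb{R}/\Lambda$ of a smooth measure on $Y$. Choose a small open piece of $Y$ and a smooth compactly supported probability density $\mu = \psi\,dV$ on it, where $dV$ is the Riemannian volume of the real $2d$-dimensional manifold $Y$. Identify $\mathbb{C}^{g} = \Lambda \otimes \mathbb{R}$, so characters of the torus $\mathbb{C}^g/\Lambda$ are $e(\langle k, \cdot\rangle)$ for $k \in \Lambda^{\vee}$, with $e(t) = e^{2\pi i t}$. It suffices to show that for $\mu$-almost every $y$, some subsequence of $\{n_i y\}$ is equidistributed modulo $\Lambda$. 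Then the orbit of $y$ along that subsequence is dense in the torus, so a further subsequence converges to $x$ modulo $\Lambda$, which yields the $a_i$. The same full-measure set also gives the irrationality condition on $y$. If $y$ lay in a proper $\mathbb{Q}$-subspace, then some nonzero $k \in \Lambda^\vee$ would make $\langle k, y\rangle$ rational, and for that $k$ the averages of $e(n_i\langle k,y\rangle)$ could not all tend to $0$ along a subsequence of distinct $n_i$. More cheaply, each proper rational subspace meets $Y$ in a $\mu$-null set, because $Y$ is not contained in an affine subspace, and there are countably many such subspaces.

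\textbf{Step 1 (Fourier decay).} Fix $0 \neq k \in \Lambda^\vee$. The real-linear function $\ell_k = \langle k,\cdot\rangle$ is not constant on any open piece of $Y$. If it were, $Y$ would lie locally in a real affine hyperplane $H$. A complex submanifold inside $H$ lies in the maximal complex subspace $H \cap iH$ (translated), which is a proper complex affine subspace, and by analyticity this contradicts the hypothesis. I then want
\[ \hat\mu(nk) := \int_Y e(n\,\ell_k(y))\,\psi(y)\,dV(y) \longrightarrow 0 \quad (|n|\to\infty). \]
This is Riemann--Lebesgue applied to the pushforward of $\mu$ under $\ell_k$. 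Since $\ell_k|_Y$ is real-analytic and nonconstant, its critical set has measure zero, so this pushforward is absolutely continuous on $\mathbb{R}$. That gives the decay with no rate needed.

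\textbf{Step 2 (second moment).} Set $S_N(y) = \frac1N\sum_{i\le N} e(n_i \ell_k(y))$. Expanding gives
\[ \int |S_N|^2\, d\mu = \frac{1}{N^2}\sum_{i,j\le N}\hat\mu\bigl((n_i-n_j)k\bigr). \]
The $n_i$ are distinct and $\hat\mu(mk)\to 0$, so for each $\epsilon>0$ and each $i$ only boundedly many $j$ have $|\hat\mu((n_i-n_j)k)| > \epsilon$. Hence $\int|S_N|^2\,d\mu \to 0$, i.e.\ $S_N \to 0$ in $L^2(\mu)$.

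\textbf{Step 3 (diagonal extraction).} Enumerate $\Lambda^\vee\setminus\{0\}$. Using $L^2$-convergence, choose $N_1<N_2<\cdots$ such that $\sum_m \int |S^{(k)}_{N_m}|^2\,d\mu < \infty$ for every $k$, for instance by making the $m$-th term less than $2^{-m}$ for the first $m$ values of $k$. Then $S^{(k)}_{N_m}(y)\to 0$ for $\mu$-a.e.\ $y$ and every $k$. This is not literally equidistribution of a subsequence of $(n_i y)$; it is equidistribution of the finite blocks $\{n_1y,\dots,n_{N_m}y\}$ as $m \to \infty$. That is enough for the next step. Fix a small ball $B$ around $x$ modulo $\Lambda$. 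For such $y$ the proportion of $i\le N_m$ with $n_i y \in B$ tends to $\mathrm{vol}(B)>0$, so infinitely many $i$ have $n_iy\in B$. Letting $B$ shrink and extracting diagonally gives the subsequence and the lattice vectors $a_i$ with $[n_i]y + a_i \to x$.

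The main obstacle I expect is Step 1. One has to make sure the non-degeneracy of $\ell_k$ on $Y$ really follows from the non-linearity hypothesis for \emph{every} nonzero $k$, uniformly enough to apply Riemann--Lebesgue. If pushforward absolute continuity turns out to be delicate near critical points, I would first shrink the support of $\psi$ to a neighbourhood where $d(\ell_k|_Y)\neq 0$. This can only be done for finitely many $k$ at a time, so the fallback is to handle all $k$ by covering $Y$ with countably many such neighbourhoods: the critical set of each $\ell_k$ is null, so there are countably many null sets to discard.
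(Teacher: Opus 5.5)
Your argument is correct. It differs from the paper's mainly in what is proved versus what is cited.

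\textbf{The paper's route.} It identifies the torus with $(S^1)^{2g}$ and uses the Weyl criterion to reduce equidistribution of $(n_i y)$ to equidistribution of each one-dimensional projection $(n_i\, k\cdot y)$. It then invokes Weyl's metric theorem: for distinct integers $n_i$, the sequence $(n_i t)$ is equidistributed mod $1$ for Lebesgue-a.e.\ $t$. Finally it chooses $y \in Y$ outside the countably many bad preimages.

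\textbf{Your route.} You reprove a weaker, ``block'' version of that metric theorem directly for a smooth measure $\mu$ on $Y$. The ingredients are Riemann--Lebesgue decay of $\hat\mu$ along each direction $k$, an $L^2(\mu)$ estimate on the Weyl sums, and Borel--Cantelli along a sparse sequence $N_m$.

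\textbf{What each buys.} The paper's route is shorter. It also gives the stronger conclusion that the whole sequence $(n_i y)$ is equidistributed for almost every $y$. Yours is self-contained and uses only Fourier decay. The price is that you control only the empirical measures at the times $N_m$, but that is all the lemma needs.

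\textbf{What your Step 1 adds.} It supplies exactly the point the paper leaves implicit. The paper only says that the image of $Y$ under each projection has positive measure. That alone does not let one avoid countably many null sets at once. What is actually needed is that $\ell_k|_Y$ pulls back Lebesgue-null sets to $\mu$-null sets, i.e.\ the absolute continuity of $(\ell_k)_*\mu$ that you establish. With that in hand you could skip Steps 2--3 and apply Weyl's theorem directly, recovering the paper's full-equidistribution statement.
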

(For other applications of related equidistribution techniques to the arithmetic of the Zilber-Pink conjecture we refer to \cite{zbMATH05657090}.)
\begin{proof}
We may identify $\mathbb{C}^{g}$ with $ \mathbb{R}^{2g}$ and, after scaling $\Lambda$, reduce to considering the analogous question for the product $(S^{1})^{2g}$ obtained by identifying the sides of the unit square $[0,1]^{2g}$. It suffices to show that, after replacing $y$ with another point of $Y$, the sequence $n_{i} y$ is equidistributed in $(S^{1})^{2g}$ with respect to the standard Haar measure. Applying \cite[Cor. 1.1.3]{zbMATH06110460} it suffices to prove that the sequences $k \cdot n_{i} y$ are equidistributed in $S^{1}$ for each $k \in \mathbb{Z}^{2g} \setminus \{ 0 \}$. Since $Y$ does not lie inside a $\mathbb{Q}$-linear subspace of $\mathbb{R}^{2g}$ by assumption, the image of $Y$ inside $S^{1}$ under each such projection has positive measure. On the other hand by the metric equidistribution theorem of Weyl \cite{zbMATH02610490} \cite[Thm. 4.3, Ch I]{zbMATH03440485}, the subset of $z \in S^{1}$ for which the sequence $k \cdot n_{i} z$ is not equidistributed has measure zero. It follows that one can choose $y \in Y$ which does not land inside any such subset, and therefore the sequence $n_{i} y$ is equidistributed in $(S^{1})^{2g}$.
\end{proof}

\begin{lem}
\label{tausigmaseqconstrlem}
For any neighbourhood $U \subset \mathbb{C}^{g}$ of $x$, there exists a sequence of embeddings $\tau_{i} : D \to \mathbb{C}^{g}$, sending $0$ to $y$, with the following properties:
\begin{itemize}
\item[-] the image of $D$ is locally closed and contained in $X$;
\item[-] the maps $\sigma_{i} := [n_{i}] \tau_{i} + a_{i}$ give elements of $\textrm{Emb}(D,U)$; 
\item[-] the maps $\sigma_{i}$ converge to a parameterization of $L_{x} \cap U'$, with $U' \subset U$ some compact neighbourhood of $x$, and such that $d\sigma_{i}(e_{j}) \to v_{j}$ for each $j = 1, \hdots, d$.
\end{itemize}
\end{lem}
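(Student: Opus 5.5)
The plan is to build the $\tau_{i}$ by rescaling a single fixed holomorphic chart of $Y$ at $y$. Zooming in by the factor $n_{i}$ then exactly cancels the expansion caused by $[n_{i}]$, and the limit becomes the affine-linear tangent disk. Note that the first bullet should read that the image of $\tau_{i}$ is contained in $Y$, not $X$, since $\sigma_{i}$ is meant to parameterize an open piece of a translate of $[n_{i}]Y$.

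\textbf{Setup.} Since $Y$ is smooth of dimension $d$ at $y$, fix a holomorphic embedding $\varphi : D_{r} \to Y$ from the closed $d$-disk of some radius $r>0$, with $\varphi(0)=y$. Put $w_{j} := d\varphi_{0}(e_{j})$; these span $T_{y}Y$, which as a linear space equals $T_{x}L_{x}$. Since the $v_{j}$ only need to generate $T_{x}L_{x}$, we may choose them (when they are fixed in the proof of \autoref{firstredcprop}) to be $v_{j} = \varepsilon w_{j}$. Here $\varepsilon>0$ is chosen so that $x + \varepsilon\, d\varphi_{0}(D) \subset U$, and we take $U'$ to be the image of this closed disk, a compact neighbourhood of $x$ in $L_{x}$. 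Because $\{n_{i}\}$ is an infinite set of integers, after passing to a subsequence (compatible with \autoref{findniailem}) we have $|n_{i}| \to \infty$.

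\textbf{Construction.} Define
\[ \tau_{i}(z) := \varphi\!\left(\varepsilon z / n_{i}\right), \qquad \sigma_{i}(z) := n_{i}\,\varphi\!\left(\varepsilon z/n_{i}\right) + a_{i}, \]
for $z \in D$. This makes sense once $|n_{i}| \geq \varepsilon/r$. Each $\tau_{i}$ is an embedding with $\tau_{i}(0)=y$ and image in $Y$. Each $\sigma_{i}$ is an embedding onto an open piece of $[n_{i}]Y + a_{i}$, because $[n_{i}]$ is an invertible linear map of $\mathbb{C}^{g}$.

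\textbf{Convergence.} Taylor-expand $\varphi$ at $0$:
\[ \sigma_{i}(z) = \bigl(n_{i} y + a_{i}\bigr) + \varepsilon\, d\varphi_{0}(z) + R_{i}(z). \]
Every $k$-th derivative of $z \mapsto n_{i}\varphi(\varepsilon z/n_{i})$ equals $\varepsilon^{k} n_{i}^{1-k}\varphi^{(k)}(\varepsilon z/n_{i})$. For $k \geq 2$ this tends to $0$ uniformly on the compact set $D$, and the bound $|R_{i}| \le C\varepsilon^{2}/|n_{i}|$ controls the $C^{0}$ and $C^{1}$ parts. Combined with $n_{i}y + a_{i} \to x$ from \autoref{findniailem}, this gives convergence in the $C^{\infty}$ topology on $D$:
\[ \sigma_{i} \longrightarrow \sigma_{\infty}, \qquad \sigma_{\infty}(z) = x + \varepsilon\, d\varphi_{0}(z). \]
The limit $\sigma_{\infty}$ is an embedding parameterizing $L_{x} \cap U'$. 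Moreover $d\sigma_{i}(e_{j}) = \varepsilon\, d\varphi_{\varepsilon z/n_{i}}(e_{j})$, so at $z=0$ this equals $\varepsilon w_{j} = v_{j}$, and it tends to $v_{j}$ uniformly in $z$.

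\textbf{Main obstacle.} The delicate point is ensuring that the $\sigma_{i}$ genuinely lie in $\textrm{Emb}(D,U)$, i.e. that their images land in $U$. By the choice of $\varepsilon$, the limit image lies in the interior of $U$. Uniform convergence then gives $\sigma_{i}(D) \subset U$ for $i$ large, and we discard the finitely many earlier terms. Injectivity and immersivity are automatic here. Alternatively, they follow because embeddings form an open set in the (weak $=$ strong) topology of \cite{zbMATH03555096}.
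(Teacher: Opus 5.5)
Your proof is correct and is essentially the paper's argument. The paper sets $\tau_i=\tau_1\circ\eta_i$, where $\eta_i$ is a shrinking self-map of $D$ with first-order part chosen so that $n_i\,d\tau_i(e_j)\to v_j$; your choice $\eta_i(z)=\varepsilon z/n_i$ is the linear instance of this (so the paper's higher-order term $\eta_{i,2}$ vanishes), and you obtain containment in $U$ by the same uniform-convergence argument. Your two adjustments are both well taken: the image of $\tau_i$ should lie in $Y$ rather than $X$, and scaling $v_j=\varepsilon w_j$ secures $x+\varepsilon\,d\varphi_0(D)\subset U$, which the paper's ``shrinking $B$'' does not by itself guarantee.
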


\begin{proof}
Because $Y$ is smooth at $y$, we can choose a closed neighbourhood $B$ at $y$ small enough so that the pair $(B \cap Y, B)$ is biholomorphic to $(D, V)$ via a biholomorphism $P : V \xrightarrow{\sim} B$, where $V \subset \mathbb{C}^{g}$ is the closed unit ball and $D$ is a closed $d$-dimensional subdisk. Shrinking $B$ if necessary, we may assume that $[n_{1}] P(D) + a_{1} \subset U$. We obtain our first map $\tau_{1}$ as the restriction of $P$ to $D$. 

Let $u_{1i}, \hdots, u_{di}$ be a sequence of vectors which span $T_{y} Y$ and such that $d[n_{i}] u_{ji} \to v_{j}$ for each $j$. We may consider holomorphic maps $\eta_{i} : D \to D$ such that $[dP]_{0} \circ [d\eta_{i}]_{0}$ sends the standard unit vectors in $D$ to $u_{1i}, \hdots, u_{di}$. We then define $\tau_{i} = \tau_{1} \eta_{i}$. We let $Q : T_{0} D \to T_{x} \mathbb{C}^{g}$ be the linear map which sends $e_{j}$ to $v_{j}$ for $j = 1, \hdots, d$, regarded as a map $\mathbb{C}^{g} \to \mathbb{C}^{g}$. Shrinking $B$ if necessary, we may assume that $Q(D) \subset U$. 

That the $\sigma_{i}$ converge to a parameterization of $L_{x}$ to $0$'th and $1$'st order is by construction, so it suffices to argue two things:
\begin{itemize}
\item[(1)] the higher-order derivatives of the $\sigma_{i}$ converge to zero as $i \to \infty$; and
\item[(2)] the maps $[n_{i}] \tau_{i} + a_{i}$ factor through $U$. 
\end{itemize}
As we did not specify any conditions on the higher-order derivatives of the $\eta_{i}$, we are still free to adjust them as necessary to satisfy both conditions. For this purpose write $\eta_{i} = \eta_{i,1} + \eta_{i,2}$, with $\eta_{i,1} : D \to D$ the linear approximation to $\eta_{i}$ at $0$, and likewise write $P = P_{2} + P_{1}$. One then has that
\[ P \circ \eta = (P_{2} + P_{1}) \circ (\eta_{i,2} + \eta_{i,1}) = P_{2} \circ \eta_{i,2} + P_{1} \circ \eta_{i,2} + P_{2} \circ \eta_{i,1} + P_{1} \circ \eta_{i,1} .  \]
We can ensure the terms $P_{2} \circ \eta_{i,2}$ and $P_{1} \circ \eta_{i,2}$ converge to $0$ by taking $\eta_{i,2} \to 0$, and the term $P_{1} \circ \eta_{i,1}$ has already been fixed by the condition on $[dP]_{0} \circ [d\eta_{i}]_{0}$. To show (1), it therefore suffices to show that $P_{2} \circ \eta_{i,1} \to 0$. Since $P_{2}$ is fixed, this follows from the claim that $\eta_{i,1} \to 0$, and this follows from the fact that $n_{i} P_{1} \circ \eta_{i,1}$ is bounded as $n_{i} \to \infty$.

To show (2), observe $Q(D) \subset U$ by construction, and $\sigma_{i} \to Q$. Since $D$ is compact, eventually $(Q - \sigma_{i})(D)$ lies an arbitrarily small neighbourhood $\mathcal{N}$ of $0 \in \mathbb{C}^{g}$. Then using that $Q(D)$ is compact in the open neighbourhood $U$, eventually $Q(D) - \mathcal{N}$ also lies in $U$, hence $\sigma_{i}(D) \subset Q(D) - (Q - \sigma_{i})(D) \subset U$. 
\end{proof}

\begin{rem}
A similar argument can be used to prove the following related statement:
\begin{quote}
Let $A$ be a simple abelian variety, $Z \subset A \times A$ a subvariety of dimension $\dim A$ which intersects a generic subvariety $\{ a \} \times A$ transversely. Then given any infinite set $\{ n_{i} \}_{i \in \N}$ the intersections $Z \cap \Gamma_{n_{i}}$, with $\Gamma_{n_{i}} \subset A \times A$ the graphs of multiplication-by-$n_{i}$, are dense in $Z$. 
\end{quote}
The argument is similar except that one produces a sequence of germs in $\mathbb{C}^{g} \times \mathbb{C}^{g}$ converging to ``vertical'' linear subspaces $\{ a \} \times \mathbb{C}^{g}$ which intersects lifts of germs of $Z$. 
\end{rem}

\section{Atypical intersections and uniformity}\label{sec4}

In this section, assuming background from the theory of variations of mixed Hodge structures, we prove the uniform geometric Zilber-Pink conjecture for integral admissible graded-polarizable integral variation of mixed Hodge structures (VMHS from now on). The main result of the section, which we believe to be of independent interest, is \Cref{rhounigeoZP}---a special case of which will be needed for the uniform part of  \Cref{mainthmB} (1), as we will show in \Cref{sec5}. 

Readers interested only in the case of abelian varieties may replace the arbitrary VMHS appearing below by the special case arising from the universal Poincar\'e torsor, as constructed in \S\ref{periodtorsdefforA}. The necessity of working with families of abelian varieties, rather than with a fixed abelian variety $A$, stems from our requirement that the constant $C$ appearing in \Cref{mainthmB}(1) depends only on $\dim A$. 

We freely use the language of our earlier paper \cite{2024arXiv240616628B}; see in particular \S4 in \emph{op. cit.}.
 See also \cite[\S4]{zbMATH07206871} for an introduction to pure and mixed Hodge theory and its intimate relationship with families of abelian varieties. 

\subsection{Proof of the uniform geometric Zilber-Pink}
\label{unifgeozpsec}

We give an uniform version of our geometric mixed Zilber-Pink \cite[Thm. 1.10 and Thm. 7.1]{2024arXiv240616628B} which we recall here (proven first in \cite{2021arXiv210708838B}, in the pure case):

\begin{thm}\label{mixedgeomZP}
Let $(S,\V)$ be a VMHS. There are only finitely many families of maximal monodromically atypical subvarieties of $S$ for $\V$.
\end{thm}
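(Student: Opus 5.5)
This statement is the authors' own geometric Zilber--Pink theorem \cite[Thm. 1.10 and Thm. 7.1]{2024arXiv240616628B}, recalled here as a black box. The natural plan is therefore to reproduce the structure of that argument rather than to find a new proof.

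\textbf{Setup.} Fix a base point and let $\mathbf{G}$ be the algebraic monodromy group of $\V$ on $S$. Consider the mixed period map
\[ \varphi : S^{\mathrm{an}} \to \Gamma \backslash D, \]
where $D$ is the relevant mixed period domain for the graded-polarized data. A subvariety $Z \subset S$ is \emph{monodromically atypical} if it is an irreducible component of $\varphi^{-1}(\varphi(S)\cap \Gamma\backslash D_{\mathbf{H}})$ for some weak Mumford--Tate domain $D_{\mathbf{H}}$ attached to a normal subgroup $\mathbf{H}=\mathbf{H}_Z$ of a Mumford--Tate group, whose dimension exceeds the expected dimension
\[ \dim \varphi(S) + \dim D_{\mathbf{H}} - \dim D. \]
The first step is to reduce to finitely many possibilities for the pair $(\mathbf{H}_Z,\ \text{ambient weak Mumford--Tate domain})$ up to $\Gamma$-conjugacy. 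One shows that the algebraic monodromy groups of weakly special subvarieties of $S$ fall into finitely many $\Gamma$-conjugacy classes; for abelian varieties these are the abelian subvarieties. This uses that such groups are normal in Mumford--Tate groups of points, together with finiteness of Mumford--Tate groups up to conjugacy, in the style of André and Deligne.

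\textbf{Parameter space and definability.} For each conjugacy class, the translates $g D_{\mathbf{H}}$ are parametrized by an algebraic variety $\check{D}/N(\mathbf{H})$. We then form the incidence locus
\[ \mathcal{I}=\{(s,g) : \text{the germ of } \varphi \text{ at } s \text{ lies in } gD_{\mathbf{H}} \text{ with excess dimension}\}. \]
By the definability of mixed period maps in $\mathbb{R}_{\mathrm{an},\exp}$ (Bakker--Brunebarbe--Klingler--Tsimerman, and the mixed generalization), this locus is definable once we restrict to a fundamental set. Upper semicontinuity of intersection dimension makes the excess condition definable, and maximality is a definable condition as well. Hence the maximal atypical loci arise from finitely many definable families.

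\textbf{Algebraicity.} The key step is to show that each such family is algebraic in $S$. I would apply the Ax--Schanuel theorem for VMHS (Gao--Klingler, Chiu): an atypical intersection component must lie in a weakly special subvariety, which is algebraic. Combining this with definable Chow (Peterzil--Starchenko), the definable family of maximal atypical components, being a family of complex analytic subvarieties of $S$, is algebraic. This yields finitely many algebraic families.

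\textbf{Main obstacle.} I expect the hardest part to be handling the mixed, non-reductive monodromy, specifically controlling the unipotent radical when passing to weakly special subvarieties. The monodromy-group analogue of "weak Mumford--Tate domain" must be well defined, with finitely many types up to $\Gamma$. I would first attempt to quotient by the weight filtration and treat the graded pieces, then handle the extension data via the torsor structure, as in the Poincaré bundle construction of \S\ref{periodtorsdefforA}.
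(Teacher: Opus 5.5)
Your first step is false, and it is the only place your argument obtains finiteness. You claim that the algebraic monodromy groups of weakly special subvarieties fall into finitely many $\Gamma$-conjugacy classes, and that for abelian varieties these correspond to abelian subvarieties. A counterexample comes from the paper itself. Take $S=A\times A$ with $A$ simple and $\V^{\vee}$ the Poincar\'e-torsor local system of \S\ref{periodtorsdefforA}.

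The graphs $\Gamma_n$ of $[n]$, $n\in\Z$, are infinitely many abelian subvarieties, hence weakly special. Their algebraic monodromy groups are pairwise distinct: they are the vector groups on $\{(v,nv)\}\subset H_1(A,\mathbb{Q})^{2}$. Since $\pi_1(A\times A)$ is abelian, $\Gamma$-conjugacy identifies none of them. They are not even conjugate under the generic Mumford--Tate group, because its unipotent radical is commutative and its reductive part acts diagonally, so it preserves each $\{(v,nv)\}$. Hecke correspondences $T_n\subset\mathcal{A}_1\times\mathcal{A}_1$ show the same phenomenon in the pure case.

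So finiteness of types is not available a priori. That only finitely many of these types meet $S$ in maximal atypical components is precisely the content of the theorem; the proof of \Cref{abthmsimplecase}(1), where only finitely many $\lambda$ survive, shows this in miniature.

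Your later steps do not repair this. An incidence locus over a real parameter space of translates $gD_{\mathbf{H}}$ is definable, but most parameters are not $\mathbb{Q}$-rational, and the corresponding intersections are not weakly special. Knowing that ``maximal atypical'' is a definable condition, and then invoking definable Chow, neither isolates the weakly special fibres nor bounds how many occur.

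What is missing is the countability step of the o-minimal proofs (e.g.\ \cite{2021arXiv210708838B} in the pure case). It runs as follows:
\begin{enumerate}
\item[(a)] Parametrize the candidate monodromy groups by a real-algebraic family. In the mixed case the unipotent part must be allowed to vary, e.g.\ through a real Grassmannian, since conjugation does not move it.
\item[(b)] Form the definable set of parameters for which the lifted period map, on a fundamental set, meets the corresponding domain in a maximal atypical component.
\item[(c)] Use Ax--Schanuel for VMHS to show that every such parameter is $\mathbb{Q}$-rational.
\item[(d)] Conclude that this definable set is countable, hence finite.
\end{enumerate}

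With that step added, your route would work, but it differs from the one the paper relies on, \cite[\S7]{2024arXiv240616628B}, whose structure is replayed in the proof of \Cref{rhounigeoZP}. There one works algebraically on the period torsor $P$, with its flat foliation and the map $r:P\to\check D$. One shows that the loci $\mathcal{Y}(e)\setminus\mathcal{K}(e)$ are constructible and equal a countable union of constructible pieces, indexed by the countably many families of weakly specials. Finiteness then follows because such a countable union is already a finite one. This constructible formulation, worked fibrewise over $T$, is what the paper upgrades to the uniform statement \Cref{rhounigeoZP}.
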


 We fix $\overline{S}$ a compactification of $S$ and an ample line bundle $\mathcal{L}$. Using this we can speak of degrees of subvarieties of $S$ (in the sense of \autoref{degreesubdef}), and consider a universal family 
\begin{displaymath}
\widetilde{f} : \widetilde{\mathcal{X}} \to \textrm{Var}(S)
\end{displaymath} 
  of subvarieties of $S$ as described at the end of \S\ref{sec:deg}. The base $\textrm{Var}(S)$ has countably infinitely many connected components, each of finite type over $\mathbb{C}$, and for each $d$ there exists a finite subset of these components whose union is $\textrm{Var}(S)_{d}$ such that every subvariety of degree at most $d$ appears as a fibre over the components in this subset. When we speak of a ``universal family'' of subvarieties of degree at most $d$ in $S$, we mean to fix such a family $f : \mathcal{X} \to \mathcal{H} := \textrm{Var}(S)_{d}$ obtained by restricting $\widetilde{f}$. 

Let $(S,\V)$ be an admissible graded-polarizable $\Z$VMHS, and fix an algebraic map $\rho : S \to T$.

\begin{defn}
\label{rhofamdef}
A $\rho$-\emph{family of subvarieties of} $S$ is a pair of maps $E \xleftarrow{a} Y \xrightarrow{\iota} S$ of algebraic varieties such that for each closed point $e \in E$ the map $\iota_{e} : Y_{e}=a^{-1}(e) \to S$ induced by $\iota$ is a closed embedding into some fibre of $\rho$. 
\end{defn}

\begin{defn}
\label{rhowspdef}
A subvariety $Y \subset S$ is called $\rho$-\emph{weakly special} if it is contained in some fibre $S_{t}$ of $\rho$ and is weakly special for the  $\Z$VMHS $(S_{t}, \restr{\V}{S_{t}})$. It is said to be $\rho$-\emph{(monodromically) atypical} if it is $\rho$-(monodromically) atypical for the $\Z$VMHS $(S_{t}, \restr{\V}{S_{t}})$. 
\end{defn}

The main result of the section is the following; its proof refines and complements the one of \Cref{mixedgeomZP}.
\begin{thm}[Uniform geometric Zilber-Pink for VMHS]
\label{rhounigeoZP}
There exists a set of finitely many $\rho$-families of subvarieties of $S$ whose fibres are exactly the maximal monodromically atypical $\rho$-weakly special subvarieties of $S$.
\end{thm}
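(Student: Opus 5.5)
The plan is to run the proof of \Cref{mixedgeomZP} from \cite{2024arXiv240616628B} in families over $T$, keeping track of the parameter $t$ at each stage, and then to extract the maximal elements by a constructibility argument.

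Recall the shape of the non-uniform argument. One considers the period map, or equivalently the local system $\V$ together with its monodromy data. Monodromically atypical weakly special subvarieties arise as components of intersections of $S$ with weakly special loci attached to finitely many types of algebraic monodromy subgroups $\mathbf{H}$ (up to conjugacy). For each such type, the atypical weakly specials of that type are exactly the fibres of an algebraic family. This family is obtained from a definable (o-minimal) and complex-analytic construction, combined with Bakker--Klingler--Tsimerman style algebraicity, and the relevant loci form constructible subsets of an appropriate $\textrm{Var}(S)$. Finiteness of the types comes from a bound on the degrees of the relevant subvarieties.

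\textbf{Step 1 (uniform type finiteness).} Show that as $t$ ranges over $T$, the restricted variations $(S_t,\restr{\V}{S_t})$ have only finitely many possible generic Mumford--Tate/monodromy data up to the obvious equivalence. The approach is to stratify $T$ into finitely many locally closed pieces over which $\rho$ is topologically locally trivial (Verdier's generic triviality, applied inductively on $\dim T$). On each stratum the monodromy representations of the fibres are locally constant, so the list of candidate subgroups $\mathbf{H}$ (up to conjugacy) attached to atypical weakly specials of a fibre is finite. This in turn uses the finiteness of the relevant conjugacy classes of $\mathbb{Q}$-subgroups already established in the proof of \Cref{mixedgeomZP}.

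\textbf{Step 2 (uniform degree bound, the main obstacle).} Prove that there is $d$ such that every monodromically atypical $\rho$-weakly special subvariety has $\deg_{\mathcal{L}} \le d$. Here I would use the definable-period-map approach of \cite{2024arXiv240616628B}: the weakly specials of type $\mathbf{H}$ in $S_t$ are components of $S_t \cap \Psi^{-1}(\text{orbit})$. They are therefore cut out, uniformly in $t$ and in the orbit parameter, by a single definable family over a definable base. Definable families have uniformly bounded complexity, so the degrees are bounded by o-minimal volume/degree bounds, or alternatively by definable Chow. The delicate point is that the period map must be definable over the whole of $S$ and not just fibrewise; this holds since $\V$ is admissible on $S$.

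\textbf{Step 3 (extracting the families).} Fix the universal family $f:\mathcal{X}\to\mathcal{H}=\textrm{Var}(S)_d$. I will show that the set $\mathcal{H}^{\mathrm{ws}}\subset\mathcal{H}$ of points $h$ such that (i) $\mathcal{X}_h$ lies in a fibre of $\rho$, (ii) $\mathcal{X}_h$ is weakly special there, and (iii) $\mathcal{X}_h$ is monodromically atypical is constructible. Condition (i) is algebraic. For (ii) and (iii), the algebraic monodromy dimension is lower semicontinuous in flat families after stratification. Weakly specialness is a maximality condition, which can be checked within degree $\le d$ by Step 2, and so becomes constructible. The maximal-under-inclusion ones are then obtained by removing the image of the constructible incidence set $\{(h,h'): \mathcal{X}_h\subsetneq\mathcal{X}_{h'}\}$ projected to the first factor. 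Stratify the resulting constructible set into finitely many locally closed subvarieties $E_j$ and take $Y_j := f^{-1}(E_j)$ with $a$ and $\iota$ the natural maps. These are $\rho$-families whose fibres are exactly the maximal monodromically atypical $\rho$-weakly specials.
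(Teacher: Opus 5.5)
\textbf{The gap is Step 2, and Step 3 rests entirely on it.}

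\emph{Step 2 is false as stated, even for a single fibre.} Take $T$ a point, $E$ an elliptic curve, $W\subset E^{2}$ a curve not contained in a translate of an elliptic curve, and $S=E^{2}\times W\subset E^{4}$ with $\V$ pulled back from the tautological variation of \S\ref{periodtorsdefforA}. Let $\Gamma_{n}\subset E^{2}$ be the graph of $[n]$ and $w\in W$. Then $\Gamma_{n}\times\{w\}$ is weakly special in $S$, since no strictly larger subvariety of $S$ has monodromy $H_{1}(\Gamma_{n},\mathbb{Q})$. It is also monodromically atypical, since $\codim_{S}(\Gamma_{n}\times\{w\})=2<3=\codim_{E^{4}}(\Gamma_{n}\times\{w\})$. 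But its degree grows like $n^{2}$. Only the maximal ones, here the surfaces $E^{2}\times\{w\}$, have bounded degree. A uniform bound on those is essentially the theorem itself; the paper deduces it afterwards, as \Cref{rhounigeoZPdegver}.

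\emph{Step 1 does not help.} The groups $H_{1}(\Gamma_{n},\mathbb{Q})$ are pairwise distinct subgroups of an abelian monodromy group. So the types are finite at best up to Mumford--Tate conjugation, and that conjugation does not preserve degree.

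\emph{The mechanism you invoke also fails.} The lines $\{(z,\lambda z)\}$, $\lambda\in\C$, cut with a fundamental domain of $E\times E$, form a single $\R_{\mathrm{an}}$-definable family. Yet the Zariski closures of their images are $\Gamma_{n}$ for $\lambda=n$, and all of $E\times E$ for generic $\lambda$. Uniform finiteness bounds degrees in definable families of \emph{closed algebraic} sets. A weakly special, however, is seen on a fundamental set only through a piece whose Zariski closure involves infinitely many lattice translates. This is exactly the Pila--Wilkie phenomenon, not a source of degree bounds.

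\emph{Consequence for Step 3.} Step 3 would go through if you had a uniform degree bound for the maximal atypical ones, because maximality and weak-specialness could then be tested inside $\textrm{Var}(S)_{d}$. But producing that bound is the whole problem.

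\textbf{What is missing.} You need a mechanism that isolates the maximal atypical $\rho$-weakly specials and gets finiteness from constructibility in a finite-type parameter space, not from degrees. The paper does this with the period torsor $P\to S$, in the following steps.
\begin{enumerate}
\item It stratifies as in your Step 1, using Noetherian induction and Verdier, so that all fibres of $\rho$ share a common normal monodromy group $\mathbf{N}$.
\item It builds an algebraic family $\mathcal{Z}\to\mathcal{Y}=T\times\mathcal{I}(\mathcal{C})$. Its fibres are components of intersections of fibres of $P\to T$ with $r^{-1}$ of intersections of weakly special domains in $\ch{D}$.
\item It shows that $\mathcal{Y}(e)\setminus\mathcal{K}(e)$ is constructible.
\item Using intersections with flat leaves, it shows that the points of this locus produce $e$-dimensional strict $\rho$-weakly specials, and that every maximal atypical one arises this way (\Cref{surjectontogermprop}, \Cref{inducepointlem}).
\item It covers this locus by countably many constructible sets coming from families $h_{i}$ of atypical $\rho$-weakly specials, and \cite[Lem. 3.13]{2024arXiv240616628B} extracts finitely many.
\end{enumerate}
An o-minimal route would instead need a fibrewise-uniform version of the Ax--Schanuel plus ``definable and countable implies finite'' argument. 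Definability alone does not supply it.
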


Note that one obtains as a corollary:

\begin{cor}
\label{rhounigeoZPdegver}
Let $(S,\V)$ be an admissible graded-polarizable $\Z$VMHS. Then there exists $d = d(b)$ with the following property: for any subvariety $X \subset S$ of degree $\leq b$, the maximal monodromically weakly special subvarieties of $(X, \restr{\V}{X})$ have degree at most $d$.
\end{cor}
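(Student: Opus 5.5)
The plan is to deduce the corollary from \Cref{rhounigeoZP} applied to an auxiliary VMHS on the universal family of bounded-degree subvarieties. Fix $b$. Let $f : \mathcal{X} \to \mathcal{H} := \textrm{Var}(S)_{b}$ be the universal family of subvarieties of $S$ of degree at most $b$, as described at the start of \S\ref{unifgeozpsec}. It comes with the tautological map $\iota : \mathcal{X} \to S$. We equip $\mathcal{X}$ with the pulled-back variation $\iota^{*}\V$. Pullbacks of admissible graded-polarizable $\Z$VMHS along algebraic maps remain admissible graded-polarizable, so $(\mathcal{X}, \iota^{*}\V)$ is again a $\Z$VMHS. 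We take $\rho := f : \mathcal{X} \to \mathcal{H}$. Replacing $\mathcal{X}$ by a stratification (or normalization) if necessary, we may assume it is a finite union of varieties to which \Cref{rhounigeoZP} applies component by component.

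With this setup, for a subvariety $X \subset S$ of degree $\leq b$ there is a point $h \in \mathcal{H}$ with $\mathcal{X}_{h} \xrightarrow{\sim} X$ via $\iota$, and under this identification $\restr{\iota^{*}\V}{\mathcal{X}_{h}} = \restr{\V}{X}$. By \Cref{rhowspdef}, the $\rho$-weakly special (respectively $\rho$-monodromically atypical) subvarieties of $\mathcal{X}$ contained in $\mathcal{X}_{h}$ are exactly the weakly special (respectively atypical) subvarieties of $(X, \restr{\V}{X})$. Applying \Cref{rhounigeoZP}, we obtain finitely many $\rho$-families $E_{j} \xleftarrow{a_{j}} Y_{j} \xrightarrow{\iota_{j}} \mathcal{X}$ whose fibres are exactly the maximal monodromically atypical $\rho$-weakly special subvarieties of $\mathcal{X}$. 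In particular they include those of every $(X, \restr{\V}{X})$. If the corollary is meant to cover all maximal weakly specials and not only atypical ones, the typical maximal weakly special is $X$ itself, of degree $\leq b$, so it is harmless.

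It remains to bound degrees in a finite collection of algebraic families. The composite maps $\iota \circ \iota_{j} : Y_{j} \to S$ restrict on each fibre $a_{j}^{-1}(e)$ to a closed embedding into some fibre $\mathcal{X}_{h}$, followed by the isomorphism onto a subvariety of $S$. The degree of the closure in $(\overline{S}, \mathcal{L})$ of the image of a fibre in an algebraic family is a constructible function on the base. Concretely, we take closure of the image of $Y_{j}$ in $E_{j} \times \overline{S}$, stratify $E_{j}$ so that the family becomes flat, and observe that the Hilbert polynomial, hence the degree, is locally constant on each stratum. Since each $E_{j}$ is of finite type, this yields finitely many values, and we let $d$ be their maximum (and at least $b$). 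The only delicate point I expect is this last step: one must pass to closures in $\overline{S}$ and handle non-flatness by Noetherian induction on $E_{j}$. The constant $d$ depends only on $b$ (and the fixed data $S, \overline{S}, \mathcal{L}, \V$), as required.
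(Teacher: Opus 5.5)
Your proposal is correct and follows essentially the same route as the paper. Both arguments pull $\V$ back to the universal family $f : \mathcal{X} \to \mathcal{H}$ of degree-$\leq b$ subvarieties, apply \Cref{rhounigeoZP} with $\rho = f$, and bound degrees across the finitely many resulting $\rho$-families. Your flattening and Noetherian-induction argument for that last degree bound supplies detail the paper only asserts.
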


\begin{proof}[Proof of \Cref{rhounigeoZPdegver}]
Let $f : \mathcal{X} \to \mathcal{H}$ be a universal family of subvarieties of degree at most $b$. Then this family comes with a map $\mathcal{X} \to S$, and pulling back $\V$ under this map we may apply \autoref{rhounigeoZP} with $\rho = f$. Then the fibres of the finitely many algebraic families of subvarieties given to us by \autoref{rhounigeoZP} will have fibres of degree bounded by some integer $d$.
\end{proof}

\begin{proof}[Proof of \Cref{rhounigeoZP}]
In what follows we write ``atypical'' for ``monodromically atypical'', since non-monodromic notions of atypicality will not arise.

The first step is to introduce the \emph{period torsor} $P$. The construction proceeds entirely analogously to \S\ref{periodtorsdefforA} as well as \cite[\S2.2]{2024arXiv240616628B} and appearing also in \cite{2022arXiv220805182B}. Let $(\mathcal{V}, \nabla)$ be the flat algebraic vector bundle associated to $\V$ by the Riemann-Hilbert correspondence. We then consider $\mathcal{I} = \textrm{Iso}(\mathcal{V}_{a'}, \mathcal{V}_{a})$ as in loc. cit., and define $P$ as the algebraic closure of the flat leaf corresponding to $\textrm{id}_{a}$, as in loc. cit. Then again by \cite[\S2.2]{2024arXiv240616628B} we find that $P$ is a torsor for the group
\begin{align*}
\mathbf{H} := \overline{\textrm{im}[\pi_{1}(S,a) \to \GL(\mathbb{V}_{a}) \simeq \GL(\mathcal{V}_{a})]}^{\textrm{Zar}} 
\end{align*}
obtained as the Zariski closure of the image of monodromy (which we can assume to be connected, simply by passing to a finite covering of the base). In fact $P$ comes with a $\mathbf{H} (\C)$ flat principal connection, inducing a foliation on $P$.

We make some preliminary reductions. In particular, we may assume that:
\begin{quote}
The fibres of $\rho$ are smooth, connected, all of the same dimension, and their algebraic monodromy groups agree with a fixed $\mathbb{Q}$-normal subgroup $\mathbf{N}$ of $\mathbf{H}$.
\end{quote}
This we can argue by Noetherian induction, progressively shrinking $S$ (either directly or by shrinking $T$) and restricting $(\mathbb{V}, P)$. Because the dimension (resp. number of irreducible components) of the fibres of $\rho$ is a constructible condition on $T$ \cite[\href{https://stacks.math.columbia.edu/tag/05F9}{Lemma 05F9}]{stacks-project} (resp. \cite[\href{https://stacks.math.columbia.edu/tag/055B}{Lemma 055B}]{stacks-project}), we may use Noetherian induction on $T$ to assume the dimension condition and that all fibres have the same number of irreducible components. Similarly, using Noetherian induction on $S$ we may replace $S$ with an open subvariety so that the fibres of $S \to T$ are smooth and irreducible (so in particular connected). Then using \cite[Cor. 5.1]{zbMATH03520733} and Noetherian induction on $T$ again, one can assume that $S \to T$ is a topological fibration. It follows from the homotopy exact sequence 
\[ \pi_{1}(S_{t}) \to \pi_{1}(S) \to \pi_{1}(T) \to 1 \]
that the algebraic monodromy group $\mathbf{H}_{t}$ of each fibre $S_{t}$ is normal in $\mathbf{H}$. Changing $t$ amounts to conjugating $\mathbf{H}_{t}$, so $\mathbf{N} := \mathbf{H}_{t}$ is independent of $t$.

Let $\ch{L}$ be the flag variety of all flags on $\mathcal{V}_{a}$ of the same type as $F^{\bullet}_{a}$, and let $\ch{D}$ be the orbit of $F^{\bullet}_{a}$ under $\mathbf{H}$ in $\ch{L}$. Then $\ch{D}$ is a projective algebraic variety and we have a natural map $r : P \to \ch{D}$, constructed as in \cite[\S4.6]{2024arXiv240616628B}. We then have, as proven in \cite[Lem. 7.2]{2024arXiv240616628B}, that all varieties $\ch{Q} \subset \ch{D}$ such that $\ch{Q}$ is (the compact dual of) a weakly special subdomain of $\ch{D}$ appear as fibres of some common algebraic family $g : \mathcal{D} \to \mathcal{C}$ over a possibly disconnected base of finite type (note that not all fibres of this family need be weakly special domains). It follows that all weakly special subdomains of $\ch{D}$ which lie inside some orbit of $\mathbf{N}$ also belong to some common algebraic family, which we again denote $g$.

Given a set of subvarieties $\mathcal{S}$ of a fixed variety $X$ we write $\mathcal{I}(\mathcal{S}, X)$ for the set of all varieties obtained by taking intersections of elements of $\mathcal{S}$. We now have the following claim, which is analogous to \cite[Prop. 7.4]{2024arXiv240616628B}.
\begin{quote}
\textbf{Claim:} There exists a family $f : \mathcal{Z} \to \mathcal{Y}$ of subvarieties of the period torsor $P \to S$ such that:
\begin{itemize}
\item[(1)] the fibres of $f$ are exactly the irreducible components of the intersections between
\begin{itemize}
\item[-] varieties in the set $r^{-1}(\mathcal{I}(\mathcal{S}, \ch{D}))$, where $\mathcal{S}$ is the set of fibres of $g$; and
\item[-] the fibres of $P \to T$;
\end{itemize}
and
\item[(2)] all $\rho$-weakly special subvarieties $Y$ of $S$ arise as follows: there is a point $y \in \mathcal{Y}(\mathbb{C})$ such that $Y$ is a component of the projection to $S$ of $\mathcal{Z}_{y} \cap \mathcal{L}$ for some horizontal leaf $\mathcal{L}$ of $P$ above $S$.
\end{itemize}
\end{quote}
As in the proof of \cite[Prop. 7.4]{2024arXiv240616628B}, it suffices to prove (1). To do this, one applies \cite[Lem 7.3]{2024arXiv240616628B} to the family $g \circ r$ constructed above and obtain a family $\mathcal{I}(g \circ r)$ over some space $\mathcal{I}(\mathcal{C})$. We then consider the family $f$ over $T \times \mathcal{I}(\mathcal{C})$ whose fibres are the intersections of the fibres of $P \to T$ and the fibres of $\mathcal{I}(g \circ r)$.

Using the claim, we now mimic the proof in \cite[\S7.2]{2024arXiv240616628B}. The notation $f^{(< k)}$ is defined as in that argument, as are the loci $\mathcal{Y}(e), \mathcal{K}(e)'$ and $\mathcal{K}(e)$. Note that we may regard all points in these loci as triples $(x, t, c)$, with $x$ a point of $P$ and $(t,c)$ a point over the base variety $\mathcal{Y} = T \times \mathcal{I}(\mathcal{C})$. Applying \cite[Prop. 3.10]{2024arXiv240616628B} in the same was as in \S7.2 in loc. cit., one sees these loci are constructible. The rest of the proof proceeds by replacing the term ``weakly special'' with $\rho$-weakly special. 

More precisely, the argument of \cite[\S7.2]{2024arXiv240616628B} proceeds by proving \cite[Prop. 7.7]{2024arXiv240616628B} and \cite[Lem. 7.11]{2024arXiv240616628B}, and the analogues of these statements are given by \autoref{surjectontogermprop} and \autoref{inducepointlem} below. After this, we consider a countable collection $\{ h_{i} : C_{i} \to B_{i} \}_{i=1}^{\infty}$ of families which contain all $\rho$-weakly special subvarieties, and such that all fibres of these families are atypical and $e$-dimensional. This is constructed by:
\begin{itemize}
\item[(1)] first constructing a countable collection $\{ h'_{i} : C_{i} \to B_{i} \}_{i=1}^{\infty}$ of families whose fibres is exactly the set of all weakly special subvarieties of $S$ using \cite[Prop. 4.19]{2024arXiv240616628B};
\item[(2)] then for each $h'_{i}$ constructing an associated family $h_{i}$ whose fibres are the irreducible components of intersections of fibres of $h'_{i}$ with fibres of $\rho$;
\item[(3)] refining these families so that the fibres of each $h_{i}$ all have the same dimension; 
\item[(4)] throwing away those families not corresponding to subvarieties of dimension $e$;
\item[(5)] iteratively using \cite[Cor. 5.1]{zbMATH03520733} on the bases $B_{i}$ so that the fibres of each $h_{i}$ have a common algebraic monodromy group; and
\item[(6)] throwing away those families which don't correspond to families of $\rho$-atypical varieties.
\end{itemize}
Note that after step (5), one fibre of some family $h_{i}$ is $\rho$-atypical if and only if they all are.

To conclude, we observe that \autoref{surjectontogermprop} and \autoref{inducepointlem} (proven below) together give $\mathcal{Y}(e) \setminus \mathcal{K}(e) = \bigcup_{i=1}^{\infty} \mathcal{Y}(f^{(< \nu(e))}, e, h_{i})$, where $\nu(e) = e + \dim (\mathbf{N} \cdot F^{\bullet}_{a}) - \dim \rho^{-1}(t)$, where we note that in our setup $\dim \rho^{-1}(t)$ is independent of $t$. Applying \cite[Lem. 3.13]{2024arXiv240616628B} we conclude that there is some $n$ such that $\bigcup_{i=1}^{\infty} \mathcal{Y}(f^{(< \nu(e))}, e, h_{i}) = \bigcup_{i=1}^{n} \mathcal{Y}(f^{(< \nu(e))}, e, h_{i})$, which gives the desired result.
\end{proof}

\begin{prop}
\label{surjectontogermprop}
For each $(x,t,c) \in \mathcal{Y}(e) \setminus \mathcal{K}(e)$, let $U$ be a component at $x$ of $\mathcal{Z}_{y} \cap \mathcal{L}_{x}$ of dimension $\geq e$. Then the projection of $U$ to $S$ surjects onto the germ of a strict $\rho$-weakly special subvariety $Y$ of $S$ (of dimension $e$). 
\end{prop}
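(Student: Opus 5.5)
The plan is to transplant the proof of \cite[Prop. 7.7]{2024arXiv240616628B} to the relative setting, using that every object now lives inside a single fibre $P_{t}$ of $P \to T$. Recall the setup. A point $(x,t,c) \in \mathcal{Y}(e)$ corresponds to $y = (t,c)$. The fibre $\mathcal{Z}_{y}$ is a component of $r^{-1}(\ch{Q}) \cap P_{t}$ for some intersection $\ch{Q}$ of fibres of $g$. We are given a component $U$ at $x$ of $\mathcal{Z}_{y} \cap \mathcal{L}_{x}$ of dimension $\geq e$, and the condition $(x,t,c) \notin \mathcal{K}(e)$ says that this intersection is \emph{exactly} $e$-dimensional and is not explained by a smaller-dimensional family $f^{(<k)}$.

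\textbf{Step 1: reduce to a single fibre.} Since $\mathcal{L}_{x} \cap P_{t}$ is a horizontal leaf of the restricted period torsor over $S_{t}$, the projection $p(U) \subset S_{t}$ is an analytic germ of dimension $e$: the horizontal leaf maps locally biholomorphically to $S$. By the preliminary reductions, $S_{t}$ is smooth and connected with algebraic monodromy $\mathbf{N}$. I would then identify the leaf $\mathcal{L}_{x} \cap P_{t}$ with a leaf of an $\mathbf{N}$-torsor $P'_{t} \subset P_{t}$, namely the period torsor of $(S_{t}, \restr{\V}{S_{t}})$, and check that $r$ restricted to it lands in an $\mathbf{N}$-orbit in $\ch{D}$. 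This is why $g$ was restricted to weakly special domains inside $\mathbf{N}$-orbits.

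\textbf{Step 2: apply Ax--Schanuel in the fibre.} This is the core argument. Let $Y$ be the Zariski closure of $p(U)$ in $S_{t}$, and take its weakly special closure for $\restr{\V}{S_{t}}$. Apply the mixed Ax--Schanuel theorem (as used in \cite[\S7]{2024arXiv240616628B}) to the VMHS on $S_{t}$ with the intersection $U \subset r^{-1}(\ch{Q}) \cap \mathcal{L}_{x}$. There are two outcomes:
\begin{itemize}
\item[(a)] The intersection is typical, which forces $\dim U$ to be at most the expected dimension. This is excluded because $\nu(e) = e + \dim(\mathbf{N}\cdot F^{\bullet}_{a}) - \dim S_{t}$ measures precisely the excess dimension defining $\mathcal{Y}(e)$.
\item[(b)] $p(U)$ lies in a strict weakly special subvariety $Y' \subset S_{t}$ whose monodromy group $\mathbf{N}'$ accounts for the excess.
\end{itemize}

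\textbf{Step 3: show $Y'$ is the right weakly special and has dimension $e$.} In case (b), the intersection $\mathcal{Z}_{y}' = \mathcal{Z}_{y} \cap r^{-1}(\mathbf{N}'\text{-orbit})$ is itself a fibre of $f$, since it is a component of an intersection with one more fibre of $g$ inside $P_{t}$. The non-membership in $\mathcal{K}(e)$, meaning no smaller family $f^{(<\nu(e))}$ produces $U$, then forces $\ch{Q}$ to already be the $\mathbf{N}'$-domain. Consequently $p(U)$ is an open piece of a component of $Y' \cap$ (the leaf projection), and dimension counting gives $\dim Y' = e$. Hence $p(U)$ surjects onto the germ of $Y'$ at $p(x)$. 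Finally, $Y'$ is $\rho$-weakly special by \Cref{rhowspdef}, since it lies in $S_{t}$ and is weakly special there, and it is strict because its monodromy group is strictly smaller.

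\textbf{Main obstacle.} I expect the hardest point to be the bookkeeping in Step 3: verifying that the constructible loci $\mathcal{K}(e)$ defined using the fibrewise dimension $\nu(e)$ exclude exactly the degenerate cases. In particular, the monodromy of the fibre must be compared with $\mathbf{N}$ rather than $\mathbf{H}$, and the normality of $\mathbf{N}$ in $\mathbf{H}$ is what allows the orbit dimension $\dim(\mathbf{N}\cdot F^{\bullet}_{a})$ to be independent of $t$. To handle this, I would first redo the dimension inequalities of \cite[Prop. 7.7]{2024arXiv240616628B} verbatim, with $S$ replaced by $S_{t}$ and $\mathbf{H}$ replaced by $\mathbf{N}$.
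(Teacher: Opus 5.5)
Your overall route is the paper's. The paper's proof is exactly your Step 1 plus a citation:
\begin{itemize}
\item since $\mathcal{Z}_{y}\subset P_{t}$, it regards $\mathcal{Z}_{y}\cap\mathcal{L}_{x}$ inside $P_{t}$, with $\mathcal{L}_{x}$ a leaf of the induced connection;
\item it replaces $\mathcal{Y}(e),\mathcal{K}(e)',\mathcal{K}(e)$ by the loci $\mathcal{Y}(e,t),\mathcal{K}(e,t)',\mathcal{K}(e,t)$ built from $f_{t}=\restr{f}{\{t\}\times\mathcal{I}(\mathcal{C})}$;
\item it runs the argument of \cite[\S7.2]{2024arXiv240616628B} verbatim for $(S_{t},\restr{\V}{S_{t}})$;
\item it notes that an $e$-dimensional weakly special of $S_{t}$ is $\rho$-weakly special by \Cref{rhowspdef}.
\end{itemize}
You also observe that leaves in $P_{t}$ only have Zariski closure an $\mathbf{N}$-subtorsor, which $r$ maps into an $\mathbf{N}$-orbit. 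The paper leaves this implicit, and it is indeed why $g$ was cut down to domains inside $\mathbf{N}$-orbits.

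The part you add beyond the paper, your reconstruction of \cite[Prop.~7.7]{2024arXiv240616628B} in Step 3, has a gap. Let $\ch{Q}'$ be the $\mathbf{N}'$-domain. Since $U$ is already a component of $r^{-1}(\ch{Q}\cap\ch{Q}')\cap\mathcal{L}_{x}$, the condition that no smaller fibre of $f$ produces $U$ gives at most $\ch{Q}\subseteq\ch{Q}'$. Nothing in your argument gives the reverse inclusion. Given $\ch{Q}\subseteq\ch{Q}'$, that reverse inclusion is equivalent to $p(U)$ filling out the germ of $Y'$, which is the very conclusion you want. So the step ``forces $\ch{Q}$ to already be the $\mathbf{N}'$-domain'' is circular.

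What is missing is an argument that $p(U)$ is open in its weakly special closure $Y'$. One way to get it:
\begin{itemize}
\item Apply Ax--Schanuel a second time, inside $Y'$, with group $\mathbf{N}'$ and domain $\ch{Q}'$. Minimality of $Y'$ forces $U$ to be a \emph{typical} component of $r^{-1}(\ch{Q}\cap\ch{Q}')\cap\mathcal{L}_{x}$ in the $\mathbf{N}'$-torsor over $Y'$.
\item Combined with the atypicality of $U$ encoded in membership of $\mathcal{Y}(e)$, this shows that $Y'$ is itself atypical, of dimension at least $\dim U$.
\item The configuration where $U$ sits typically inside a strictly larger atypical $Y'$, or where $\dim U>e$, is exactly what passing to $\mathcal{Y}(e)\setminus\mathcal{K}(e)$ has to rule out. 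Only after that do you get $\dim Y'=\dim U=e$, and hence $p(U)$ equal to the germ of $Y'$.
\end{itemize}
If you simply invoke Prop.~7.7 with $S_{t}$ and $\mathbf{N}$ in place of $S$ and $\mathbf{H}$, as your last paragraph proposes and as the paper does, this issue does not arise.
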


\begin{proof}
Note that, by construction, the variety $\mathcal{Z}_{y}$ lies inside the torsor $P_{t} \to S_{t}$, where we take the fibres over $t$. Therefore the intersection $\mathcal{Z}_{y} \cap \mathcal{L}_{x}$ can be regarded inside $P_{t}$ and where the leaf $\mathcal{L}_{x}$ is the leaf through $x \in P_{t}$ for the induced flat connection on $P_{t}$. If we then replace the loci $\mathcal{Y}(e), \mathcal{K}(e)', \mathcal{K}(e)$, etc., with the corresponding loci $\mathcal{Y}(e,t), \mathcal{K}(e,t)', \mathcal{K}(e,t)$ which are defined in the same way except using the family $f_{t}$ obtained by restricting $f$ to $\{ t \} \times \mathcal{I}(\mathcal{C})$, then we are exactly in the setup of \cite[\S7.2]{2024arXiv240616628B} and the proof proceeds in the same way. The conclusion then shows that $U$ projects onto a weakly special subvariety of $S_{t}$ of dimension $e$, hence a $\rho$-weakly special with the same properties.
\end{proof}

\begin{lem}
\label{inducepointlem}
Each maximal (among atypical) atypical $\rho$-weakly special $Y$ of dimension $e$ induces a point of $\mathcal{Y}(e) \setminus \mathcal{K}(e)$. 
\end{lem}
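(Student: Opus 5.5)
The plan is to follow the proof of \cite[Lem. 7.11]{2024arXiv240616628B}, working fibrewise over $T$ exactly as in the proof of \autoref{surjectontogermprop}. Let $Y$ be a maximal atypical $\rho$-weakly special subvariety of dimension $e$. By \Cref{rhowspdef}, $Y$ lies in a fibre $S_{t}$ and is weakly special for $(S_{t}, \restr{\V}{S_{t}})$.

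\textbf{Step 1: realize $Y$ as a fibre intersected with a leaf.} By part (2) of the Claim in the proof of \Cref{rhounigeoZP}, there is a parameter $y = (t,c) \in \mathcal{Y}(\mathbb{C})$ with $c \in \mathcal{I}(\mathcal{C})$ such that $Y$ is a component of the projection to $S$ of $\mathcal{Z}_{y} \cap \mathcal{L}$, for some horizontal leaf $\mathcal{L}$ of $P$. Concretely, I take $\ch{Q} = \mathbf{H}_{Y}\cdot h F^{\bullet}_{a}$, the weakly special domain attached to $Y$. It lies in an $\mathbf{N}$-orbit because $\mathbf{H}_{Y} \subset \mathbf{N}$, so it is a fibre of $g$. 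Then $\mathcal{Z}_{y}$ is the component of $r^{-1}(\ch{Q}) \cap P_{t}$ through a point $x$ of the lift of $Y$. Since $x$ lies over $Y$, its leaf $\mathcal{L}_{x}$ meets $\mathcal{Z}_{y}$ in a component $U$ projecting onto the germ of $Y$ at $\pi(x)$, of dimension at least $e$. This gives $(x,t,c) \in \mathcal{Y}(e)$.

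\textbf{Step 2: check the dimension count defining $\mathcal{Y}(e)$.} I need the required inequality relating $\dim U$ to $\dim \mathcal{Z}_{y}$ and the codimension, with the shift $\nu(e) = e + \dim(\mathbf{N}\cdot F^{\bullet}_{a}) - \dim \rho^{-1}(t)$. This should follow directly from atypicality of $Y$ in $S_{t}$. Atypicality says $\dim S_{t} - e < \dim (\mathbf{N} F^{\bullet}_{a}) - \dim(\mathbf{H}_{Y} F^{\bullet}_{a})$, and this is exactly the inequality defining membership in $\mathcal{Y}(e)$ for the restricted family $f_{t}$. Because all fibres of $\rho$ have the same dimension and the same monodromy group $\mathbf{N}$ (by the preliminary reductions), this computation is uniform in $t$ and agrees with the one for $f$ itself.

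\textbf{Step 3: show $(x,t,c) \notin \mathcal{K}(e)$.} This is the main obstacle. Points of $\mathcal{K}(e)$ are those where $U$ lies in a strictly larger component, of dimension greater than $e$, of some intersection $\mathcal{Z}_{y'} \cap \mathcal{L}_{x}$ that still satisfies the atypicality inequality for $f^{(<\nu(e))}$. I would argue by contradiction, fibrewise. Any $\mathcal{Z}_{y'}$ containing $x$ has $y' = (t,c')$ with the same $t$, because the fibres of $f$ lie in the fibres of $P \to T$. Hence such a larger component lives in $P_{t}$. Applying \autoref{surjectontogermprop}, or rather its proof via \cite[Prop. 7.7]{2024arXiv240616628B} applied to $f_{t}$, the larger component projects onto a $\rho$-weakly special $Y' \supsetneq Y$ in $S_{t}$. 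The inequality it satisfies makes $Y'$ atypical, which contradicts the maximality of $Y$.

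The delicate point is the identification of the fibrewise loci $\mathcal{Y}(e,t)$ and $\mathcal{K}(e,t)$ with the slices of $\mathcal{Y}(e)$ and $\mathcal{K}(e)$ over $t$. This identification must hold because membership is tested only through intersections inside a single $P_{t}$, and I would verify it carefully from the definitions in \cite[\S7.2]{2024arXiv240616628B}. Once it is in place, the statement reduces verbatim to \cite[Lem. 7.11]{2024arXiv240616628B} for the $\Z$VMHS $(S_{t}, \restr{\V}{S_{t}})$, whose algebraic monodromy group is $\mathbf{N}$.
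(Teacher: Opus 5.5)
Your proposal is correct and follows essentially the same route as the paper. The paper places $Y$ in a fibre $S_{t}$, reduces to showing that $Y$ induces a point of the fibrewise locus $\mathcal{Y}(e,t) \setminus \mathcal{K}(e,t)$ built from $f_{t}$, and then cites the argument of \cite[Lem. 7.11]{2024arXiv240616628B}; your Steps 1--3, including the observation that any $\mathcal{Z}_{y'}$ through $x$ must have the same $t$-coordinate, simply spell out what the paper compresses into ``it suffices''.
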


\begin{proof}
Since $Y$ is $\rho$-weakly special, it lies in some fibre $S_{t}$ of $S \to T$. Using the same procedure as in \autoref{surjectontogermprop} to define $P_{t}$ and the subloci $\mathcal{Y}(e,t), \mathcal{K}(e,t)', \mathcal{K}(e,t)$, it suffices to show that $Y$ induces a point of $\mathcal{Y}(e,t) \setminus \mathcal{K}(e,t)$. We are then exactly in the setup of \cite[Lem. 7.11]{2024arXiv240616628B} and the same argument completes the proof.
\end{proof}

\subsection{End of the proof of \Cref{mainthmB} (1)}\label{sec5}

Let $\pi : \mathbb{A}_{g,b} \to \mathcal{A}_{g,b}$ be the universal family of abelian varieties equipped with a polarization of degree $b$. (Such a family exists on the level of stacks, and, as we could always replace our base $\mathcal{A}_{g,b}$ by a finite cover by adding level structure, we will commit the abuse of treating $\pi$ as an honest algebraic family.) The universal family $\pi$ admits a universal polarization, giving rise to a relative embedding (cf. \cite[Thm. 5.5]{zbMATH06665024}) $\iota : \mathbb{A}_{g,b} \hookrightarrow \mathbb{P}^{N}_{\mathcal{A}_{g,b}}$ for some $N$. We consider the universal family $f : M \to T$ of all degree $\leq d$ subvarieties of $\mathbb{P}^{N}$ and consider the induced family $\mathcal{f} : \mathcal{M} \to T \times \mathcal{A}_{g,b}$ of subvarieties of the fibres of $\pi$ obtained as intersections of between fibres of $f$ and fibres of $\pi$. After possibly stratifying the base $T \times \mathcal{A}_{g,b}$ and removing fibres with several components, we may assume that the family $\mathcal{f}$ contains as its fibres all irreducible subvarieties $X \subset \mathbb{A}_{g,b}$ which lie in a fibre of $\pi$ and have have degree at most $d$ for the polarization on that fibre. Using two copies of such a family, we may then construct a family $\mathcal{f}_{2} : \mathcal{M}_{2} \to T_{2}$ whose fibres consist of all products $Y \times X \subset \mathbb{A}_{g,b} \times_{\mathcal{A}_{g,b}} \mathbb{A}_{g,b}$ which lie in some fibre of $\mathbb{A}_{g,b} \times_{\mathcal{A}_{g,b}} \mathbb{A}_{g,b} \to \mathcal{A}_{g,b}$ and such that $Y$ and $X$ have degree at most some integer $d$. 

We take $\rho = \mathcal{f}_2$ and apply \autoref{rhounigeoZP}, where the variation of mixed Hodge structure is the one pulled back from $\mathbb{A}_{g,b} \times_{\mathcal{A}_{g,b}} \mathbb{A}_{g,b}$; the local system on $\mathbb{A}_{g,b} \times_{\mathcal{A}_{g,b}} \mathbb{A}_{g,b}$ is constructed from a Poincar\'e bundle over $\mathbb{A}_{g,b} \times_{\mathcal{A}_{g,b}} \mathbb{A}_{g,b}$ via a procedure analogous to the one in \S\ref{periodtorsdefforA}. We then learn that there are finitely many $\rho$-families of weakly special subvarieties of $\mathcal{M}_{2}$ such that all atypical weakly special intersections between $Y \times X \subset A \times A$, with $A$ some fixed fibre of $\pi$, arise as a fibre of one of these families. After using \cite[Cor. 5.1]{zbMATH03520733} we may assume each such family is a topological fibration. Moreover each such family induces at most finitely many families of weakly special subvarieties of $Y \times X$, which we may also assume are topological fibrations, and where the number of such families is bounded by a uniform constant $C$.

Let $g$ be one of the families of weakly special subvarieties of $Y \times X$ just mentioned. Since $g$ is a topological fibration, the fibres of $g$ induce a common conjugacy class of subgroups of $\pi_{1}(A \times A)$, and hence a unique such subgroup since $\pi_{1}(A \times A)$ is abelian. Moreover if such a fibre $F$ lies in a non-trivial abelian subvariety $A'$ of $A \times A$, the simplicity assumption on $A$ implies that the image of $\pi_{1}(F) \otimes_{\mathbb{Z}} \mathbb{Q}$ in $\pi_{1}(A \times A) \otimes_{\mathbb{Z}} \mathbb{Q}$ agrees with the image of $\pi_{1}(A') \otimes_{\mathbb{Z}} \mathbb{Q}$. For a fixed abelian variety $B$, different connected abelian subvarieties of $B$ induce different subgroups of $\pi_{1}(B) \otimes_{\mathbb{Z}} \mathbb{Q}$, so it follows that there are at most $C$ abelian subvarieties of $A \times A$ that intersect $Y \times X$ atypically, where $C$ is a uniform constant. By following again the proof of \autoref{abthmsimplecase}(1) one observes that this $C$ corresponds exactly to the finitely many $\lambda \in \operatorname{End}(A)$ referred to in \autoref{mainthmB} (1), and hence proves the first part of that statement, as desired. \hfill \qedsymbol{} 

\medskip

\noindent \Cref{uniformityinabeliansetting} is deduced from \Cref{rhounigeoZP} in the same way. \hfill \qedsymbol{}

\section{Intersections of subvarieties outside the B\'ezout range}\label{finalsec}

In this final section we use o-minimality and ideas from Diophantine geometry to prove \autoref{densityofnonintthm}. See also \cite{zbMATH05292756, zbMATH06022187, zbMATH07516542} for results related to this circle of ideas. Once again we begin by rephrasing the question as a question about $Y \times X \subset A \times A$: it is equivalent to show that the set
\[ \{ r \leq n : Y \times X \cap \Gamma_{[n]} = \varnothing \} \]
has asymptotic $1 - \delta$. Write $\pi : \mathbb{C}^{g} \to A(\mathbb{C})$ for the uniformization and $\pi_2 : \mathbb{C}^{2g} \to (A \times A)(\mathbb{C})$ for the product map. Fix a fundamental parallelogram $\mathcal{F}$ for the covering action of $\pi$, and let $\widetilde{Y}$ (resp. $\widetilde{X}$) denote the intersection of $\pi^{-1}(Y)$ (resp. $\pi^{-1}(X)$) with $\mathcal{F}$. Letting $$e_{1}, \hdots, e_{2g}$$ be the vectors in $\mathbb{C}^{g} = \mathbb{R}^{2g}$, emmanating from the origin, which correspond to the sides of $\mathcal{F}$, we have that 
\begin{equation}
\label{Fdesc}
\mathcal{F} = \left\{ \mathbf{x} \in \mathbb{C}^{g} : \mathbf{x} = \sum_{i=1}^{2g} a_{i} e_{i}; \hspace{1em} a_{i} \in [0,1) \textrm{ for all }1 \leq i \leq 2g \right\}.
\end{equation}
We use the theory of definable o-minimal structures from \cite{MR1633348}, and importantly that the restriction of the uniformization map $\pi \times \pi$ to $\mathcal{F} \times \mathcal{F}$ is definable in the o-minimal structure $\mathbb{R}_{\textrm{an}}$. From now on, when we say \emph{definable} we mean definable it the o-minimal structure $\mathbb{R}_{\textrm{an}}$.

We can also consider the intersection $\mathcal{I}_{n}$ of $\pi^{-1}_{2}(\Gamma_{[n]})$ with $\mathcal{F} \times \mathcal{F}$. A priori the locus $\pi^{-1}_{2}(\Gamma_{[n]})$ is the union of infinitely many affine-linear subspaces of $\mathbb{C}^{g} \times \mathbb{C}^{g}$, but only finitely many of these intersect $\mathcal{F} \times \mathcal{F}$. Write $L_{n} \subset \mathbb{C}^{g} \times \mathbb{C}^{g}$ for the graph $(z, nz)$ of multiplication-by-$n$; we will use the same notation even when $n$ lies in $\R^\times$ or even $ \mathbb{C}^{\times}$. 

\begin{defn}
 A component of $\mathcal{I}_{n}$ is called a $\Gamma_{[n]}$-\emph{segment}.
\end{defn}

\begin{lem}
\label{Toplyinn}
The number of $\Gamma_{[n]}$-segments is $O(\operatorname{poly}(n))$. Each segement is the intersection with $\mathcal{F} \times \mathcal{F}$ of an affine subspace of $\mathbb{C}^{g} \times \mathbb{C}^{g}$ of the form $L_{n, \mathbf{a}} := L_{n} + (0, \mathbf{a})$, where $\mathbf{a} \in \mathbb{Z}^{2g}$ if we work in the coordinates determined by the basis $e_{1}, \hdots, e_{2g}$. Write
\[ \mathcal{S} = \mathbb{R}^{\times} \times \operatorname{span}_{\mathbb{R}} \{ e_{1}, \hdots, e_{2g} \} \]
for the parameter space of $2g$-dimensional affine subspaces of the $4g$-dimensional real vector space $\mathbb{C}^{2g}$ which are of the form $L_{n} + (0, \mathbf{a})$ of some $L_{n}$, with $n \in \mathbb{R}^{\times}$ and $a \in \mathbb{R}^{2g}$. Then a point of $\mathcal{S}$ corresponding to a segment of $\Gamma_{[n]}$ has height $O(\operatorname{poly}(n))$ (using the na\"ive height on points of $\mathbb{Q}^{2g+1} \subset \mathbb{R}^{2g+1}$). 
\end{lem}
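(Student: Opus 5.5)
We work throughout in the real coordinates given by the basis $e_{1}, \hdots, e_{2g}$, so that $\mathcal{F} = [0,1)^{2g}$, $\Lambda = \mathbb{Z}^{2g}$, and $\pi_{2}^{-1}(\Gamma_{[n]})$ becomes the explicit set
\[ \pi_{2}^{-1}(\Gamma_{[n]}) = \{ (z, w) \in \mathbb{R}^{2g} \times \mathbb{R}^{2g} : w - nz \in \mathbb{Z}^{2g} \} + (\mathbb{Z}^{2g} \times \{0\}). \]
Indeed $(z,w)$ maps into $\Gamma_{[n]}$ exactly when $\pi(w) = n\pi(z)$, i.e.\ $w - nz \in \Lambda$, and translating $z$ by a lattice vector $\mathbf{b}$ only changes $w - nz$ by $n\mathbf{b} \in \mathbb{Z}^{2g}$. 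So the second summand is redundant, and we get
\[ \pi_{2}^{-1}(\Gamma_{[n]}) = \bigcup_{\mathbf{a} \in \mathbb{Z}^{2g}} \left( L_{n} + (0, \mathbf{a}) \right). \]
This gives the claimed shape of each segment. We intersect with $\mathcal{F} \times \mathcal{F}$. Each $L_{n,\mathbf{a}} \cap (\mathcal{F} \times \mathcal{F})$ is convex, being the intersection of an affine subspace with a box, hence connected. Distinct $\mathbf{a}$ give disjoint parallel affine subspaces, and a component of $\mathcal{I}_{n}$ cannot meet two of them: the union is a locally finite union of closed disjoint parallel affine pieces relative to the box. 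Hence the segments are exactly the nonempty sets $L_{n,\mathbf{a}} \cap (\mathcal{F} \times \mathcal{F})$.

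Next we bound which $\mathbf{a}$ occur. If $(z, nz + \mathbf{a}) \in \mathcal{F} \times \mathcal{F}$, then $z \in [0,1)^{2g}$ and $nz + \mathbf{a} \in [0,1)^{2g}$. Coordinatewise this gives
\[ a_{j} \in (-n, 1] \quad \text{for } n > 0, \]
with the symmetric range for $n < 0$. So each coordinate $a_{j}$ is an integer with $|a_{j}| \leq |n| + 1$. The number of segments is therefore at most $(2|n|+3)^{2g} = O(\operatorname{poly}(n))$.

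Finally, the segment $L_{n,\mathbf{a}}$ corresponds to the point $(n, \mathbf{a}) \in \mathcal{S}$. This point lies in $\mathbb{Z}^{2g+1}$, and its naïve height is
\[ \max(|n|, |a_{j}|) \leq |n| + 1 = O(n). \]
This is polynomial in $n$.

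The only step requiring any care is the identification of components with the individual affine pieces: one must check that the pieces for different $\mathbf{a}$ do not accumulate or touch inside $\mathcal{F} \times \mathcal{F}$. This holds because they are translates of one another by vectors in $\{0\} \times \mathbb{Z}^{2g}$, so they are separated by a uniform positive distance in the $w$-coordinate. Everything else is bookkeeping with the explicit description of $\mathcal{F}$ in (\ref{Fdesc}).
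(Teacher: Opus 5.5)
Your proof is correct and follows essentially the same route as the paper: you identify $\pi_2^{-1}(\Gamma_{[n]})$ with the union of the integral translates $L_n+(0,\mathbf{a})$, and then deduce from $n\mathbf{x}+\mathbf{a}\in[0,1)^{2g}$ with $\mathbf{x}\in[0,1)^{2g}$ that $|a_j|\le |n|$ (up to an additive constant), which bounds both the number of segments and the height of $(n,\mathbf{a})$. You are slightly more careful than the paper in checking that the connected components of $\mathcal{I}_n$ are exactly the nonempty pieces $L_{n,\mathbf{a}}\cap(\mathcal{F}\times\mathcal{F})$; the only slip is that for $n>0$ the range is $a_j\in(-n,1)$, so $a_j\le 0$, rather than $(-n,1]$, and this does not affect your bounds.
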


\begin{proof}
That each segment can be written as an intersection with $L_{n} + (0, \mathbf{a})$ with $\mathbf{a}$ integral is from the definitions. In particular, each component of $\pi^{-1}_{2}(\Gamma_{[n]})$ is of the form $L_{n} + (\lambda_{1}, \lambda_{2})$ for $\lambda_{i} \in \Lambda$, where $A = \mathbb{C}^{g} / \Lambda$. In the chosen basis, each $(\lambda_{1}, \lambda_{2})$ is of the form $(\mathbf{a}_{1}, \mathbf{a}_{2})$ where $(\mathbf{a}_{1}, \mathbf{a}_{2}) \in \mathbb{Z}^{4g}$. 
 But the subspace $L_{n} + (\mathbf{a}_{1}, \mathbf{a}_{2})$ agrees with $L_{n} + (0, \mathbf{a}_{2} + n \mathbf{a}_{1})$.

The number of translates of $L_{n}$ that intersect $\mathcal{F} \times \mathcal{F}$ is then the number of integral vectors $\mathbf{a}$ for which there exists $\mathbf{x} \in [0,1)^{2g}$ such that $n \mathbf{x} + \mathbf{a} \in [0,1)^{2g}$. This condition ensures that the entries of $\mathbf{a}$ are non-positive with magnitude at most $n$, hence there are at most $n^{2g}$ possibilities. The same analysis shows that the height of each point of $\mathcal{S}$ corresponding to a segment is similarly bounded by $n$.
\end{proof}

Let us write 
\[ f : T \to S = \mathbb{G}_{m} \times \mathbb{A}^{g} \]
for the algebraic family of affine linear subspaces of $\mathbb{A}^{2g}_\C$ whose fibre above $(n, \mathbf{a}) \in \mathbb{C}^{\times} \times \mathbb{C}^{g} = \mathbb{G}_{m}(\mathbb{C}) \times \mathbb{A}^{g}(\mathbb{C})$ is $L_{n} + (0, \mathbf{a})$. Then $\mathcal{S}$ may be identified with $\mathbb{G}_{m}(\mathbb{R}) \times \mathbb{A}^{g}(\mathbb{C})$. Let $\kappa : T \to \mathbb{A}^{2g}$ be the natural projection which sends each complex fibre of $f$ to the corresponding affine linear subspace of $\mathbb{C}^{2g}$; this map restricts to a closed embedding on fibres of $f$. Let $\mathcal{T} \subset T(\mathbb{C})$ be the intersection $\kappa^{-1}(\widetilde{Y} \times \widetilde{X}) \cap f^{-1}(\mathcal{S})$. 

One can regard $\mathcal{T}$ as a total space for the families of intersections $L_{n,\mathbf{a}} \cap \widetilde{Y} \times \widetilde{X}$ as $(n, \mathbf{a})$ varies in $\mathcal{S}$. It follows from definable cell decomposition \cite[Ch. 3]{MR1633348} that there exists a finite definable cover $\mathcal{T} = \bigcup_{i=1}^{\ell} \mathcal{T}_{i}$ such that the non-empty fibres of the families $\mathcal{T}_{i} \to \mathcal{S}$ are irreducible complex analytic components of $L_{n,\mathbf{a}} \cap \widetilde{Y} \times \widetilde{X}$, and every such component occurs in the fibre of some family $\mathcal{T}_{i} \to \mathcal{S}$. After further refining the $\mathcal{T}_{i}$, we can assume that all of the fibres of each map $\mathcal{T}_{i} \to \mathcal{S}$ have the same dimension. After replacing $\mathcal{S}$ with a possibly disconnected base which is the disjoint union of images of the maps $\mathcal{T}_{i} \to \mathcal{S}$, we can assume we have two such families:
\begin{displaymath}
f_{> 0} : \mathcal{T}_{> 0} \to \mathcal{S}_{> 0}, \text{  and  } f_{0} : \mathcal{T}_{0} \to \mathcal{S}_{0},
\end{displaymath}
which parameterize those components with dimension $> 0$ and dimension $0$, respectively. Note that we have maps 
\begin{displaymath}
\mu_{> 0} : \mathcal{S}_{> 0} \to \mathcal{S}  \text{  and  } \mu_{0} : \mathcal{S}_{0} \to \mathcal{S},
\end{displaymath}
so each point in $\mathcal{S}_{> 0}$ (resp. $\mathcal{S}_{0}$) is associated to some $(n, \mathbf{a})$. Note also that each fibre of $f_{0}$ is just a point of $\widetilde{Y} \times \widetilde{X}$, and $f_{0}$ is invertible on its image.

Let $\rho_{i}$ with $i = 1, \hdots, \ell$ be the finitely many families from \autoref{geoZPAA}, and throw away those $\rho_{i}$ for which the fibres are zero dimensional. We write $\mathcal{R}_{0} \subset \mathcal{S}_{0}$ for the subset with the property
\begin{equation}
\label{R0constr}
\mathcal{R}_{0} = \left\{ s \in \mathcal{S}_{0} : (n, \mathbf{a}) = \mu_{0}(s), \hspace{1em} \begin{array}{c}\dim [ L_{n,\mathbf{a}} \cap \mathcal{F} \times \mathcal{F} \cap \pi^{-1}_{2}(\rho^{-1}_{i}(b)) ] \leq 0, \\ \textrm{for all }1 \leq i \leq \ell \textrm{ and }b \in B_{i} \end{array} \right\}. 
\end{equation}
Note that $\mathcal{R}_{0}$ is a definable subset. For any subset $D$ of a real affine space $\mathbb{R}^{m}$ and any real number $T\geq 1$, we write $D(\mathbb{Q}, T)$ for the subset $D \cap \mathbb{Q}^{m}$ with height at most $T$, where the height is the naive height on rational points in $\mathbb{R}^{m}$. We also refer to $\mathcal{S}_{0}(\mathbb{Q}, T)$ and $\mathcal{R}_{0}(\mathbb{Q}, T)$ by regarding each such set inside (finitely many) real affine spaces, which one can do by mapping each component of $\mathcal{S}_{0}$ (resp. $\mathcal{R}_{0}$) to $\mathcal{S}$ and using the affine space associated to $\mathcal{S}$.  
 
\begin{lem}
\label{fewlineswhichintlem}
For any $\ep > 0$, the set $\mathcal{R}_{0}(\mathbb{Q}, T)$ contains $O(T^{\ep})$ points $r_{0}$ such that
\begin{itemize}
\item[(1)] the point $\mu_{0}(r_{0}) = (n, \mathbf{a})$ defines a linear subspace $L_{n, \mathbf{a}}$ with $n \in \mathbb{Z}_{\leq T}, \mathbf{a} \in \mathbb{Z}^{2g}$; and
\item[(2)] the fibre $f^{-1}_{0}(r_{0}) \subset \widetilde{Y} \times \widetilde{X}$ does not project to a torsion point $A \times A$.
\end{itemize}
\end{lem}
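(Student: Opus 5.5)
The plan is the Pila--Zannier strategy: apply the Pila--Wilkie counting theorem to the definable set $\mathcal{R}_{0}$, and show that every rational point of $\mathcal{R}_{0}$ satisfying (1) and (2) lies outside the algebraic part $\mathcal{R}_{0}^{\mathrm{alg}}$.

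\textbf{Step 1 (counting).} Embed each component of $\mathcal{R}_{0}$ into the real affine space underlying $\mathcal{S}$ via $\mu_{0}$, as in the definition of $\mathcal{R}_{0}(\mathbb{Q},T)$. The set $\mathcal{R}_{0}$ is definable in $\mathbb{R}_{\mathrm{an}}$. The Pila--Wilkie theorem then gives, for every $\ep>0$, a bound $|(\mathcal{R}_{0}\setminus\mathcal{R}_{0}^{\mathrm{alg}})(\mathbb{Q},T)| = O(T^{\ep})$, where $\mathcal{R}_{0}^{\mathrm{alg}}$ is the union of all connected positive-dimensional semialgebraic subsets of $\mathcal{R}_{0}$. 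Since $f_{0}$ is injective, each point $r_{0}$ corresponds to at most one intersection point. So it suffices to prove the following claim: if $r_{0}\in\mathcal{R}_{0}^{\mathrm{alg}}$ satisfies (1), then $f_{0}^{-1}(r_{0})$ projects to a torsion point of $A\times A$.

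\textbf{Step 2 (analysing the algebraic part).} Let $r_{0}$ satisfy (1) and lie on a connected semialgebraic curve $\beta \subset \mathcal{R}_{0}$, with $\mu_{0}\circ\beta$ non-constant. Along $\beta$ there is a definable path of intersection points $p(s)\in \widetilde{Y}\times\widetilde{X}\cap L_{\mu_{0}(s)}$. The subspaces $L_{n(s),\mathbf{a}(s)}$ sweep out a real-semialgebraic family of affine subspaces, so the union $\bigcup_{s} L_{\mu_{0}(s)}$ contains a real-algebraic set $Z$ with $p(s)\in Z$. I would use the complex structure on $\widetilde{Y}\times\widetilde{X}$ together with Ax--Schanuel for abelian varieties, in the form of the geometric Zilber--Pink statement \Cref{geoZPAA}, to conclude the following: the complex-analytic germ of $\pi_{2}^{-1}(Y\times X)$ that contains the path $p(s)$ meets a smallest complex affine space containing $Z$ atypically. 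This forces $p(r_{0})$ to lie in a positive-dimensional atypical intersection with a weakly special subvariety, that is, in a fibre $\rho_{i}^{-1}(b)$ with $\dim[L_{n,\mathbf{a}}\cap \pi_{2}^{-1}\rho_{i}^{-1}(b)]>0$. That contradicts membership in $\mathcal{R}_{0}$ as defined in \eqref{R0constr}.

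There is an exception to this argument. If $n(s)$ is constant along $\beta$ and only $\mathbf{a}(s)$ varies, the relevant weakly special subvarieties are translates of $\Gamma_{n}$. The intersection point can then move only when $Y\times X$ meets a positive-dimensional family of translates of $\Gamma_{n}$. I would handle this case by using the simplicity of $A$ together with \Cref{intersectionlamb}: in this case the algebraic part is contained in at most finitely many segments, and these contribute boundedly many points. If instead $\beta$ is constant under $\mu_{0}$, then $r_{0}$ is isolated in its fibre and is not an obstruction. Finally, if $r_{0}$ lies on the algebraic part only through a zero-dimensional semialgebraic block, that block does not count as part of $\mathcal{R}_{0}^{\mathrm{alg}}$.

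\textbf{Step 3 (torsion).} I then need to explain why torsion points are the only remaining exception. A point of $Y\times X$ that is torsion lies in $\Gamma_{n}$ for an entire arithmetic progression of values of $n$. Its lifts therefore lie on a semialgebraic curve in $\mathcal{S}$ that interpolates between $(n,\mathbf{a})$ and nearby real parameters $(n', \mathbf{a}')$ through the same lifted point, since a rational point $(y,x)$ lies on $L_{n',\mathbf{a}'}$ whenever $x = n'y+\mathbf{a}'$, which is linear in $(n',\mathbf{a}')$. This shows that condition (2) is exactly what removes the algebraic part coming from fixed points. Conversely, I would show that if the line $\{(n',\mathbf{a}') : p \in L_{n',\mathbf{a}'}\}$ for a fixed $p$ contains many rational points, then $p$ has rational coordinates in the basis $e_{1},\ldots,e_{2g}$, and hence $p$ is torsion.

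\textbf{Main obstacle.} I expect the hardest part to be Step 2, namely showing that a semialgebraic curve in $\mathcal{R}_{0}$ along which the intersection point genuinely moves produces an atypical intersection that contradicts \eqref{R0constr}. This requires passing from real semialgebraic families to complex weakly special subvarieties. My first approach would be to complexify the curve $\beta$ (a Zariski closure in $\mathcal{S}_{\mathbb{C}} = \mathbb{G}_{m}\times\mathbb{A}^{g}$) and apply the Ax--Schanuel theorem to the union of the subspaces $L_{n,\mathbf{a}}$ over the complexified curve.
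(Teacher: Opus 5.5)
Your Step 1 reduction fails. You reduce to the claim that every point of $\mathcal{R}_{0}^{\mathrm{alg}}$ satisfying (1) has torsion image, but that claim is false, so the plain Pila--Wilkie bound on $(\mathcal{R}_{0}\setminus\mathcal{R}_{0}^{\mathrm{alg}})(\mathbb{Q},T)$ gives nothing.

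Take $r_{0}\in\mathcal{R}_{0}$ with $\mu_{0}(r_{0})=(n,\mathbf{a})$ and intersection point $p=(y,x)$. Your own linearity remark in Step 3 does not use rationality of $p$: it gives $p\in L_{n',x-n'y}$ for every $n'\in\mathbb{R}^{\times}$. By upper semicontinuity of fibre dimension, $p$ stays an isolated point of $L_{n',x-n'y}\cap\widetilde{Y}\times\widetilde{X}$ for $n'$ near $n$. The condition in \eqref{R0constr} depends only on $n'$ and fails for only finitely many values of it: $L_{n'}$ meets the lift of a translate of $\Gamma_{\beta,\beta'}$ in positive dimension only when $n'$ is an eigenvalue of the analytic representation of $\beta^{-1}\beta'$. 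So $r_{0}$ in general lies on a semialgebraic arc in $\mathcal{R}_{0}$ along which the intersection point is constantly $p$. Hence $r_{0}\in\mathcal{R}_{0}^{\mathrm{alg}}$ whether or not $p$ is torsion.

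You are right that such an arc carries only one rational point when $p$ is non-torsion. But the basic counting theorem says nothing about how many such arcs contain a rational point of height $\le T$. The missing ingredient is the block form \cite[Thm. 5.3]{MR2511202}. It covers \emph{all} of $\mathcal{R}_{0}(\mathbb{Q},T)$, algebraic part included, by $O(T^{\delta})$ connected semialgebraic blocks. The paper argues by contradiction:
\begin{itemize}
\item[(i)] If $|\mathcal{P}_{0}(T_{i})|\ge cT_{i}^{\ep}$ along a sequence $T_{i}$, choose $\delta<\ep$ and pigeonhole to get a block containing two distinct points $r_{0},r_{0}'\in\mathcal{P}_{0}$.
\item[(ii)] \Cref{P0injlem} gives $p\neq p'$; this is where (2) enters, via exactly your rationality argument.
\item[(iii)] Along a semialgebraic curve $C$ in that block joining $r_{0}$ and $r_{0}'$, the intersection point therefore genuinely moves.
\end{itemize}
This disposes of the fixed-point arcs in one stroke, and your separate constant-$n$ case becomes unnecessary.

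Step 2 also has a gap. Apply Ax--Schanuel to the complexified family $\mathcal{W}\to C_{\mathbb{C}}$, whose image $W$ has dimension $g+1$. It only tells you that the moving component $V$ lies in the lift $\widetilde{F}$ of a fibre $F$ of some $\rho_{i}$. Since $V$ is transverse to the subspaces $L_{n,\mathbf{a}}$, this does not give $\dim(L_{n,\mathbf{a}}\cap\widetilde{F})>0$, so your ``that is'' does not produce a contradiction with \eqref{R0constr}. Atypicality alone cannot help either: since $\dim X+\dim Y<\dim A$, every nonempty intersection of $Y\times X$ with a $g$-dimensional weakly special is atypical.

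The paper uses \eqref{R0constr} in the opposite direction. Each $L_{n,\mathbf{a}}$ over $C$ meets $\widetilde{F}$ in a finite set. Hence the preimage of $\widetilde{F}$ in $\mathcal{W}$ has a one-dimensional algebraic component whose image contains $V$, so $V$ is algebraic. Ax--Lindemann then makes $\overline{\pi_{2}(V)}^{\mathrm{Zar}}$ a positive-dimensional weakly special subvariety contained in $Y\times X$. This is impossible because $A$ is simple and $\dim(Y\times X)<\dim A$.
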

\noindent In what follows we write $\mathcal{P}_{0}(T) \subset \mathcal{R}_{0}(\mathbb{Q}, T)$ for the subset of points satisfying (1) and (2), and set $\mathcal{P}_{0} = \bigcup_{T} \mathcal{P}_{0}(T)$. 

\begin{lem}
\label{P0injlem}
The map $\mathcal{P}_{0} \xrightarrow{f^{-1}_{0}} \mathcal{T}_{0} \to \mathcal{T} \xrightarrow{\kappa} \mathbb{C}^{g} \times \mathbb{C}^{g}$ is injective.
\end{lem}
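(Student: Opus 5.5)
The plan is to argue by contradiction. Suppose two points $r, r' \in \mathcal{P}_{0}$ have the same image $p = (\widetilde{y}, \widetilde{x}) \in \widetilde{Y} \times \widetilde{X} \subset \mathbb{C}^{g} \times \mathbb{C}^{g}$. Write $\mu_{0}(r) = (n, \mathbf{a})$ and $\mu_{0}(r') = (n', \mathbf{a}')$. By condition (1) of \autoref{fewlineswhichintlem}, $n, n'$ are nonzero integers and $\mathbf{a}, \mathbf{a}' \in \mathbb{Z}^{2g}$ in the coordinates given by $e_{1}, \hdots, e_{2g}$. By construction of $\mathcal{T}$ and $\kappa$, the point $p$ lies on both $L_{n,\mathbf{a}}$ and $L_{n',\mathbf{a}'}$. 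In coordinates this gives
\[ \widetilde{x} = n \widetilde{y} + \mathbf{a} = n' \widetilde{y} + \mathbf{a}' . \]
I then split into two cases according to whether $n = n'$.

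\emph{Case $n \neq n'$.} Subtracting the two expressions gives $(n - n')\widetilde{y} = \mathbf{a}' - \mathbf{a} \in \mathbb{Z}^{2g}$. Hence $\widetilde{y}$ has rational coordinates with respect to the basis $e_{1}, \hdots, e_{2g}$ of $\Lambda \otimes \mathbb{R}$, so $\pi(\widetilde{y})$ is a torsion point of $A$. Then $\widetilde{x} = n\widetilde{y} + \mathbf{a}$ also has rational coordinates, so $\pi(\widetilde{x})$ is torsion too. Thus $\pi_{2}(p)$ is a torsion point of $A \times A$. This contradicts condition (2) in the definition of $\mathcal{P}_{0}$, since $f_{0}^{-1}(r) = \{p\}$.

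\emph{Case $n = n'$.} The same displayed equation forces $\mathbf{a} = \mathbf{a}'$, so $\mu_{0}(r) = \mu_{0}(r')$ and both points parametrize zero-dimensional components of the same intersection $L_{n,\mathbf{a}} \cap \widetilde{Y} \times \widetilde{X}$. Since each fibre of $f_{0}$ is a single point, each such component is the point $p$ itself, so $r$ and $r'$ parametrize the same component. To conclude $r = r'$, I need that $\mathcal{S}_{0}$ parametrizes each component only once. I would secure this when the cover $\mathcal{T} = \bigcup_{i} \mathcal{T}_{i}$ is chosen: refine it by definable cell decomposition into a disjoint definable partition. After this refinement, a point of $\mathcal{T}_{0}$ lies over a unique point of $\mathcal{S}_{0}$. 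Since $f_{0}$ is invertible on its image, $r = r'$.

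The first case is the real content, and it is a short linear-algebra observation: two distinct lines of the family can only meet at rational points, which map to torsion points. The main obstacle is the bookkeeping in the second case, namely ensuring that $\mathcal{S}_{0}$ does not contain duplicate parameters for the same component. I expect this to be handled by the disjointness refinement above, which costs nothing for the counting in \autoref{fewlineswhichintlem}.
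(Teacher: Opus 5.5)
Your proposal is correct and follows essentially the paper's proof. The paper uses the same case split: equal parameters; $n=n'$ with $\mathbf{a}\neq\mathbf{a}'$, where the subspaces are parallel and disjoint; and $n\neq n'$, where $\pi_{2}(p)\in\Gamma_{[n]}\cap\Gamma_{[n']}$ is torsion, which is exactly your rationality computation. It simply asserts ``by construction'' the once-only parametrization of zero-dimensional components that you secure by disjointifying the cover.

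One small caution on that step: disjointify only among the families with zero-dimensional fibres (e.g.\ by successive set differences), not by an arbitrary cell decomposition. A cell decomposition could cut positive-dimensional components into pieces with zero-dimensional fibres. Since a zero-dimensional component is an isolated point of the intersection, the restricted disjointification costs nothing, as you say.
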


\begin{proof}
Pick two distinct points $r_{0}, r'_{0} \in \mathcal{P}_{0}$. Set $(n, \mathbf{a}) = \mu_{0}(r_{0}), (n', \mathbf{a}') = \mu_{0}(r'_{0})$, and let $p, p' \in \widetilde{Y} \times \widetilde{X}$ be their images under the map in the statement. Suppose first that $(n, \mathbf{a}) = (n', \mathbf{a}')$. Then if $p = p'$ we must have $r_{0} = r'_{0}$ since each $0$-dimensional component of $L_{n,\mathbf{a}} \cap \widetilde{Y} \times \widetilde{X}$ appears exactly once as a fibre of the family $f_{0}$ by construction. Knowing now that $(n, \mathbf{a}) \neq (n', \mathbf{a}')$, we may observe that if $n = n'$ then $L_{n,\mathbf{a}} \cap L_{n',\mathbf{a}'} = \varnothing$ since then the affine linear subspaces are parallel and do not intersect unless $\mathbf{a} = \mathbf{a}'$. Having reduced to the case where $n \neq n'$, we deduce that if $p = p'$ then $\pi_{2}(p)$ lies inside both $\Gamma_{[n]}$ and $\Gamma_{[n']}$ and is therefore torsion, violating condition (2). Hence $p \neq p'$.
\end{proof}

The proof of uses various results about \emph{functional transcendence} for $A\times A$, due to Ax \cite{zbMATH03407823}. For a general overview of the recent developments and applications we refer also to \cite{2025arXiv250203071B}.

\begin{proof}[Proof of \Cref{fewlineswhichintlem}]
Suppose otherwise. Then there exist some $c, \ep > 0$ and an infinite sequence $\{ T_{i} \}_{i \geq 1}$ such that the number of points of $\mathcal{R}_{0}(\mathbb{Q}, T_{i})$ satisfying (1) and (2) is $\geq c T_{i}^{\ep}$ for all $i \geq 1$. By \cite[Thm. 5.3]{MR2511202} (a generalization of the Pila-Wilkie theorem from \cite{pilawilkie}) one has that the subset $\mathcal{R}_{0}(\mathbb{Q}, T)$ of the definable set $\mathcal{R}_{0}$ lies inside $O(T^{\delta})$ semialgebraic blocks for any $\delta > 0$ (for the notion of semialgebraic block see \cite[Def. 3.2]{MR2511202}). Using \autoref{Toplyinn} and choosing $\delta < \ep$ we can assume that many of these blocks contain at least two distinct points $r_{0}, r'_{0} \in \mathcal{P}_{0}$. Let $B$ be such a block, and let $p, p' \in \widetilde{Y} \times \widetilde{X}$ be their images in $\mathcal{F} \times \mathcal{F}$. By \autoref{P0injlem} we have $p \neq p'$.

Now $B$ contains a real semialgebraic $C$ passing through both $r_{0}$ and $r'_{0}$. Identify both $B$ and $C$ with their images in $\mathcal{S}$. Recall that $\mathcal{S}$ is a real-algebraic locus in the complex points $\mathbb{C}^{\times} \times \mathbb{C}^{g}$ of the complex algebraic variety $S = \mathbb{G}_{m} \times \mathbb{A}^{g}$, and we have an algebraic map $f : T \to S$. Write $f_{\mathbb{R}} : \textrm{Res}_{\mathbb{C}/\mathbb{R}} T \to \textrm{Res}_{\mathbb{C}/\mathbb{R}} S$ for the Weil restriction, and $f_{C} : T_{C} \to C$ for the base-change of this Weil restriction to $C$. Scalar extending, we obtain a complex algebraic family $f_{C,\mathbb{C}} : T_{C,\mathbb{C}} \to C_{\mathbb{C}}$. Using the adjunction between Weil restriction and base-change we have a map
\[ \nu : T \to (\textrm{Res}_{\mathbb{C}/\mathbb{R}} T)_{\mathbb{C}} \]
and so we can consider the family $h : \nu^{-1}(T_{C,\mathbb{C}}) =: \mathcal{W} \to C_{\mathbb{C}}$. By construction this is a complex algebraic family of affine-linear subspaces of $\mathbb{A}^{2g}$ over a complex curve $C_{\mathbb{C}}$ which includes those parameterized by $C$. 

Write $W$ for the image of $\mathcal{W}$ in $\mathbb{A}^{2g}$. This is a constructible complex algebraic set of dimension $g+1$ whose intersection with $\widetilde{Y} \times \widetilde{X}$ contains a positive-dimensional complex-analytic component $V$ containing the points $p$ and $p'$ (such a component must exist because $W \cap \widetilde{Y} \times \widetilde{X}$ contains the image of $f^{-1}_{0}(C)$, which is a connected definable locus containing two distinct points $p$ and $p'$). Since, by assumption, $\dim X + \dim Y < \dim A$, we have that $$\dim (\widetilde{Y} \times \widetilde{X}) - \dim V < 2 \dim A - \dim W.$$ Therefore the Ax-Schanuel theorem \cite{zbMATH03407823} tells us that this locus $V$ projects under $\pi \times \pi$ into a positive-dimensional weakly special intersection strictly contained in $Y \times X$, hence necessarily the fibre of some $\rho_{i}$. Let $F$ be this fibre, then $V$ also lies inside some affine linear sublocus $\widetilde{F} \subset \mathbb{C}^{g} \times \mathbb{C}^{g}$ whose intersection with $\mathcal{F} \times \mathcal{F}$ projects into $F$. From the definition in (\ref{R0constr}), this affine linear locus $\widetilde{F}$ intersects each $L_{n, \mathbf{a}}$ lying over the curve $C$ in a locus of dimension at most $0$. Moreover, the intersection $W \cap \widetilde{F}$ is non-empty (since it contains $V$). Consider the inverse image $\widetilde{E}$ of $\widetilde{F}$ inside $\mathcal{W}$. Then the locus $\widetilde{E}$ is algebraic, dominant onto $C_{\mathbb{C}}$, and intersects each fibre of $\mathcal{W} \to C_{\mathbb{C}}$ above $C$ in a locus of zero dimension; in particular, some component $\widetilde{E}^{\circ} \subset \widetilde{E}$ which whose image in $\mathbb{A}^{2g}$ contains $V$ has dimension one. Then the one-dimensional locus $V$ is contained in the image of the one-dimensional algebraic locus $\widetilde{E}^{\circ}$, hence $V$ is algebraic.

By the Ax-Lindemann theorem (a special case of the Ax-Schanuel theorem of \cite{zbMATH03407823}) since $V \subset \widetilde{Y} \times \widetilde{X}$ is algebraic, the image $\overline{\pi(V)}^{\textrm{Zar}}$ is weakly special in $A \times A$ and contained in $Y \times X$. On the other hand since $\dim (Y \times X) < \dim A$ and $V$ has positive dimension, this contradicts the simplicity of $A$ since when $A$ is simple any such weakly special must have dimension $\geq A$ (cf. the proof of \autoref{hodgeab}). We conclude that our original assumption was false, and so for any $\ep > 0$ there are $O(T^{\ep})$ points in $\mathcal{P}_{0}(T)$. 
\end{proof}

Now using \autoref{P0injlem} and \autoref{Toplyinn} the statement of \autoref{fewlineswhichintlem} implies the following:
\begin{quote}
The number of isolated non-torsion points of intersection between $\mathcal{I}_{\ell}$ for $\ell \leq n$ and $\widetilde{Y} \times \widetilde{X}$ is $O(n^{\ep})$ for any $\ep > 0$.
\end{quote}
Indeed, each such point of intersection gives rise to a point $r_{0} \in \mathcal{P}_{0}(\textrm{poly}(n))$, unique by \autoref{P0injlem}, and then \autoref{fewlineswhichintlem} imply there are at most $O(\textrm{poly}(n)^{\ep})$ such points, so the result follows after adjusting $\ep$. Moreover if a point of intersection between $\mathcal{I}_{\ell}$ and $\widetilde{Y} \times \widetilde{X}$ is non-isolated (i.e., there is a positive-dimensional component) then necessarily $\Gamma_{[\ell]}$ intersects $Y \times X$ in a positive-dimensional locus and agrees with a fibre of one of the $\rho_{i}$; this happens only for finitely many $\ell$. It then follows that 
\begin{quote}
The number of $\ell \leq n$ such that $\mathcal{I}_{\ell} \cap \widetilde{Y} \times \widetilde{X}$ contains a non-torsion intersection point is $O(n^{\ep})$ for any $\ep > 0$. In particular, such $\ell \in \N$ have asymptotic density zero.
\end{quote}
Now recall from \S\ref{introoutsidesec} that we have a natural density $1 - \delta$ set of integers in $\ell \in \N$ such that $\Gamma_{[\ell]}$ does not intersect $Y \times X$ in torsion points. It then follows from the above reasoning that for a density $1 - \delta$ collection of integers $\ell \in \N$ the loci $\mathcal{I}_{\ell}$ do not intersect $\widetilde{Y} \times \widetilde{X}$ at all; i.e., we have $Y \times X \cap \Gamma_{[\ell]} = \varnothing$ for a density $1 - \delta$ set of integers.

This concludes the proof of \Cref{densityofnonintthm}. \hfill \qedsymbol{} 

\begin{rem}
The statement of \Cref{densityofnonintthm} can be linked to the Mordell-Lang conjecture. Let $A$ be a simple abelian variety, $X$ an irreducible Zariski closed subvariety strictly contained in $A$, and $Y=\{P\}$ a complex non-torsion point of $A$. Then for all but finitely many $\lambda \in  \operatorname{End} (A)$, $X\cap \lambda Y=\emptyset$. To prove this, consider the finitely generated subgroup of $A(\C)$ given by 
\begin{displaymath}
\Gamma_P:=\{\lambda P, \lambda \in \operatorname{End} (A) \}
\end{displaymath}
The statement is equivalent to the claim that $X\cap \Gamma_P$ is finite. McQuillan and Faltings \cite{zbMATH00746072} \cite{zbMATH00721811} proved that the Zariski closure of $X\cap \Gamma_P$ is a finite union of weakly-special subvarieties, but since $A$ is simple, the only possibility is that the intersection is finite, as desired.

\end{rem}

\newpage

\bibliography{hodge_theory}
\bibliographystyle{abbrv}

\Addresses

\end{document}